\newtheorem{corollary}{Corollary}
\newtheorem{proposition}{Proposition}
\newtheorem{lemma}{Lemma}
\newtheorem{remark}{Remark}
\newtheorem{definition}{Definition}
\newtheorem{theorem}{Theorem}
\newtheorem{assumption}{Assumption}
\newtheorem{notation}{Notation}
\newcommand\mycom[2]{\genfrac{}{}{0pt}{}{#1}{#2}}
\begin{document}
\title{Trait-dependent branching particle systems with competition and multiple offspring}
\author{G. Berzunza\footnote{ {\sc Institut f\"ur Mathematische Stochastik, Georg-August-Universit\"at G\"ottingen. Goldschmidtstrasse 7, 37077 G\"ottingen, Germany.} E-mail: gabriel.berzunza-ojeda@uni-goettingen.de},\,\, A. Sturm \footnote{ {\sc Institut f\"ur Mathematische Stochastik, Georg-August-Universit\"at G\"ottingen. Goldschmidtstrasse 7, 37077 G\"ottingen, Germany.} E-mail: asturm@math.uni-goettingen.de} \, \, and \, \, A. Winter\footnote{ {\sc Fakult\"at f\"ur Mathematik, Universit\"at Duisburg-Essen. Thea-Leymann-Strasse 9, 45127 Essen, Germany.} E-mail: anita.winter@uni-due.de}}
\maketitle


\begin{abstract}
In this work we model the dynamics of a population that evolves as a continuous time branching process with a trait structure and ecological interactions in form of mutations and competition between individuals. We generalize existing microscopic models by allowing individuals to have multiple offspring at a reproduction event. Furthermore, we allow the reproduction law to be influenced both by the trait type of the parent as well as by the mutant trait type.

We look for tractable large population approximations. More precisely, under some natural assumption on the branching and mutation mechanisms,  we establish a superprocess limit as the solution of a well-posed martingale problem.
Standard approaches do not apply in our case due to the lack of the branching property, which is a consequence of the dependency created by the competition between individuals. 
For showing uniqueness we therefore had to develop a generalization of Dawson's Girsanov Theorem that may be of independent interest.
\end{abstract}

\bigskip

\noindent {\sc Key words and phrases}: Darwinian evolution; branching process; competition-mutation dynamics; interacting particle system; nonlinear superprocesses; adaptive dynamics; limit theorem.

\noindent {\sc Subject Classes}: 60J80; 60J68; 60K35.

\section{Introduction}

The study of interactions between organisms and their environment which influence their reproductive success and contribute to genotype and phenotype variation 
 is one of the main questions in evolutionary ecology and population genetics.  In this paper, we are interested in modelling the dynamics of populations by emphasizing the ecological interactions, namely the competition between individuals for limited resources, where each individual is characterized by a quantitative trait which remains constant during the individual's life and which is passed on to offspring unless a mutation occurs. Motivated by the work of
Bolker-Pacala \cite{Bo1997} and Dieckmann-Law \cite{DiU1999}, several models have been rigorously developed in this context. Firstly, Fournier and M\'el\'eard \cite{Four2004} considered spatial seed models. Secondly, Champagnat, et al. \cite{Cha2008}, Jourdain, et al. \cite{Jou}, M\'el\'eard and Viet Chi
\cite{MeTra2012} studied phenotypic trait structured populations when the mutation kernel behaves essentially as a Gaussian law or belongs to the domain of attraction of a stable law. Finally, M\'el\'eard and Viet Chi \cite{Me2012} considered also structured populations whose dynamics depends on the past. In these works, the population is essentially modelled by a continuous time pure birth-death process with mutation. The birth and death rates of this Markov process may depend on each individual's trait and the interactions between them. While traits are normally hereditarily transmitted from a parent to its offspring with a (small) probability a mutation may occur. In this case, the offspring makes an instantaneous mutation step at birth to a new trait value. This mutation step is driven by a mutation kernel (a probability kernel) that depends only on the parent trait. The authors then pass from the microscopic description of the population on the level of individuals to a macroscopic description on the level of population mass distribution in the trait space.

It is important to point out that whereas most organisms rely on binary reproduction for propagation, many other use alternative mechanisms which include multiple offspring in order to reproduce and remain competitive; see for example \cite{Bio1}, \cite{Bio2} and \cite{Bio3}.
Thus, in this work, we are interested in generalizing this microscopic model in that we allow individuals to have multiple offspring at a reproduction event.
More precisely, we consider a general offspring distribution where the number of children produced by each individual depends on its trait as well as on the new trait that appears in case a mutation occurs. We have a number of scenarios in mind in which such a dependence may occur. One such scenario is modelling so-called "jackpot" events, introduced in a seminal paper of Luria and Delbr\"uck \cite{LuriaDelbrueck1943}, in which particular mutants rapidly create a sizeable mutant subpopulation - in the original famous Luria and Delbr\"uck experiment because they are more resistant to detrimental effects of the environment.

In our model, this mutant subpopulation is created instantaneously and we refer to them simply as mutant offspring.
Let us consider a particular scenario, namely the evolution of different strains of virus populations or other microparasites with fast adaptation, 
in order to motivate a dependence of the offspring distribution on the parent as well as on the mutant strain.

Virus populations evolve as subpopulations within hosts that in turn infect other hosts and thus create new evolving subpopulations. Within each host subpopulations of different strains of the virus will generally be present
(due to the initial infection but in particular due to mutation during the infection)
and their evolution is affected by the immune system, that reacts to the presence of particular strains that it has already recognized. This leads to an increased death rate of a prevalent subpopulation, which we model by a competition term, effectively a death rate that depends on the size and proximity (in type space) of the entire virus population.

A mutant type -sufficiently different from the parent type say- may on the other hand quickly establish a sizeable subpopulation, whose size could also depend on the intrinsic fitness of their trait type, before they are targeted by the immune system ("immune escape"). Admittedly, a dependence on the size and proximity of the entire virus population (as in the competition term), which is shaping the current immune system and its response,  could be even more desirable. But we view the dependence on the parent type as a first step in this direction, which is in particular realistic if a mutation to an epitope site results in a completely new phenotype of the virus' antibody-binding sites. (We note that our general type space and set-up may also be used to explicitly model the interplay between frequent epitope mutations and relatively rare non-epitope mutations affecting the fitness, see Strelkowa and
L\"assig \cite{StrelkowaLaessig12} for a discussion.)

On the level of the hosts a similar dynamic is at play. The infection of a new host (with a particular virus type) is affected by the (local) availability of hosts that are yet uninfected by a particular strain. Thus again new mutants may have an initial advantage that could depend on a strain's intrinsic fitness as well as on how different it is from previous strains.
We refer to Remark \ref{rem1}  (c) for a choice of mutation dependent offspring distributions that could be suitable in modelling the situation described above.

We note that there is an extensive research literature on analysing the spread of different virus strains (and their genealogies), see for example \cite{StrelkowaLaessig12, Desaietal2013, Neher2013, NeherHallatschek2013}  
and references therein. Mathematically rigorous results for models with fixed total population size and particular type spaces
can be found in recent work by Schweinsberg \cite{SchweinsbergI2017,SchweinsbergII2017}, see Dawson and Greven \cite{DawsonGreven2013}  for a general treatment of such models. 


Apart from an interpretation of the type space as a space of genetic or phenotypic traits we could also go back to the interpretation of the type space as a spatial location of individuals (or a combination of the two). In a spatial setting Fournier and M\'el\'eard \cite{Four2004} interpreted individuals as plants and the production of new individuals in the type space as a result of seed dispersal (with immediate maturation). But unlike in the model of \cite{Four2004}  seeds are not always dispersed individually but may be dispersed in groups, in particular when the seeds within fruit are consumed by animals and carried over larger distances, see \cite{CortesUriarte2012, Schuppetal2010} for some recent biological literature highlighting the importance of these dispersal mechanisms. How many of these seeds establish themselves at their new location may depend on the parent location, which can influence how many viable seeds were produced, as well as on the new location, which may be more or less favorable.

Finally, we point out that our model can easily be adjusted to also include mutation of individuals during their lifetime. In this case,
the "birth" of a (or multiple) individuals at a new location in the type space would happen at the same time as the "death" of the individual at the original location. (If we think again of a geographical space then this would be migration.) Details are left to the interested reader. 


As in previous work, the main goal of our work is to look for macroscopic approximations, namely for tractable large population approximations of the individual-based models 
when the size of the population tends to infinity, combined with frequent mutation and accelerated birth and death. The latter is known as allometric demographies or allometric effects (larger populations made up of smaller individuals who reproduce and die faster); see for example \cite[Section 4.2]{Cha2008} and reference therein for background. Basically, this leads to systems in which organisms have short lives and reproduce fast while their colonies or populations grow or decline on a slow timescale.
We proceed with tightness-uniqueness arguments inspired by the classical theory of superprocesses \cite{Dawson1991} and \cite{Li2011} without interaction. Clearly,  difficulties arise due to the lack of the branching property which is a consequence of the dependency created by the competition between individuals. Nevertheless, following ideas of M\'el\'eard \cite{Four2004} and Champagnat, et al. \cite{Cha2008} we introduce a new infinite dimensional martingale problem. In the limit, we obtain a measure-valued process defined as the solution of this nonlinear martingale problem. The proof of uniqueness of such a martingale problem requires substantial work. We develop a new Girsanov type theorem which allows us to get rid of the non-linearities caused by the competition. This Girsanov theorem may be viewed as a generalization of Dawson's Girsanov Theorem \cite{Da1978} and may also be of independent interest.  The effect of multiple branching makes the analysis more complicated due to the loss of some moments. Therefore, we adapt the localization procedure introduced by Stroock \cite{Stroock1975} and generalized by He \cite{He2009} to the measure-valued context.

It is important to point out that the nonlinear superprocess obtained as the limit generalizes, for instance, the work of \cite{Elk1991}, \cite{Fitz1992}, \cite{DawDoGo2002} or \cite{Perkins2002} by incorporating interaction. On the other hand, the general reproduction law of the approximating population system yields a limiting process with a general branching mechanism, which extends the models proposed by M\'el\'eard \cite{Four2004},  Champagnat, et al. \cite{Cha2008}, Jourdain, et al. \cite{Jou} and Etheridge \cite{Eth2004} to study spatially interactive structured populations. Let us remark that our model allows the description of massive reproduction events which translate into discontinuities of the limiting process. This can be the first step to analyzings superprocesses with interactions that possess a jump structure.

The outline of the remainder of this paper is as follows. Section \ref{model} is devoted to the introduction of the individual-based model we are interested in. Here, we also prove some useful properties of the model. The main
convergence result based on a large population limit is stated in Section \ref{mainresult}. In Section  \ref{profteo3}, we prove tightness of the laws of the particle processes and we identify the limiting values as solutions of a nonlinear martingale problem. The uniqueness of such a martingale problem is attended to in Sections \ref{Uni} and \ref{lastT}.

\section{The individual-based model} \label{model}

In this section, we formally introduce our interacting particle Markov process for Darwinian evolution in an asexual population with non-constant population size in which each individual
is characterized by hereditary types. Our model's construction starts with a microscopic description of a population in which the adaptive traits influence the birth rate, the mutation process, the death rate, and how the individuals interact with each other and their external environment. More precisely, we assume that the phenotype of each individual is described by a quantitative trait. Throughout the paper, we will assume that
\begin{center}
the trait space $\mathcal{X}$ is a Polish space that is locally compact.
\end{center}

In the following we should consistently refer to $x \in  \mathcal{X}$ as either a "trait" or a "type". We consider a parameter $K \in \mathbb{N}$ that scales the resources or area available. It is called the ``system size'' by Metz et al. \cite{Met1996}. It will become apparent later that this parameter is linked to the size of the population: large $K$ means a large population (provided that the initial condition is proportional to $K$). We have the following definition of the stochastic interacting individual system where individuals behave independently:
\begin{enumerate}
\item {\bf Birth and mutation:} An individual of trait type $x \in \mathcal{X}$ gives birth at rate $b_{K}(x) \in \mathbb{R}_{+}$. The number of offspring born at each birth time is controlled by a Markov kernel $\pi_K$ on $\mathcal{X}^2\times \mathbb{N}$ i.e. by a family of offspring distributions
    indexed by $\mathcal{X}\times\mathcal{X}$, say
    \begin{equation}
    \label{e:pi}
       \pi^{K}=\big(\pi^{K}(x,h)=(\pi^{K}(x,h,k),\,k\geq 1),\, x,h\in\mathcal{X}\big)
    \end{equation}
       such that  $(x,h) \mapsto \pi^{K}(x,h,\cdot)$ is measurable and $\sum_{k=1}^{\infty} \pi^{K}(x,h,k) = 1$ for all $x,h \in \mathcal{X}$. More precisely, each individual of type $x$ gives birth independently to $k$ clonal individuals with probability $\pi^{K}(x,x,k)(1-p(x))$, where $p(x) \in [0,1]$ is the mutation probability of an individual with trait $x \in \mathcal{X}$. Otherwise, it produces $k$ individuals of type $h$ with probability $p(x) \pi^{K}(x,h,k) m_{K}(x, {\rm d}h)$, where $m_{K}(x, \cdot)$ is a probability measure on $\mathcal{X}$ called the mutation kernel or mutation step law. Note here that the new type $h$ only depends on $x$ while the number of individuals produced depends on $x$ and $h$.

\item {\bf Natural death:} An individual of type $x \in \mathcal{X}$ dies naturally at rate $d_{K}(x) \in \mathbb{R}_{+}$.

\item {\bf Competition:} We let $c_{K}(x, y) \in \mathbb{R}_{+}$ be the competition kernel which models the competition pressure felt by an individual with trait $x \in \mathcal{X}$ from an individual with type $y \in \mathcal{X}$. We then add extra death due to competition. Specifically, each individual of type $y$ points independent exponential clocks of parameter $c_{K}(x, y)$ on each individual of type $x$. Then, the death of an individual of type $x$ occurs as soon as a clock pointed at this individual rings.
\end{enumerate}

Let $\mathcal{M}(\mathcal{X})$ denote the set of finite Borel measures on $\mathcal{X}$ equipped with the weak topology, and define the subset $\mathcal{M}^{K}(\mathcal{X})$ of $\mathcal{M}(\mathcal{X})$ by
\begin{eqnarray*}
\mathcal{M}^{K}(\mathcal{X}) = \left\{
\frac{1}{K} \sum_{i =1}^{n} \delta_{x_{i}}: n \geq 0, \, x_{1}, \dots, x_{n} \in \mathcal{X} \right \},
\end{eqnarray*}

\noindent where $\delta_{x}$ is the Dirac measure at $x$. For any $\mu \in \mathcal{M}(\mathcal{X})$ and any measurable
function $f$ on $\mathcal{X}$, we set
$\langle \mu, f  \rangle = \int_{\mathcal{X}} f {\rm d} \mu$. 

At any time $t \geq 0$, we let $N_{t}$ be the finite number of individuals alive, each of which is assigned a trait type in $\mathcal{X}$. Let us denote by $x_{1}, \dots, x_{N_{t}}$ the trait types of these individuals. The state of the population at time $t \geq 0$, rescaled by $K$, can be described by the finite point measure $\nu_{t}^{K}$ on $\mathcal{X}$
defined by
\begin{eqnarray*}
\nu_{t}^{K} = \frac{1}{K} \sum_{i=1}^{N_{t}} \delta_{x_{i}}.
\end{eqnarray*}

We let $\mathds{1}_{A}$ be the indicator function of a set $A \subset \mathcal{X}.$
For simplicity, we denote by $\mathbf{1}:=\mathds{1}_{\mathcal{X}}$ the indicator function on  the whole space. We observe that $\langle \nu_{t}^{K}, \mathbf{1}  \rangle = N_{t} K^{-1}$. For any $x \in \mathcal{X}$, the positive number  $\langle \nu_{t}^{K}, \mathds{1}_{\{ x\}}  \rangle$ is called the density of the trait $x$ at time $t$.

In the next section, we are going to  construct under suitable assumptions a
$\mathcal{M}^{K}(\mathcal{X})$-valued Markov process with
infinitesimal generator, $\mathscr{L}^{K}$, defined for a convergence determining subspace of bounded measurable functions $f$ from $\mathcal{M}^{K}(\mathcal{X})$ to $\mathbb{R}$ and for all $\mu^{K} \in \mathcal{M}^{K}(\mathcal{X})$ by
\begin{align} \label{eq1}
\mathscr{L}^{K} f(\mu^{K}) & =  K \int_{\mathcal{X}}  \sum_{k=1}^{\infty} b_{K}(x)(1-p(x)) \pi^{K}(x,x, k)
\left(  f \left(   \mu^{K} + k \frac{\delta_{x}}{K} \right) - f(\mu^{K}) \right) \mu^{K}({\rm d} x) \nonumber \\
& \hspace*{5mm} +  K  \int_{\mathcal{X}} \sum_{k=1}^{\infty} b_{K}(x) p(x)   \int_{\mathcal{X}} \pi^{K}(x,h,k) \left(  f \left(   \mu^{K} + k\frac{\delta_{h}}{K} \right) - f(\mu^{K}) \right) m_{K}(x, {\rm d} h) \mu^{K}({\rm d} x) \nonumber \\
& \hspace*{5mm} +  K \int_{\mathcal{X}}  \left( d_{K}(x) + K \int_{\mathcal{X}} c_{K}(x,y) \mu^{K}({\rm d} y) \right)  \left(  f \left(   \mu^{K} - \frac{\delta_{x}}{K} \right) - f(\mu^{K}) \right) \mu^{K}({\rm d} x).
\end{align}

\noindent The construction is inspired by \cite{Cha2008} and \cite{Four2004}, who consider the case of binary reproduction and an offspring distribution that is independent of the trait type. In this more general setting to the best of our knowledge, the construction has not been made rigorous before. Therefore, we present the details 
in order to make this work self-contained.

\begin{remark} In our model we assume that in case of mutation all offspring will have the same mutant trait. One could consider more general dynamics in which at a mutation event each new offspring could mutate into a different trait independently of its sibling. This clearly will make the model more realistic but mathematically more involved and complicated. Thus, we leave it as an open problem. On the other hand, we recall that we are primarily interested in studying the (potentially fast) rise in numbers of individuals of new traits, and this is what the proposed model is trying to capture.
\end{remark}

In the present paper we use the following notation. Given a topological space $V$, let $\mathbb{B}(V)$  denote the Borel $\sigma$-algebra on $V$. Let $W$ be another topological space with its respective $\sigma$-algebra $\mathbb{B}(W).$ Then we denote by  $B(V, W)$  the set of bounded measurable functions from $V$ to $W$. Let $T >0$ and $\mathbb{D}([0,T], V)$ (resp. $\mathbb{D}([0,\infty), V)$) denote the space of c\`adl\`ag paths from $[0,T]$ (resp. from $[0, \infty)$)  to $V$ furnished with the Skorokhod topology.
For a metric space $\bar{V}$ let $\mathcal{P}(\bar{V})$ be the family of Borel probability measures on $\bar{V}$ equipped with the Prohorov metric. Let $B(V, \mathbb{R})$ be furnished with the supremum norm (i.e. for $f \in B(V, \mathbb{R})$, we write $\parallel f \parallel_{\infty} = \sup_{x \in V} |f(x)|$)  and $B(V, \mathbb{R}_{+})$ denote the subset of $B(V, \mathbb{R})$ of positive elements. We use $C_{b}(V, \mathbb{R})$ (resp. $C_{b}(V, \mathbb{R}_{+})$) to denote the set of bounded continuous functions from $V$ to $\mathbb{R}$ (resp. from $V$ to $\mathbb{R}_{+}$). For any integer $n \geq 1$, let
$C_{b}^{n}(\mathbb{R}, \mathbb{R})$ (resp. $C_{b}^{n}(\mathbb{R}, \mathbb{R}_{+})$, $C_{b}^{n}(\mathbb{R}_{+}, \mathbb{R}_{+})$)  be the subset of $C_{b}(\mathbb{R}, \mathbb{R})$ (resp. $C_{b}(\mathbb{R}, \mathbb{R}_{+})$,  $C_{b}(\mathbb{R}_{+}, \mathbb{R}_{+})$) of functions with bounded continuous derivatives up to the $n$-th order. If $V$ is locally compact and separable, we write $C_{0}(V, \mathbb{R})$ for the space of continuous functions from $V$ to $\mathbb{R}$ which vanish at infinity (i.e.\ $f \in C_{0}(V, \mathbb{R})$ if for all $\varepsilon >0$ there exists $E \subset V$ compact such that for all $x \in V \setminus E$ one has that $|f(x)| \leq \varepsilon)$. Let $\hat{\mathcal{X}} = \mathcal{X} \cup \{ \partial \}$ be the one-point compactification of $\mathcal{X}$, with $\hat{\mathcal{X}} = \mathcal{X}$ whenever $\mathcal{X}$ is compact. Let $C_{\partial}(\mathcal{X}, \mathbb{R})$ (resp. $C_{\partial}(\mathcal{X}, \mathbb{R}_{+})$, $C_{\partial}(\mathcal{X} \times \mathcal{X}, \mathbb{R}_{+})$) be the set of functions in $C_{b}(\mathcal{X}, \mathbb{R})$ (resp. $C_{b}(\mathcal{X}, \mathbb{R}_{+})$, $C_{b}(\mathcal{X} \times \mathcal{X}, \mathbb{R}_{+})$) that can be extended continuously to $\hat{\mathcal{X}}$ (resp. $\hat{\mathcal{X}} \times \hat{\mathcal{X}}$). Furthermore, in the case that $\mathcal{X}$ is a subset of $\mathbb{R}^{l}$ ($l \geq 1$), we let $C_{\partial}^{n}(\mathcal{X}, \mathbb{R})$ (resp. $C_{\partial}^{n}(\mathcal{X}, \mathbb{R}_{+})$, $C_{\partial}^{n}(\mathcal{X} \times \mathcal{X}, \mathbb{R}_{+})$) be the set of functions in $C_{b}^{n}(\mathcal{X}, \mathbb{R})$ (resp. $C_{b}^{n}(\mathcal{X}, \mathbb{R}_{+})$, $C_{b}^{n}(\mathcal{X} \times \mathcal{X}, \mathbb{R}_{+})$) which together with their derivatives up to the $n$-th order can be extended continuously to $\hat{\mathcal{X}}$ (resp. $\hat{\mathcal{X}} \times \hat{\mathcal{X}}$). Finally, we use the superscript ``$+$'' to denote the subsets of non-negative elements bounded away from zero e.g., $B(V, \mathbb{R}_{+})^{+}$,  $C_{b}(V, \mathbb{R}_{+})^{+}$, etc. That is, for $f \in B(V, \mathbb{R}_{+})^{+}$ (or $C_{b}(V, \mathbb{R}_{+})^{+}$) there exists an $\varepsilon >0$ such that $f(x) \geq \varepsilon$ for all $x \in V$.

\subsection{Poissonian construction} \label{poissonc}

We provide a path-wise description of the stochastic process $(\nu^{K}_{t}, t \geq 0)$. For this we will use the following:

\begin{assumption} \label{assum1} Assumptions on the population parameters of the model:
\begin{itemize}
\item[(i)] The birth and natural death rate belong to $B(\mathcal{X}, \mathbb{R}_{+})$. So, there exist $0 < \overline{b}, \overline{d}< + \infty$ (that may depend on $K$) such that $b_{K}(\cdot) \leq \overline{b}$ and $d_{K}(\cdot) \leq \overline{d}$.

\item[(ii)] The competition kernel belongs to $B(\mathcal{X} \times \mathcal{X}, \mathbb{R}_{+})$. So, there exists $0 <  \overline{c} < + \infty$ (that may depend on $K$) such that $ c_{K}(\cdot, \cdot) \leq \overline{c}$.

\item[(iii)] The mutation kernel $m_{K}(x, {\rm d} h)$ is absolutely continuous with respect to a $\sigma$-finite probability measure $\bar{m}$ on $\mathcal{X}$ with density $m_{K}(x, h)$.
\end{itemize}
\end{assumption}

We need the following notation:

\begin{notation} \label{not1}
Fix $0 \in \mathcal{X}$ (an arbitrary element), let $\mathbf{H} = (H_{1}, H_{2}, \dots, H_{k}, \dots): \mathcal{M}^{K}(\mathcal{X}) \mapsto \mathcal{X}^{\mathbb{N}}$ be defined by
\begin{eqnarray*}
\mathbf{H}\left(\frac{1}{K} \sum_{i =1}^{n} \delta_{x_{i}} \right) = (x_{\theta(1)}, \dots, x_{\theta(n)}, 0, \dots, 0), \hspace*{5mm} \text{for} \hspace*{5mm} n \geq 1,
\end{eqnarray*}

\noindent where $x_{\theta(1)} \preceq \dots \preceq x_{\theta(n)}$ for some arbitrary (but fixed) order $\preceq$ on $\mathcal{X}$. 
\end{notation}

To avoid confusion, it is important to notice that the function $\mathbf{H}$ is listing all the $x_{i}$'s in some order and that there may be  
repetitions. The function $\mathbf{H}$ allows us to label the individuals in a population described by a measure in $\mathcal{M}^{K}(\mathcal{X})$ in an arbitrary way (here depending on their types). The vector that is given by $\mathbf{H}$ will be useful later on when we want to attach Poisson processes to all individuals and want them to interact (at the jump times of these Poisson processes) with the rest of the population according to their trait type.

\begin{notation}
We consider the space
$\mathcal{C}^{K} \subseteq \mathbb{D}([0, \infty), \mathcal{M}(\mathcal{X}))$ of piecewise constant cadlag paths, i.e.
\begin{eqnarray*}
    \mathcal{C}^{K} := \left\{ (\nu_{t}, t \geq 0) \, \Big| \, \mycom{\forall \, t \geq 0, \, \nu_{t} \in \mathcal{M}^{K}(\mathcal{X}), \, \, \text{and} \, \, \exists \, 0 = t_{0} < t_{1} < t_{2} < \cdots,}{\lim_{n \rightarrow \infty} t_{n}= \infty \, \, \text{with} \, \, \nu_{t} = \nu_{t_{i}} \, \forall \, t \in [t_{i}, t_{i+1} ) }  \right \}.
\end{eqnarray*}
\end{notation}

Observe that for $(\nu_{t}, t \geq 0) \in \mathcal{C}^{K}$, and $t >0$ we can define $\nu_{t-}$ in the following way: If $ t \not \in \bigcup_{i\geq 0} \{ t_{i} \}$, $\nu_{t-} = \nu_{t}$, while if $t=t_{i}$, for some $i \geq 1$, $\nu_{t-}=\nu_{t_{i-1}}$.

We now introduce some Poisson point processes that we need. {We will write $\lambda$ for the Lebesgue measure on $\mathbb{R}_+$ and $n$ for the counting measure on $\mathbb{N}$.}

\begin{definition}
Let $(\Omega, \mathcal{F}, \mathbb{P})$ be a (sufficiently large) probability space. On this space, we consider the following four independent random elements:
\begin{enumerate}
\item {\bf Initial distribution:} Let $\nu_{0}^{K}$ be a $\mathcal{M}^{K}(\mathcal{X})$-valued random variable.

\item {\bf Clonal birth:} Let $N_{\text{c}}$
be a Poisson point measure on
$\mathbb{R}_{+}\times \mathbb{N}  \times \mathbb{N} \times \mathbb{R}_{+}$, \\with intensity measure
$\lambda\otimes n\otimes n\otimes\lambda$.

\item {\bf Mutation:} Let $N_{\text{m}}$
be a Poisson point measure on
$\mathbb{R}_{+} \times \mathbb{N}  \times \mathcal{X} \times \mathbb{N} \times \mathbb{R}_{+}$, \\with intensity measure
$\lambda\otimes n\otimes\bar{m}\otimes n\otimes\lambda$.

\item {\bf Natural death and competition:} Let $N_{\text{d}}$
be a Poisson point measure on
$\mathbb{R}_{+} \times \mathbb{N} \times \mathbb{R}_{+}$, \\ with intensity measure
$\lambda\otimes n\otimes\lambda$.
\end{enumerate}

Let us denote by $(\mathcal{F}_{t})_{t \geq 0}$ the canonical filtration generated by these processes.
\end{definition}

Finally, we define the population process in terms of the previous Poisson measures.

\begin{definition} \label{def1}
An $(\mathcal{F}_{t})_{t \geq 0}$-adapted stochastic process $\nu^{K} = (\nu_{t}^{K}, t \geq 0)$ that belongs a.s. to $\mathcal{C}^{K}$ will be called the population process if a.s., for all $t \geq 0$,
\begin{align*}
\nu_{t}^{K} & = \nu_{0}^{K} + \int_{[0,t]} \int_{\mathbb{N}} \int_{\mathbb{N}} \int_{ \mathbb{R}_{+}} k \frac{\delta_{H_{i}(\nu^{K}_{s-})}}{K}  \mathds{1}_{\{ i \leq K \langle \nu_{s-}^{K}, \mathbf{1}\rangle \}} \\
& \hspace*{20mm} \mathds{1}_{ \{z \leq b_{K}(H_{i}(\nu_{s-}^{K})) (1-p(H_{i}(\nu_{s-}^{K})))\pi^{K}(H_{i}(\nu_{s-}^{K}),H_{i}(\nu_{s-}^{K}) , k)  \}} N_{{\rm c}}({\rm d}s, {\rm d}i, {\rm d}k, {\rm d}z )
\\
& \hspace*{5mm}  + \int_{[0,t]} \int_{\mathbb{N}} \int_{\mathcal{X} \times \mathbb{N}} \int_{ \mathbb{R}_{+}} k \frac{\delta_{ h}}{K} \mathds{1}_{\{ i \leq K \langle \nu_{s-}^{K}, \mathbf{1}\rangle \}} \\
& \hspace*{20mm} \mathds{1}_{ \{z \leq b_{K}(H_{i}(\nu_{s-}^{K})) p(H_{i}(\nu_{s-}^{K})) \pi^{K}(H_{i}(\nu_{s-}^{K}), h, k)m_{K}(H_{i}(\nu_{s-}^{K}),h)\}} N_{{\rm m}}({\rm d}s, {\rm d}i, {\rm d}h, {\rm d} k, {\rm d}z )
\\
& \hspace*{5mm} - \int_{[0,t]} \int_{\mathbb{N}} \int_{ \mathbb{R}_{+}} \frac{\delta_{H_{i}(\nu^{K}_{s-})}}{K} \mathds{1}_{\{ i \leq K \langle \nu_{s-}^{K}, \mathbf{1}\rangle \}} \\
& \hspace*{20mm} \mathds{1}_{ \{z \leq  d_{K}(H_{i}(\nu_{s-}^{K})) + K\int_{\mathcal{X}} c_{K}(H_{i}(\nu_{s-}^{K}),y) \nu_{s-}^{K}({\rm d} y) \}} N_{{\rm d}}({\rm d}s, {\rm d}i, {\rm d}z ).
\end{align*}
\end{definition}
\noindent
In order to show existence and some moment properties for the population process in Definition \ref{def1} we need another assumption.
\begin{assumption} \label{assum2}
We consider the following moment conditions:
\begin{itemize}
\item[(i)] The offspring distribution $\pi^{K}$ has finite mean, namely
\begin{eqnarray*}
\kappa_{1} = \sup_{x,h \in \mathcal{X}} \sum_{k = 1}^{\infty} k\pi^{K}(x,h,k) < + \infty.
\end{eqnarray*}

\item[(ii)] The measure $\nu_{0}^{K}$ has finite mean, that is
\begin{eqnarray*}
 \mathbb{E}[\langle \nu_{0}^{K}, \mathbf{1}  \rangle  ] < + \infty.
\end{eqnarray*}
\end{itemize}
\end{assumption}

We now show that under Assumptions \ref{assum1} and \ref{assum2} the stochastic process $\nu^{K} = (\nu_{t}^{K}, t \geq 0)$ from Definition \ref{def1} is well-defined. Observe that the total jump rate of $\nu^{K}_{t}$ is bounded by a polynomial in the total mass at time $t \geq 0$ by Assumption \ref{assum1}. Therefore, the process is well-defined on the interval $[0, \tau_{n}]$, where for $n \geq 1$
\begin{equation}
\label{e:taun}
   \tau_{n} := \inf \{t \geq 0: \langle \nu_{t}^{K}, \mathbf{1} \rangle  \geq n/K \}.
\end{equation}
Moreover, the process is shown to be well-defined if we can exclude explosion of the total mass.
Thus the goal then is to show that $\tau_{n} \rightarrow \infty$ almost surely as $n \rightarrow \infty$.

\begin{theorem} \label{teo1}
Suppose that Assumptions \ref{assum1} and \ref{assum2} are fulfilled. Then the following hold:
\begin{itemize}
\item[(a)] The stochastic process $\nu^{K} = (\nu_{t}^{K}, t \geq 0)$ from Definition \ref{def1} is well-defined and it is not explosive.
\item[(b)] Moreover, assume that for some $q \geq 1$, 
\begin{eqnarray*}
\kappa_{q} = \sup_{x,h \in \mathcal{X}} \sum_{k = 1}^{\infty} k^{q}\pi^{K}(x,h,k) < + \infty \hspace*{5mm} \text{and} \hspace*{5mm} \mathbb{E}[\langle \nu_{0}^{K}, \mathbf{1}  \rangle^{q}  ] < + \infty.
\end{eqnarray*}
Then for any $0 < T < + \infty$,
\begin{eqnarray} \label{eq3}
\mathbb{E} \big[  \sup_{t \in [0,T]} \langle \nu_{t}^{K}, \mathbf{1} \rangle^{q}\big]  < + \infty.
\end{eqnarray}
\end{itemize}
\end{theorem}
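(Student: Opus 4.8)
The plan is to prove (a) and (b) together by a single stopping-time and moment-estimate argument, using the Poissonian representation in Definition \ref{def1}. First I would fix $q\in\mathbb N$ with the hypotheses of (b) in force (the case $q=1$ under Assumptions \ref{assum1} and \ref{assum2} is what is needed for (a)), and work on the stochastic interval $[0,\tau_n]$, where $\tau_n$ is the explosion time defined in \eqref{e:taun}. On this interval the process is well-defined as a finite pure-jump process, since by Assumption \ref{assum1} the total jump rate is bounded by $K(\bar b+\bar d+K\bar c\,n/K)\cdot n/K$ when the mass is at level $n/K$, i.e.\ finite. The key quantity to control is $\langle \nu^K_{t\wedge\tau_n},\mathbf 1\rangle^q$. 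Applying the jump decomposition (exactly as in the proof of Proposition \ref{Prop1}, but now with $f(\mu)=\langle\mu,\mathbf 1\rangle^q$ rather than an exponential functional) and taking expectations, the death term contributes a non-positive quantity and can be dropped, the competition term likewise only helps, and one is left with estimating the birth and mutation contributions. For a clonal or mutant birth event producing $k$ offspring when the current mass is $m=\langle\nu^K_{s-},\mathbf 1\rangle$, the increment of the functional is $(m+k/K)^q-m^q$, which by the elementary inequality $(a+b)^q-a^q\le C_q(a^{q-1}b+b^q)$ (for $a,b\ge 0$) is bounded by $C_q\big(m^{q-1}k/K+(k/K)^q\big)$.

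Next I would integrate against the intensity. Summing over individuals contributes a factor $Km=K\langle\nu^K_{s-},\mathbf 1\rangle$; summing over $k$ against the offspring law $\pi^K$ brings in $\sum_k k\,\pi^K(\le\kappa_1)$ in the first term and $\sum_k k^q\pi^K(\le\kappa_q)$ in the second (after absorbing $K^{-q}\le 1$), and the birth rate is bounded by $\bar b$ while $\int \pi^K(x,h,k)m_K(x,h)\bar m(\mathrm dh)=\int \pi^K(x,h,k)m_K(x,\mathrm dh)$ and $\sum_k$ of this is $1$ by Assumption \ref{assum1}(iii). The upshot is a bound of the form
\begin{eqnarray*}
\mathbb E\big[\langle\nu^K_{t\wedge\tau_n},\mathbf 1\rangle^q\big]
\;\le\; \mathbb E\big[\langle\nu^K_0,\mathbf 1\rangle^q\big]
+ C\int_0^t \mathbb E\big[\langle\nu^K_{s\wedge\tau_n},\mathbf 1\rangle^q\big]\,\mathrm ds + C t,
\end{eqnarray*}
where $C=C(K,q,\bar b,\bar d,\bar c,\kappa_1,\kappa_q)$ is finite. (To make the use of the martingale property and Fubini rigorous one should first note that on $[0,\tau_n]$ the mass is bounded by $n/K$, so all the compensators are genuinely integrable; the localization at $\tau_n$ is exactly what legitimizes the otherwise formal computation.) Grönwall's inequality then gives $\sup_{t\le T}\mathbb E\big[\langle\nu^K_{t\wedge\tau_n},\mathbf 1\rangle^q\big]\le (\mathbb E[\langle\nu^K_0,\mathbf 1\rangle^q]+CT)e^{CT}=:A_{T,q}<\infty$, a bound \emph{uniform in} $n$.

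Then I would deduce non-explosion: since $\langle\nu^K_{\tau_n},\mathbf 1\rangle\ge n/K$ on $\{\tau_n\le T\}$, Markov's inequality gives $\mathbb P(\tau_n\le T)\le (K/n)^q A_{T,q}\to 0$ as $n\to\infty$ (using $q\ge 1$), so $\tau_n\to\infty$ a.s., which proves (a). Finally, for the $L^q$ \emph{supremum} bound \eqref{eq3} in (b) I would go back to the jump representation of $M^K_t:=\langle\nu^K_{t\wedge\tau_n},\mathbf 1\rangle^q - (\text{compensator})$, which is a martingale on $[0,\tau_n]$; split $\sup_{t\le T\wedge\tau_n}\langle\nu^K_t,\mathbf 1\rangle^q$ into the (monotone, hence already controlled) compensator part plus $\sup_{t}|M^K_t|$, and apply Doob's $L^2$ inequality to $M^K$ — or, more robustly since $M^K$ is a pure-jump (purely discontinuous) martingale, the Burkholder–Davis–Gundy inequality together with the bound on the quadratic variation, which is again dominated by the same integrals of $\langle\nu^K_{s},\mathbf 1\rangle$-polynomials already estimated. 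This yields $\mathbb E[\sup_{t\le T\wedge\tau_n}\langle\nu^K_t,\mathbf 1\rangle^q]\le A'_{T,q}$ uniformly in $n$, and letting $n\to\infty$ with Fatou gives \eqref{eq3}.

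The main obstacle is the bookkeeping in the moment estimate for general $q$ and multiple offspring: one must carefully track how the factor $1/K$ from the rescaling interacts with the $k^q$ growth of the offspring, and ensure the correct cancellation $\sum_k \pi^K(x,h,k)m_K(x,\mathrm dh)\to m_K(x,\mathrm dh)$ in the mutation term so that no spurious factor of $K$ survives; the inequality $(a+b)^q-a^q\le C_q(a^{q-1}b+b^q)$ and the finiteness of $\kappa_q$ are precisely what keep the right-hand side linear in $\mathbb E[\langle\nu^K_s,\mathbf 1\rangle^q]$ so that Grönwall applies. A secondary subtlety is purely technical: justifying the interchange of expectation and the Poisson-integral compensation before one has any a priori moment bound — this is handled, as usual, by doing everything on $[0,\tau_n]$ first and only passing to the limit at the end.
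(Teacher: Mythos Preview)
Your moment estimate via Grönwall and the deduction of non-explosion are essentially the same as the paper's. The difference --- and the gap --- lies in how you upgrade the pointwise bound $\sup_{t\le T}\mathbb{E}[\langle\nu^K_{t\wedge\tau_n},\mathbf 1\rangle^q]\le A_{T,q}$ to the supremum bound $\mathbb{E}[\sup_{t\le T\wedge\tau_n}\langle\nu^K_t,\mathbf 1\rangle^q]\le A'_{T,q}$.

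Your proposed route via Doob/BDG does not go through under the stated hypotheses. The quadratic variation of the martingale part $M^K$ is \emph{not} ``dominated by the same integrals of $\langle\nu^K_s,\mathbf 1\rangle$-polynomials already estimated'': a birth jump of size $k/K$ at mass level $m$ contributes $\big((m+k/K)^q-m^q\big)^2\lesssim m^{2q-2}k^2/K^2+k^{2q}/K^{2q}$ to $[M^K]$, so after compensation you would need $\kappa_{2q}$ and $\mathbb{E}[m_s^{2q-1}]$; worse, each competition-death jump contributes $\asymp m^{2q-2}/K^2$ at total rate $\asymp K\bar c\,m^2$, forcing control of $\mathbb{E}[m_s^{2q}]$. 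None of this is available from $\kappa_q<\infty$ and $\mathbb{E}[\langle\nu_0^K,\mathbf 1\rangle^q]<\infty$ alone. (Your parenthetical that the compensator of $\langle\nu^K,\mathbf 1\rangle^q$ is ``monotone'' is also off: the death and competition contributions enter with a negative sign.)

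The paper's fix is simpler and avoids any martingale inequality: drop the death and competition terms \emph{pathwise} at the very first step. Since $x\mapsto x^q$ is increasing and deaths only decrease the mass, one has the pathwise bound
\[
\sup_{s\le t\wedge\tau_n}\langle\nu^K_s,\mathbf 1\rangle^q
\;\le\; \langle\nu^K_0,\mathbf 1\rangle^q
\;+\; J_{t\wedge\tau_n},
\]
where $J_{t}$ is the (uncompensated) Poisson integral of the birth increments $(m+k/K)^q-m^q\ge 0$ against $N_{\rm c}$ and $N_{\rm m}$. The process $J$ is non-decreasing, so its supremum over $[0,t\wedge\tau_n]$ is just $J_{t\wedge\tau_n}$; taking expectations replaces $J$ by its compensator (legitimate on $[0,\tau_n]$), and \emph{that} compensator is exactly the integral you already bounded by $C\int_0^t(1+\mathbb E[\langle\nu^K_{s\wedge\tau_n},\mathbf 1\rangle^q])\,\mathrm ds$ using only $\kappa_1,\kappa_q$. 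Grönwall then gives the supremum bound directly, uniformly in $n$, and Fatou finishes after $\tau_n\to\infty$.
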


\begin{proof}
Claim (a) is a consequence of claim (b). Indeed, we can build the solution $\nu^{K} = (\nu^{K}_{t}, t \geq 0)$ step by step using Definition \ref{def1} (see for instance \cite[Section 2.3]{Four2004} in a similar setting). We have to check only that the sequence of (effective or fictitious) jump instants $(T_{n}, n \geq 0)$ goes a.s. to infinity as $n \rightarrow \infty$ (i.e. there is no explosion in finite time), and this follows from (b) with $q = 1$ due to the uniform (in $\mathcal{X}$) boundedness of the rates by Assumption \ref{assum1}.

We now prove (b). Recall $\tau_n$ from (\ref{e:taun}).
Then, a simple computation using Assumption \ref{assum1} shows that, dropping the non-positive death terms on Definition \ref{def1} yields
\begin{align*}
\sup_{s \in [0, t \wedge \tau_{n}]} \langle \nu_{s}^{K}, \mathbf{1} \rangle^{q} & \leq \langle \nu_{0}^{K}, \mathbf{1} \rangle^{q} + \int_{[0,t \wedge \tau_{n}]} \int_{\mathbb{N}} \int_{\mathbb{N}} \int_{ \mathbb{R}_{+}} \left( \left(\langle \nu_{s-}^{K}, \mathbf{1} \rangle + \frac{k}{K} \right)^{q} - \langle \nu_{s-}^{K}, \mathbf{1} \rangle^{q} \right)  \mathds{1}_{\{ i \leq K \langle \nu_{s-}^{K}, \mathbf{1}\rangle \}} \\
& \hspace*{20mm} \mathds{1}_{ \{z \leq b_{K}(H_{i}(\nu_{s-}^{K})) (1-p(H_{i}(\nu_{s-}^{K})))\pi^{K}(H_{i}(\nu_{s-}^{K}), H_{i}(\nu_{s-}^{K}), k)  \}} N_{{\rm c}}({\rm d}s, {\rm d}i, {\rm d}k, {\rm d}z )
\\
& \hspace*{5mm}  + \int_{[0,t \wedge \tau_{n}]} \int_{\mathbb{N}} \int_{\mathcal{X} \times \mathbb{N}} \int_{ \mathbb{R}_{+}} \left( \left(\langle \nu_{s-}^{K}, \mathbf{1} \rangle + \frac{k}{K} \right)^{q} - \langle \nu_{s-}^{K}, \mathbf{1} \rangle^{q} \right) \mathds{1}_{\{ i \leq K \langle \nu_{s-}^{K}, \mathbf{1}\rangle \}} \\
& \hspace*{20mm} \mathds{1}_{ \{z \leq b_{K}(H_{i}(\nu_{s-}^{K})) p(H_{i}(\nu_{s-}^{K})) \pi^{K}(H_{i}(\nu_{s-}^{K}), h, k)m_{K}(H_{i}(\nu_{s-}^{K}),h)\}} N_{{\rm m}}({\rm d}s, {\rm d}i, {\rm d}h, {\rm d} k, {\rm d}z ).
\end{align*}

By taking expectations and recalling Assumption \ref{assum1}, we obtain that
\begin{align*}
\mathbb{E} \big [ \sup_{s \in [0, t \wedge \tau_{n}]} \langle \nu_{s}^{K}, \mathbf{1} \rangle^{q} \big] & \leq \mathbb{E} \left [ \langle \nu_{0}^{K}, \mathbf{1} \rangle^{q} \right] \\
& \hspace*{3mm}+ K \overline{b} \, \mathbb{E} \Bigg [ \int_{0}^{t \wedge \tau_{n}}  \int_{ \mathcal{X}} \bigg( \sum_{k=1}^{\infty} \pi^{K}(x,x,k) \left( \left(\langle \nu_{s}^{K}, \mathbf{1} \rangle + \frac{k}{K} \right)^{q} - \langle \nu_{s}^{K}, \mathbf{1} \rangle^{q} \right)
\\
& \hspace*{3mm}+ \sum_{k=1}^{\infty} \int_{ \mathcal{X}} \pi^{K}(x,h,k) m_{K}(x,h) \left( \left(\langle \nu_{s}^{K}, \mathbf{1} \rangle + \frac{k}{K} \right)^{q} - \langle \nu_{s}^{K}, \mathbf{1} \rangle^{q} \right) \bar{m}({\rm d}h)
\bigg) \nu_{s}^{K}({\rm d}x ) {\rm d}s \Bigg].
\end{align*}

Next, we recall the inequality
\begin{eqnarray*}
(x+k)^{q}-x^{q} \leq  C_{q}(k^{q} + k^{q-1} x^{q-1}), \hspace*{5mm} \text{for} \hspace*{5mm}  k,x \in \mathbb{N} \cup \{0\},
\end{eqnarray*}

\noindent and some positive constant $C_{q}$ depending only on $q$. We thus obtain
\begin{eqnarray*}
\mathbb{E} \big[ \sup_{s \in [0, t \wedge \tau_{n}]} \langle \nu_{s}^{K}, \mathbf{1} \rangle^{q} \big] & \leq & \mathbb{E}[ \langle \nu_{0}^{K}, \mathbf{1} \rangle^{q}] +  2  C_{q} \, \overline{b} \, \max\left( 1, K^{1-q} \right) \mathbb{E} \left[ \int_{0}^{t \wedge \tau_{n}} (\kappa_{q} \langle \nu_{s-}^{K}, \mathbf{1} \rangle + \kappa_{q-1} \langle \nu_{s-}^{K}, \mathbf{1} \rangle^{q}) {\rm d} s \right] \\
& \leq & C_{q,K} \left( 1+
\mathbb{E} \left[  \int_{0}^{t} ( 1 + \langle \nu_{s \wedge \tau_{n}}^{K}, \mathbf{1} \rangle^{q}) {\rm d} s \right] 
 \right),
\end{eqnarray*}

\noindent where $C_{q, K}$ is a positive constant depending only on $q$ and $K$ (for the last inequality, we used that $x \leq 1 + x^{q}$, for $x \geq 0$). The Gronwall Lemma allows us to conclude that for any $T < \infty$, there exists a constant $C_{q, T}$ (not depending on $n$) such that
\begin{eqnarray} \label{eq2}
\mathbb{E} \big[ \sup_{s \in [0, T \wedge \tau_{n}]} \langle \nu_{s}^{K}, \mathbf{1} \rangle^{q} \big] & \leq & C_{q, T}.
\end{eqnarray}

Finally, we only need to deduce that $\tau_{n}$ tends a.s. to infinity in order to finish the proof. Indeed, if not, we may find a $T_{0} < \infty$ such that $\epsilon_{T_{0}} = \mathbb{P}(\sup_{n \geq 1} \tau_{n} < T_{0}) > 0$. This would imply that
$\mathbb{E}[\sup_{t \in [0, T_{0} \wedge \tau_{n}]} \langle \nu_{t}^{K}, \mathbf{1} \rangle^{q}] \geq \epsilon_{T_{0}} (n/K)^{q}$ for all $n$ which contradicts our last inequality. Therefore, we may let $n \rightarrow \infty$ in (\ref{eq2}) thanks to Fatou's Lemma and get (\ref{eq3}).
\end{proof}

Let us now show that if the population parameters satisfy Assumptions \ref{assum1} and \ref{assum2} and $\nu^{K} = (\nu^{K}_{t}, t \geq 0)$ solves the stochastic equation in Definition \ref{def1}, then it follows the dynamic we are interested in, i.e. that it has infinitesimal generator given by (\ref{eq1}).

Let  $Z = (Z_{t}, t \geq 0)$ be a Markov process $Z = (Z_{t}, t \geq 0)$ with state space $(E,d)$ defined on some probability space $(\tilde{\Omega}, \tilde{\mathcal{F}}, \tilde{\mathbb{P}})$. Recall that the infinitesimal generator $\mathscr{G}$ of a Markov process $Z$ is given by
\begin{eqnarray*}
\mathscr{G}(f(z)) := \frac{{\rm d}}{{\rm d} t} \tilde{\mathbb{E}}[f(Z_{t})] \Big|_{t=0} = \lim_{t \rightarrow 0} \frac{\tilde{\mathbb{E}}[f(Z_{t})] - f(z)}{t}, 
\end{eqnarray*} 
\noindent for $z \in E$ and $f \in B(E, \mathbb{R})$ for which this limit exists, see \cite[equation (1.10) in Section 1.1]{Ethier1986}.

\begin{proposition} \label{Prop1}
Assume Assumption \ref{assum1} and  \ref{assum2} and consider $\nu^{K} = (\nu_{t}^{K}, t \geq 0)$ as in Definition \ref{def1}. 
Then $\nu^{K}$ is a Markov process. Its infinitesimal generator $\mathscr{L}^{K}$ is given by (\ref{eq1}), and it is defined for all functions $f \in B(\mathcal{M}^{K}(\mathcal{X}), \mathbb{R})$ such that for $u \in [0,1]$ and $\mu \in \mathcal{M}^{K}(\mathcal{X})$
\begin{eqnarray} \label{eq8}
\int_{\mathcal{X}}|f(\mu - u \delta_{x}) - f(\mu)| \mu({\rm d} x) < C u,
\end{eqnarray}

\noindent where $C$ is a positive constant that does not depend on $\mu$. In particular, the law of $\nu^{K}$ does not depend on the chosen order (see Notation \ref{not1}).
\end{proposition}

\begin{proof}
The fact that $\nu^{K} = (\nu_{t}^{K}, t \geq 0)$ is a Markov process follows from its definition by classical results from the theory of Poisson random measures, see \cite[Section VI.6 and IX.3]{Erhan2011} for background and examples. Let us now prove that the infinitesimal generator of the process $\nu^{K}$ has the desired form. Consider a function $f$ as in the statement and recall that in our notation $\nu_{0}^{K} = \frac{1}{K} \sum_{i = 1}^{K \langle \nu_{0}^{K}, \mathbf{1} \rangle} \delta_{H_{i}(\nu_{0}^{K})}$. We notice that a.s.
\begin{eqnarray*}
f(\nu_{t}^{K}) = f(\nu_{0}^{K}) + \sum_{s \leq t} (f(\nu_{s-}^{K} + (\nu_{s}^{K} - \nu_{s-}^{K})) - f(\nu_{s-}^{K}) ),
\end{eqnarray*}

\noindent for $t \geq 0$. Then,
\begin{align*}
f(\nu_{t}^{K}) & = f(\nu_{0}^{K}) + \int_{[0,t]} \int_{\mathbb{N}} \int_{\mathbb{N}} \int_{ \mathbb{R}_{+}}
\left(f \left(\nu_{s-}^{K} + k \frac{\delta_{H_{i}(\nu_{s-}^{K})}}{K} \right) - f(\nu_{
s-}^{K}) \right) \mathds{1}_{\{ i \leq K \langle \nu_{s-}^{K}, \mathbf{1}\rangle \}} \\
& \hspace*{25mm} \mathds{1}_{ \{z \leq b_{K}(H_{i}(\nu_{s-}^{K})) (1-p(H_{i}(\nu_{s-}^{K})))\pi^{K}(H_{i}(\nu_{s-}^{K}), H_{i}(\nu_{s-}^{K}), k)  \}} N_{{\rm c}}({\rm d}s, {\rm d}i, {\rm d}k, {\rm d}z )
\\
& \hspace*{5mm}  + \int_{[0,t]} \int_{\mathbb{N}} \int_{\mathcal{X} \times \mathbb{N}} \int_{ \mathbb{R}_{+}}
\left(f \left(\nu_{s-}^{K} + k \frac{\delta_{ h}}{K} \right) - f(\nu_{
s-}^{K}) \right)
\mathds{1}_{\{ i \leq K \langle \nu_{s-}^{K}, \mathbf{1}\rangle \}} \\
& \hspace*{25mm} \mathds{1}_{ \{z \leq b_{K}(H_{i}(\nu_{s-}^{K})) p(H_{i}(\nu_{s-}^{K})) \pi^{K}(H_{i}(\nu_{s-}^{K}), h, k)m_{K}(H_{i}(\nu_{s-}^{K}),h)\}} N_{{\rm m}}({\rm d}s, {\rm d}i, {\rm d}h, {\rm d} k, {\rm d}z )
\\
& \hspace*{5mm} + \int_{[0,t]} \int_{\mathbb{N}} \int_{ \mathbb{R}_{+}}
 \left(f \left(\nu_{s-}^{K} - \frac{\delta_{H_{i}(\nu_{s-}^{K})}}{K} \right) - f(\nu_{
s-}^{K}) \right)
  \mathds{1}_{\{ i \leq K \langle \nu_{s-}^{K}, \mathbf{1}\rangle \}} \\
& \hspace*{25mm} \mathds{1}_{ \{z \leq d_{K}(H_{i}(\nu_{s-}^{K})) + K\int_{\mathcal{X}} c_{K}(H_{i}(\nu_{s-}^{K}),y) \nu_{s-}^{K}({\rm d} y) \}} N_{{\rm d}}({\rm d}s, {\rm d}i, {\rm d}s ).
\end{align*}

\noindent Taking expectations, we obtain that
\begin{align*}
\mathbb{E}[f(\nu_{t}^{K})] &= \mathbb{E}[f(\nu_{0}^{K})]  \\
& \hspace*{5mm} +
 \int_{0}^{t}
\mathbb{E}\left[ \sum_{i =1}^{K \langle \nu_{s}^{K}, \mathbf{1} \rangle } \left \{ \sum_{k=1}^{\infty}
 b_{K}(H_{i}(\nu_{s-}^{K})) (1-p(H_{i}(\nu_{s-}^{K}))) \pi^{K}(H_{i}(\nu_{s-}^{K}), H_{i}(\nu_{s-}^{K}), k) \right. \right. \\
 & \hspace*{25mm} \times \left(f \left(\nu_{s-}^{K} + k \frac{\delta_{H_{i}(\nu_{s-}^{K})}}{K} \right) - f(\nu_{
s-}^{K}) \right)
\\
& \hspace*{5mm}  \phantom{AAA}+
\sum_{k =1}^{\infty}b_{K}(H_{i}(\nu_{s-}^{K})) p(H_{i}(\nu_{s-}^{K})) \int_{\mathcal{X}} \pi^{K}(H_{i}(\nu_{s-}^{K}), h, k)m_{K}(H_{i}(\nu_{s-}^{K}),h)  \\
& \hspace*{25mm} \times  \left(f \left(\nu_{s-}^{K} + k \frac{\delta_{ h}}{K} \right) - f(\nu_{
s-}^{K}) \right) \bar{m}({\rm d}h)
\\
& \hspace*{5mm} \left. \left. \phantom{AAA}+
 \left( d_{K}(H_{i}(\nu_{s-}^{K})) + K\int_{\mathcal{X}} c_{K}(H_{i}(\nu_{s-}^{K}),y) \nu_{s-}^{K}({\rm d} y) \right) \left(f \left(\nu_{s-}^{K} - \frac{\delta_{H_{i}(\nu_{s-}^{K})}}{K} \right) - f(\nu_{
s-}^{K}) \right) \right \} \right ] {\rm d}s.
\end{align*}

Recalling Notation \ref{not1} and that we are integrating with respect the Lebesgue measure, we have that
\begin{align*}
\mathbb{E}[f(\nu_{t}^{K})] & = \mathbb{E}[f(\nu_{0}^{K})]  \\
& \hspace*{5mm} +
 \int_{0}^{t}
\mathbb{E}\bigg[ K \int_{\mathcal{X}}  \sum_{k=1}^{\infty}
 b_{K}(x) (1-p(x)) \pi^{K}(x, x, k)    \left(f \left(\nu_{s}^{K} + k \frac{\delta_{x}}{K} \right) - f(\nu_{
s}^{K}) \right) \nu_{s}^{K}({\rm d} x) 
\\
& \hspace*{5mm}  +
 K \int_{\mathcal{X}} \sum_{k =1}^{\infty}b_{K}(x) p(x) \int_{\mathcal{X}} \pi^{K}(x, h, k)m_{K}(x,h)  \left(f \left(\nu_{s}^{K} + k \frac{\delta_{ h}}{K} \right) - f(\nu_{
s}^{K}) \right) \bar{m}({\rm d}h) \nu_{s}^{K}({\rm d} x)
\\
& \hspace*{5mm}  +
 K \int_{\mathcal{X}} \left( d_{K}(x) + K\int_{\mathcal{X}} c_{K}(x,y) \nu_{s}^{K}({\rm d} y) \right) \left(f \left(\nu_{s}^{K} - \frac{\delta_{x}}{K} \right) - f(\nu_{s}^{K}) \right) \nu_{s}^{K}({\rm d} x)  \bigg]{\rm d}s.
\end{align*}

Since $\mathscr{L}^{K}(f(\nu_{0}^{K})) =\frac{{\rm d}}{{\rm d} t} \mathbb{E}[f(\nu_{t}^{K})] \big|_{t=0}$, Assumption \ref{assum1} as well as the conditions  (\ref{eq3}) for $q=1$, which is a consequence of Assumption \ref{assum2},  and (\ref{eq8})  lead to (\ref{eq1}) by differentiating the previous expression. Moreover, it should be now clear that the law of $\nu^{K}$ does not depend on the chosen order.
\end{proof}

\begin{remark} \label{remfunc}
We point out that any function $f \in B(\mathcal{M}(\mathcal{X}), \mathbb{R})$ of the form
\begin{eqnarray*}
f(\mu) = \sum_{i=1}^{n} \theta_{i} e^{- \lambda_{i} \langle \mu, \phi_{i} \rangle}, \hspace*{5mm} n \in \mathbb{N},
\end{eqnarray*}

\noindent where $\mu \in \mathcal{M}(\mathcal{X})$, $\lambda_{i}  \in \mathbb{R}_{+}$, $\theta_{i} \in \mathbb{R}$ and $\phi_{i} \in B(\mathcal{X}, \mathbb{R}_{+})$ for $i = 1, \dots, n$, satisfies condition (\ref{eq8}). More precisely, for $u \in [0,1]$,
\begin{eqnarray} \label{keyineq}
\int_{\mathcal{X}}|f(\mu - u \delta_{x}) - f(\mu)| \mu({\rm d} x) & \leq & \sum_{i=1}^{n} |\theta_{i}| e^{-\lambda_{i}\langle \mu, \phi_{i} \rangle } \int_{\mathcal{X}} |e^{u\lambda \phi_{i}(x)}-1| \mu({\rm d}x ) \nonumber \\
& \leq &  u \sum_{i=1}^{n} C_{i}|\theta_{i}| \lambda_{i} \langle \mu, \phi_{i} \rangle e^{-\lambda_{i}\langle \mu, \phi_{i} \rangle } \nonumber \\
& \leq & u \sum_{i=1}^{n} C_{i}|\theta_{i}| \lambda_{i},
\end{eqnarray}

\noindent for some $C_{i} > 0$. We have used the inequality $|e^{x}-1|\leq |x|e^{|x|}$, for $x \in \mathbb{R}$, in order to obtain the second line. This class of functions clearly separates points in $\mathcal{M}(\mathcal{X})$.  It is not clear whether they are convergence determining or not. However, this will not be needed later on.

On the other hand, we point out that the class of functions which satisfy (\ref{eq8}) contains the functions $f  \in B(\mathcal{M}(\mathcal{X}), \mathbb{R})$ of the form
\begin{equation*}
f(\mu)= 
              e^{-\lambda \mu(\mathcal{X})} \int_{\mathcal{X}^{n}} \phi(x_{1}, \dots, x_{n}) \mu({\rm d}x_{1}) \cdots  \mu({\rm d}x_{n}), 
\end{equation*}
\noindent with $\lambda \in \mathbb{R}_{+}$, $n \in \mathbb{N} \cup \{0\}$, $\phi \in C_{b}(\mathcal{X}^{n}, \mathbb{R})$, and where $\mathcal{X}^{n}$ denotes the $n$-fold space of $\mathcal{X}$. These functions clearly separate points in $\mathcal{M}(\mathcal{X})$ and moreover, they are also convergence determining (observe that the class is closed under multiplication and apply, for example, \cite[Theorem 2.7]{Wolf2013}).
\end{remark}

\subsection{Martingale properties}

We finally give some martingale properties of the process $\nu^{K} = (\nu_{t}^{K}, t \geq 0)$ , which are the key point of our approach. Recall that $\pi^{K}=(\pi^{K}(x,h) = (\pi^{K}(x,h,k), k \geq 1), x,h \in \mathcal{X})$ is the offspring distribution associated to the model described in Section \ref{model}. Let
$g^{K}(x, h, \cdot)$ be the associated probability generating function for $x, h \in \mathcal{X}$, that is,
\begin{eqnarray} \label{eq22}
g^{K}(x,h,z) = \sum_{k =1}^{\infty} \pi^{K}(x,h,k) z^{k}, \hspace*{5mm} |z| \leq 1.
\end{eqnarray}
We consider the mean value of the offspring distribution $\pi^{K}$,
\begin{eqnarray} \label{eq25}
\kappa^{K}(x,h) = \sum_{k =1}^{\infty} k \pi^{K}(x,h, k), \hspace*{6mm} \text{for} \, \, x,h \in \mathcal{X}.
\end{eqnarray}

\begin{theorem} \label{teo2}
Suppose that Assumption \ref{assum1} and \ref{assum2} are fulfilled.
\begin{itemize}
\item[(a)] For all functions $f \in  B(\mathcal{M}^{K}(\mathcal{X}), \mathbb{R})$ that satisfy (\ref{eq8}) and such that for some constant $C \geq 0$ (that may depend on $K$) and $\nu \in \mathcal{M}^{K}(\mathcal{X})$ we have $|f(\nu)| + |\mathscr{L}^{K}(f(\nu))| \leq C (1+\langle \nu, \mathbf{1} \rangle)$, the process  $M^{K}(f) = (M_{t}^{K}(f),t \geq 0)$ given by
\begin{eqnarray*}
M^{K}_{t}(f) = f(\nu_{t}^{K}) - f(\nu_{0}^{K}) - \int_{0}^{t} \mathscr{L}^{K}(f(\nu_{s}^{K})) {\rm d} s
\end{eqnarray*}

is a c\`adl\`ag $(\mathcal{F}_{t})_{t \geq 0}$-martingale starting from $0$.

\item[(b)] For any function $\phi \in C_{b}(\mathcal{X}, \mathbb{R}_{+})$, $E^{K}(\phi) = (E_{t}^{K}(\phi),t \geq 0)$ given by
\begin{align} \label{eq30}
 E^{K}_{t}( \phi) & = \exp(-  \langle \nu_{t}^{K}, \phi \rangle) - \exp(-  \langle \nu_{0}^{K},  \phi \rangle) \nonumber
\\
& \hspace*{3mm} - K\int_{0}^{t} \int_{\hat{\mathcal{X}}}
  \left( b_{K}(x) \left( g^{K}\left(x,x, e^{-\frac{ \phi(x)}{K}} \right) -1 \right)
+   d_{K}(x)\left( e^{\frac{\phi(x)}{K}} -1 \right) \right)
  \exp(-  \langle \nu_{s}^{K},  \phi \rangle) \nu_{s}^{K}({\rm d} x)  {\rm d} s \nonumber  \\
& \hspace*{3mm}  +
K \int_{0}^{t} \int_{\hat{\mathcal{X}}}  b_{K}(x) p(x) \Bigg( \int_{\hat{\mathcal{X}}} \bigg( g^{K} \left(x,x, e^{-\frac{\phi(x)}{K}} \right) \nonumber \\
 & \hspace*{50mm} - g^{K} \left(x,h, e^{-\frac{\phi(h)}{K}} \right) \bigg) m_{K}(x,h) \bar{m}({\rm d}h)  \Bigg) \exp(- \langle \nu_{s}^{K},  \phi \rangle)   \nu_{s}^{K}({\rm d} x) {\rm d} s \nonumber
\\
& \hspace*{3mm} -
 K^{2} \int_{0}^{t} \int_{\hat{\mathcal{X}}}  \left( \int_{\hat{\mathcal{X}}} c_{K}(x,y) \nu_{s}^{K}({\rm d} y) \right) \left( e^{\frac{\phi(x)}{K}} -1 \right) \exp(-  \langle \nu_{s}^{K},  \phi \rangle) \nu_{s}^{K}({\rm d} x)  {\rm d} s,
\end{align}
is a c\`adl\`ag $(\mathcal{F}_{t})_{t \geq 0}$-martingale starting from $0$.
\end{itemize}
\end{theorem}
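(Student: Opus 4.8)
Both parts will be handled together, since (b) is the special case of (a) obtained with the exponential test function $f(\mu)=\exp(-\langle\mu,\phi\rangle)$; so the plan is to prove (a) and then specialise. \textbf{Part (a).} The idea is a Dynkin-type argument run pathwise on the Poisson representation of Definition \ref{def1}. Since $\nu^{K}$ is piecewise constant with jumps driven by $N_{\rm c},N_{\rm m},N_{\rm d}$, one has $f(\nu_{t}^{K})-f(\nu_{0}^{K})=\sum_{s\le t}(f(\nu_{s}^{K})-f(\nu_{s-}^{K}))$, and this sum is a triple of Poisson stochastic integrals whose integrands are the jump increments of $f$ (precisely the display in the proof of Proposition \ref{Prop1}). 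Compensating each integral against its intensity measure isolates, on the one hand, the predictable finite-variation part, which by the computation in the proof of Proposition \ref{Prop1} --- now read $\omega$ by $\omega$ rather than in expectation --- equals $\int_{0}^{t}\mathscr{L}^{K}(f(\nu_{s}^{K}))\,{\rm d}s$, and, on the other hand, the remainder $M^{K}(f)$, a sum of three compensated Poisson integrals, hence a local martingale from $0$. To upgrade this to a genuine martingale I would check that the integrands are integrable against the compensators on $[0,T]$: for the clonal and mutation parts the event rate is $O(K\langle\nu_{s}^{K},\mathbf 1\rangle)$ and the increments are bounded by $2\|f\|_{\infty}$; for the natural-death part the rate is $O(K\langle\nu_{s}^{K},\mathbf 1\rangle)$ and $d_{K}$ is bounded; the only dangerous term is the competition death, whose a priori rate is $O(K^{2}\langle\nu_{s}^{K},\mathbf 1\rangle^{2})$, but hypothesis (\ref{eq8}) bounds $\int_{\mathcal X}|f(\nu-\delta_{x}/K)-f(\nu)|\,\nu({\rm d}x)$ by $C/K$, so after extracting $\overline c\langle\nu_{s}^{K},\mathbf 1\rangle$ from the competition kernel this contribution is again $O(K\langle\nu_{s}^{K},\mathbf 1\rangle)$. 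Combined with Theorem \ref{teo1}(b) for $q=1$ (available under Assumptions \ref{assum1} and \ref{assum2}), this gives $\mathbb E[\int_{0}^{T}(1+\langle\nu_{s}^{K},\mathbf 1\rangle)\,{\rm d}s]<\infty$ (which in passing shows the growth bound on $|f|+|\mathscr{L}^{K}f|$ is automatically met), whence $M^{K}(f)$ is a true c\`adl\`ag martingale starting from $0$.

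\textbf{Part (b).} Take $f(\mu)=\exp(-\langle\mu,\phi\rangle)$ for $\phi\in C_{b}(\mathcal X,\mathbb R_{+})\subset B(\mathcal X,\mathbb R_{+})$; this $f$ is bounded by $1$ and satisfies (\ref{eq8}) by Remark \ref{remfunc} (take $n=1$ in (\ref{keyineq})). The heart of the argument is to evaluate $\mathscr{L}^{K}f$ from (\ref{eq1}) and recognise the integrand of (\ref{eq30}): using $f(\nu+k\delta_{x}/K)-f(\nu)=e^{-\langle\nu,\phi\rangle}(e^{-k\phi(x)/K}-1)$ and $f(\nu-\delta_{x}/K)-f(\nu)=e^{-\langle\nu,\phi\rangle}(e^{\phi(x)/K}-1)$, summing the birth increments against $\pi^{K}(x,h,\cdot)$ produces the generating function $g^{K}(x,h,e^{-\phi(h)/K})-1$ of (\ref{eq22}); writing the clonal rate $b_{K}(x)(1-p(x))=b_{K}(x)-b_{K}(x)p(x)$ splits the clonal contribution into a full-rate clonal term (line 2 of (\ref{eq30})) and a $-b_{K}(x)p(x)$ clonal term which is placed, with its sign, next to the mutation contribution (line 3), while the natural-death and competition parts furnish the remaining two lines. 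Matching signs line by line gives $E^{K}_{t}(\phi)=M^{K}_{t}(f)$. It then remains to check the hypotheses of (a): boundedness and (\ref{eq8}) are already established, and for the competition term of $\mathscr{L}^{K}f$, namely $K^{2}e^{-\langle\nu,\phi\rangle}\int_{\mathcal X}(\int_{\mathcal X}c_{K}(x,y)\nu({\rm d}y))(e^{\phi(x)/K}-1)\nu({\rm d}x)$, one uses $e^{\phi(x)/K}-1\le(\phi(x)/K)e^{\|\phi\|_{\infty}/K}$ followed by the elementary bound $\langle\nu,\phi\rangle e^{-\langle\nu,\phi\rangle}\le 1/e$ (the mechanism already present in (\ref{keyineq})) to see this term is $O(K\langle\nu,\mathbf 1\rangle)$, while the birth and mutation contributions are $O(\langle\nu,\mathbf 1\rangle)$ since $|g^{K}(x,h,z)-1|\le\kappa_{1}(1-z)$ for $z\in[0,1]$. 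Hence (a) applies and (b) follows.

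\textbf{Expected main obstacle.} The recurring delicate point is the competition term: its compensator rate is quadratic in the total mass and carries the explicit factor $K^{2}$, so naively the associated compensated Poisson integral need not be a true martingale and $\mathscr{L}^{K}f$ need not be integrable in time. In part (a) it is precisely hypothesis (\ref{eq8}) that repairs this (turning the absolute change of $f$ under a death event into something of order $1/K$), and in part (b) one additionally leans on the exponential damping $\langle\mu,\phi\rangle e^{-\langle\mu,\phi\rangle}\le 1/e$ to keep the growth linear for every $\phi\in C_{b}(\mathcal X,\mathbb R_{+})$ --- so that no lower bound on $\phi$ and no second-moment assumption are needed. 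A secondary, purely bookkeeping, difficulty is the sign tracking in the regrouping that identifies $\mathscr{L}^{K}f$ with the five-line expression (\ref{eq30}).
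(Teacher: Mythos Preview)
Your proposal is correct and takes essentially the same approach as the paper, which is even terser: for (a) it directly invokes Protter's Theorem~I.51 after observing that the growth hypothesis $|f|+|\mathscr{L}^K f|\le C(1+\langle\nu,\mathbf 1\rangle)$ together with the first-moment estimate of Theorem~\ref{teo1}(b) (via Proposition~\ref{Prop1}) yields $\mathbb{E}[\sup_{s\le t}|M^K_s(f)|]<\infty$, and for (b) it simply specialises to $f(\nu)=\exp(-\langle\nu,\phi\rangle)$. Your compensator-by-compensator verification and the explicit regrouping to obtain (\ref{eq30}) are an equivalent route to the same conclusion, and your side observation that the linear growth bound is in fact automatic for bounded $f$ satisfying~(\ref{eq8}) is correct.
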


\begin{proof}
Point (a) follows from \cite[Theorem I.51]{Pro2005} by showing
\begin{eqnarray*}
\mathbb{E} \big[  \sup_{s \in [0, t]} \left |M_{s}^{K}(f) \right | \big]   <  \infty, \hspace*{3mm} \text{for every} \hspace*{2mm} t \geq 0.
\end{eqnarray*}

\noindent This is a consequence of the assumption on $f$, Theorem \ref{teo1} and Proposition \ref{Prop1}. The point (b) is a consequence of (a) with $f(\nu) = \exp(-  \langle  \nu, \phi \rangle)$ with $\nu \in \mathcal{M}^{K}(\mathcal{X})$.
\end{proof}

\section{The superprocess limit} \label{mainresult}

In this section, we investigate the limit, when the system size $K$ increases to $+ \infty$, of the interactive particle system described in Section \ref{model}. As we will see, this will lead to a random measure-valued process. In an obvious way, we regard the previous interactive particle system as a process with state space $\mathcal{M}^{K}(\hat{\mathcal{X}}) \subset \mathcal{M}(\hat{\mathcal{X}})$. We denote by $g^{K}(x,h, \cdot)$ and $\kappa^{K}(x,h)$ the probability generating function and mean, of the offspring distribution $\pi^K(x,h)$, for $x,h \in \hat{\mathcal{X}}$, defined as in (\ref{eq22}) and (\ref{eq25}), respectively. Similarly, we write $b_{K}(x)$ and $d_{K}(x)$, for $x \in \hat{\mathcal{X}}$. For $\alpha \in (0,1]$, we also consider the $1+ \alpha$ moment of the offspring distribution $\pi^{K}$,
\begin{eqnarray*} 
\kappa^{K}_{1+\alpha}(x,h) = \sum_{k =1}^{\infty} k^{1+\alpha} \pi^{K}(x,h, k), \hspace*{6mm} \text{for} \, \, x,h \in \hat{\mathcal{X}}.
\end{eqnarray*}

\noindent Note that if $\kappa^{K}_{1+\alpha}(x,h)< \infty$ for $x,h \in \hat{\mathcal{X}}$ then also $\kappa^{K}(x,h) < \infty$ for $x,h \in \hat{\mathcal{X}}$.

We consider the following hypotheses.

\begin{assumption} \label{assum3}
The population parameters satisfy:
\begin{itemize}

\item[(i)] $c_{K}(x,y) = K^{-1}c(x,y)$ for all $x,y \in \hat{\mathcal{X}}$ and $c \in C_{\partial}(\mathcal{X} \times \mathcal{X}, \mathbb{R}_{+})$.

\item[(ii)] There exists $\alpha \in (0,1]$ such that the $(1+ \alpha)$-th moment of the offspring distribution is uniformly bounded, i.e.,
\begin{eqnarray*}
 \sup_{x,h \in \hat{\mathcal{X}}}\kappa^{K}_{1+\alpha}(x,h) = \sup_{x,h \in \hat{\mathcal{X}}} \sum_{k =1}^{\infty} k^{1+\alpha} \pi^{K}(x, h, k) < + \infty.
\end{eqnarray*}

\item[(iii)] $\sup_{K}  \sup_{x \in \hat{\mathcal{X}}} \left( | \kappa^{K}(x,x) b_{K}(x) - d_{K}(x)| + K^{-\alpha}| \kappa^{K}_{1+\alpha}(x,x) b_{K}(x) + d_{K}(x)|\right) < + \infty$.


\item[(iv)] For $z \geq 0$ and $x \in \hat{\mathcal{X}}$, we define
\begin{eqnarray*}
\psi^{K}(x,z) = b_{K}(x)(g^{K}(x,x, e^{-z}) -1) + d_{K}(x)( e^{z} -1).
\end{eqnarray*}

The sequence $(K\psi^{K}(x,z/K))_{K}$ converges, uniformly on $\hat{\mathcal{X}} \times [0, a]$ for each $a \geq 0$, to
\begin{eqnarray*}
\psi(x, z) =   -b(x) z + \sigma(x)z^{2} + \int_{0}^{\infty} (e^{-zu} -1 +zu) \Pi(x, {\rm d}u),
\end{eqnarray*}

\noindent where $b \in C_{\partial}(\mathcal{X}, \mathbb{R})$, $\sigma \in C_{\partial}(\mathcal{X}, \mathbb{R}_{+})$  and $\Pi(x, \cdot)$ is a kernel from $\hat{\mathcal{X}}$ to $(0, \infty)$ such that
\begin{eqnarray*}
\sup_{x \in  \hat{\mathcal{X}}} \int_{0}^{\infty} (u \wedge u^{2}) \Pi(x, {\rm d}u) < + \infty, \hspace*{5mm} \text{and} \hspace*{5mm} \int_{B} (u \wedge u^{2}) \Pi(x, {\rm d}u) \in C_{\partial}(\mathcal{X}, \mathbb{R}_{+}),
\end{eqnarray*}

\noindent for each $B \in \mathbb{B}(\mathbb{R}_{+})$.

\item[(v)] $p \in C_{\partial}(\mathcal{X}, [0,1])$.

\item[(vi)] For $x \in \hat{\mathcal{X}}$, the mutation kernel $m_{K}(x, {\rm d} h)$ is absolutely continuous with respect to a $\sigma$-finite probability measure $\bar{m}$ on $\hat{\mathcal{X}}$ with density $m_{K}(x, h)$. Moreover, 
\begin{eqnarray*}
 \sup_{x \in \hat{\mathcal{X}}} \left|  b_{K}(x) \int_{ \hat{\mathcal{X}}}( \kappa^{K}(x,h) - \kappa^{K}(x,x)) m_{K}(x,h)  \bar{m}({\rm d}h) \right| < \infty,\\
\end{eqnarray*}
\noindent and
\begin{eqnarray*}
  \sup_{x \in \hat{\mathcal{X}}} \left| K^{-\alpha} b_{K}(x) \int_{ \hat{\mathcal{X}}}( \kappa^{K}_{1+\alpha}(x,h) - \kappa^{K}_{1+\alpha}(x,x)) m_{K}(x,h)  \bar{m}({\rm d}h) \right| < \infty.
\end{eqnarray*}

\item[(vii)] There is a bounded generator $A$ of a Feller semi-group on $C_{b}(\hat{\mathcal{X}}, \mathbb{R})$ with domain $\mathscr{D}(A)$, dense in $C_{b}(\hat{\mathcal{X}} , \mathbb{R})$, such that for all $\phi \in \mathscr{D}(A)$
\begin{eqnarray*}
\lim_{K \rightarrow \infty} \sup_{x \in \hat{\mathcal{X}}} \left|  K  b_{K}(x)  \int_{\hat{\mathcal{X}}} \left( g^{K}\left(x,x, e^{-  \frac{ \phi(x)}{K}} \right) -  g^{K}\left(x,h, e^{-\frac{ \phi(h)}{K}} \right)\right) m_{K}(x,h) \bar{m}({\rm d}h) - A \phi(x) \right | = 0.
\end{eqnarray*}
\end{itemize}
\end{assumption}

The motivation behind Assumption \ref{assum3} (iv) comes from the theory of superprocesses. More precisely, from the approximation of branching particle systems that leads to measure-valued branching processes with local branching mechanisms; see for example \cite[Section 4.4]{Dawson1991} or \cite[Proposition 4.3]{Li2011}.

\begin{remark}
A classical choice for the competition function in Assumption \ref{assum3} (i) is $c\equiv 1$. This corresponds to density dependence involving the total population size known as the ``mean field case'' or the ``logistic case''.
\end{remark}

\begin{remark} \label{rem1}
Typically, the choice of the functions $b_{K}$ and $d_{K}$ will depend on the offspring distribution. We illustrate this with two examples:
\begin{itemize}
\item[(a)] {\bf Single offspring distribution.} The reproduction law satisfies
\begin{eqnarray*}
g^{K}(x, h, z) = z, \hspace*{5mm} x,h \in \hat{\mathcal{X}} \hspace*{5mm} \text{and}  \hspace*{5mm} |z| \leq 1.
\end{eqnarray*}

In particular, choosing $b_{K}$ and $d_{K}$ proportional to $K$ yields the case studied by Champagnat, et al. \cite{Cha2008}. More precisely,
Champagnat, et al. \cite{Cha2008} considered
\begin{eqnarray*}
b_{K}(x) = K \sigma(x) + b(x) \hspace*{5mm} \text{and}  \hspace*{5mm} d_{K}(x) = K \sigma(x),
\end{eqnarray*}

\noindent for $x \in \hat{\mathcal{X}}$ and where $b,\sigma \in C_{\partial}(\mathcal{X}, \mathbb{R}_{+})$. In particular,
the sequence $(K\psi^{K}(x,z/K))_{K}$ converges, uniformly on $\hat{\mathcal{X}} \times [0, a]$ for each $a \geq 0$, to
\begin{eqnarray*}
\psi(x, z) =  - b(x) z + \sigma(x)z^{2}.
\end{eqnarray*}

Note also that the parameters in this example satisfy Assumption \ref{assum3} (ii), (iii) and (vi) with $\alpha =1$. This model has been also studied in \cite{Jou},\cite{Me2012} and \cite{MeTra2012} in a similar setting.
Finally, let us mention that Champagnat, et al. \cite{Cha2008} also studied the case of single offspring distribution where the natural birth and death rates are proportional to $K^{\eta}$, for some $\eta \in (0,1)$. In this scenario, they showed that the limit process is deterministic and described by a partial differential equation. This follows from the fact that the variance vanishes in the limit.

\item[(b)] {\bf $\beta$-stable offspring distribution.} The reproduction law satisfies
\begin{eqnarray*}
g^{K}(x, h, z) = \frac{1}{\beta}(1-z)^{1+\beta} + \frac{1+\beta}{\beta}z - \frac{1}{\beta}, \hspace*{5mm} x,h \in \hat{\mathcal{X}},  \hspace*{5mm} |z| \leq 1, \hspace*{5mm} \text{and} \hspace*{3mm} \beta \in (0, 1].
\end{eqnarray*}

This type of offspring distribution has been used in order to get convergence of branching particle systems to the so-called $(\alpha, d, \beta)$-superprocess (see for example \cite[Section 4.5]{Dawson1991}). In this case, in order to obtain a nontrivial limit we must choose $b_{K}$ and $d_{K}$ proportional to $K^{\beta}$. Moreover, Assumptions \ref{assum3} (ii), (iii), and (vi) have to hold with $\alpha =\beta$. Clearly, the variance of the offspring distribution is infinite and therefore the limiting process can no longer have finite second moments. On the other hand, as in the example (a), we expect that the limit process is deterministic and described by a partial differential equation if the offspring distribution is $\beta$-stable and $b_{K}$, $d_{K}$ are proportional to $K^{\eta^{\prime}}$, for some $\eta^{\prime} \in (0,\beta)$.

\item[(c)] Consider a function $\Lambda_{K} \in B(\hat{\mathcal{X}} \times \hat{\mathcal{X}}, \mathbb{R}_{+})$ such that $\Lambda_{K}(x,x) = 0$, for $x \in \hat{\mathcal{X}}$. Suppose that the reproduction law satisfies
\begin{eqnarray*}
g^{K}(x,h,z) = z \exp(- \Lambda_{K}(x,h) (1-z) ), \hspace*{5mm} x,h \in \hat{\mathcal{X}} \hspace*{5mm} \text{and}  \hspace*{5mm} |z| \leq 1.
\end{eqnarray*}
By considering $b_{K}(x) = K \sigma(x) + b(x)$ and $d_{K}(x) = K \sigma(x)$,  for $x \in \hat{\mathcal{X}}$ and where $b,\sigma \in C_{\partial}(\mathcal{X}, \mathbb{R}_{+})$, one can check that Assumption \ref{assum3} (iv) is fulfilled. This generating function corresponds to a random variable $X_{x,h} + 1$, where $X_{x,h}$ is distributed according to a Poisson random variable of parameter $\Lambda_{K}(x,h)$. For instance, we could take $\mathcal{X} = [x_{1}, x_{2}]$, with $-\infty < x_{1} < x_{2} < + \infty$, and $\Lambda_{K}(x,h) = |x-h|$. 
In this case Assumptions \ref{assum3} (ii) and (iii) are satisfied with $\alpha =1$. In general, this depends on the choice of $\Lambda_{K}$.

\item[(d)] Let $b_{K}, d_{K} \in C_{\partial}(\mathcal{X}, \mathbb{R}_{+})$ such that $b_{K} \rightarrow b$ and $d_{K} \rightarrow d$, as $K \rightarrow \infty$, (uniformly on $\hat{\mathcal{X}}$) where $b, d \in C_{\partial}(\mathcal{X}, \mathbb{R}_{+})$. Suppose that  Assumption \ref{assum3} (ii) is satisfied and that for each $a \geq 0$,
\begin{eqnarray*}
b_{K}(x)\left( g^{K}\left(x,x, e^{-\frac{ z}{K}} \right) -1 \right) = - \frac{z}{K} b_{K}(x) \kappa^{K}(x,x)  + o\left(\frac{1}{K} \right),
\end{eqnarray*}

\noindent for $x \in \hat{\mathcal{X}}$ and $z \in [ 0, a]$ (the small $o$ term is uniform on $\hat{\mathcal{X}} \times [0, a]$). Then $(K\psi^{K}(x,z/K))_{K}$ converges, uniformly on $\hat{\mathcal{X}} \times [0, a]$ for each $a \geq 0$,
\begin{eqnarray*}
\psi(x,z) =  (b(x) \kappa(x)-d(x)) z
\end{eqnarray*}
\noindent if $\kappa^{K}(x,x) \rightarrow \kappa(x) \in C_{\partial}( \mathcal{X}, \mathbb{R}_{+})$, as $K \rightarrow \infty$, uniformly on $\hat{\mathcal{X}}$. In this case, one may expect to obtain a deterministic limiting process described by a partial differential equation as \cite[Theorem 4.2]{Cha2008} or \cite[Theorem 5.3]{Four2004}.
\end{itemize}
\end{remark}

Let us now state our main theorem. For $0 < T < + \infty$ and $\mu^{K} \in \mathcal{M}^{K}(\mathcal{X})$, let us call $\mathbf{Q}^{K} = \mathbf{L}(\mu^{K})$ the law of the process $\nu^{K} = (\nu^{K}_{t}, t \in [0,T])$ such that $\mathbf{Q}^{K}(\nu^{K}_{0} = \mu^{K})=1$. We denote by $\mathbf{E}^{K}$ the expectation with respect $\mathbf{Q}^{K}$. We make a slight abuse of notation and denote by $\mathbf{1} = \mathds{1}_{\hat{\mathcal{X}}}$ the indicator function on the whole space $\hat{\mathcal{X}}$, unless we specify otherwise.

\begin{theorem} \label{teo3}
Suppose that Assumption \ref{assum3} is fulfilled. Assume also that there exists $\mu \in \mathcal{M}(\mathcal{X})$ (possibly random) such that
\begin{eqnarray*}
\lim_{K \rightarrow \infty} \nu_{0}^{K} = \mu
\end{eqnarray*}

\noindent in law for the weak topology on $\mathcal{M}(\mathcal{X})$ and that
\begin{eqnarray} \label{eq13}
\sup_{K} \mathbf{E}^{K}[\langle \nu_{0}^{K}, \mathbf{1} \rangle^{1+\alpha}] < \infty,
\end{eqnarray}

\noindent where $\alpha \in (0,1]$ is as in Assumption \ref{assum3} (ii). Then, for each $0 < T < + \infty$:
\begin{itemize}
\item[(a)] The sequence of
laws $(\mathbf{Q}^{K})_{K}$ is tight in $\mathcal{P}(\mathbb{D}([0,T], \mathcal{M}(\hat{\mathcal{X}})))$.

\item[(b)] Let $\mathbf{Q}_{\mu}$ be a limit point of $(\mathbf{Q}^{K})_{K}$. Then, the measure-valued process $\nu \in \mathbb{D}([0,T], \mathcal{M}(\hat{\mathcal{X}}))$, with law $\mathbf{Q}_{\mu}$ such that $\mathbf{Q}_{\mu}(\nu_{0} = \mu)=1$, satisfies the following conditions:
\begin{enumerate}
\item We have that
\begin{eqnarray*}
\sup_{t \in [0, T]} \mathbf{E}_{\mathbf{Q}_{\mu}}[\langle \nu_{t}, \mathbf{1} \rangle^{1+\alpha}] < + \infty.
\end{eqnarray*}

\item The measure-valued process $\nu \in \mathbb{D}([0,T], \mathcal{M}(\hat{\mathcal{X}}))$, or equivalently its law $\mathbf{Q}_{\mu}$, solves the following martingale problem: For any
\begin{eqnarray*}
\phi \in \mathscr {D}(\mathbf{M}) := C_{\partial}(\mathcal{X}, \mathbb{R}_{+})^{+} \cap \mathscr{D}(A)\end{eqnarray*}

\noindent the process $M(\phi) = (M_{t}(\phi), t \in [0,T])$ given by
\begin{align} \label{eq10}
M_{t}(\phi) & =  \langle \nu_{t}, \phi \rangle - \langle \nu_{0}, \phi \rangle - \int_{0}^{t} \int_{\hat{\mathcal{X}}} p(x) A \phi(x)  \nu_{s}({\rm d} x) {\rm d} s \nonumber \\
& \hspace*{10mm}  - \int_{0}^{t} \int_{\hat{\mathcal{X}}}  \phi(x)  \left( \ b(x) - \int_{\hat{\mathcal{X}}} c(x,y) \nu_{s}({\rm d}y) \right) \nu_{s}({\rm d}x){\rm d} s  \tag{$\mathbf{M}$}
\end{align}

\noindent is a $\mathbf{Q}_{\mu}$-martingale. Moreover, $M(\phi)$ admits the decomposition $M(\phi) = M^{\text{c}}(\phi) + M^{\text{d}}(\phi)$, where $M^{\text{c}}(\phi)$ is a continuous martingale with increasing process
\begin{eqnarray}
\label{eq:quadrativ-var}
 2 \int_{0}^{t} \int_{\hat{\mathcal{X}}} \sigma(x) \phi^{2}(x) \nu_{s}({\rm d}x) {\rm d}s,
\end{eqnarray}

\noindent and $M^{\rm d}(\phi)$ is a purely discontinuous martingale, i.e.
\begin{eqnarray}
\label{eq:discrete_mg}
M^{\rm d}_{t}(\phi) = \int_{0}^{t} \int_{\mathcal{M}(\hat{\mathcal{X}})}  \langle \mu , \phi \rangle \tilde{N}({\rm d}s, {\rm d}\mu),
\end{eqnarray}

\noindent where $\tilde{N}({\rm d}s, {\rm d}\mu)$ is the  compensated random measure of the optional random measure
$N({\rm d}s, {\rm d}\mu)$ on $[0, \infty) \times \mathcal{M}(\hat{\mathcal{X}})$ given by
\begin{eqnarray} \label{eq:N}
N({\rm d}s, {\rm d}\mu) = \sum_{s  > 0} \mathds{1}_{\{\Delta \nu_{s} \neq \mathbf{0}  \}} \delta_{(s, \Delta \nu_{s})} ({\rm d}s, {\rm d}\mu)
\end{eqnarray}
\noindent with $\Delta \nu_{s} = \nu_{s} - \nu_{s-} \in \mathcal{M}(\hat{\mathcal{X}})$ and $\mathbf{0}$ denoting the null measure.
We then have $\tilde{N}=N-\hat{N}$ with the compensator $\hat{N}({\rm d}s, {\rm d} \mu) = {\rm ds} \, \hat{n}(\nu_{s}, {\rm d} \mu)$ and  $\hat{n}(\nu_{s}, {\rm d} \mu)$ given by
\begin{eqnarray} \label{eq:n}
\int_{\mathcal{M}(\hat{\mathcal{X}})} f(\mu) \hat{n}(\nu_{s}, {\rm d} \mu) = \int_{\hat{\mathcal{X}}} \int_{0}^{\infty} f(u \delta_{x}) \Pi(x, {\rm d}u) \nu_{s}({\rm d}x) \hspace*{5mm} \text{for} \hspace*{5mm} f \in B(\mathcal{M}(\hat{\mathcal{X}}), \mathbb{R}),
\end{eqnarray}

\noindent and $\tilde{N}({\rm d}s, {\rm d}\mu)$ the corresponding compensated random measure.
\end{enumerate}
\end{itemize}
\end{theorem}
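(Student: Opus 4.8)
\emph{The plan.} I would follow the classical superprocess route — a $K$‑uniform moment bound, tightness via Aldous–Rebolledo, identification of limit points through the martingale characterisation — but run the whole argument through the \emph{bounded} exponential martingales $E^K(\phi)$ of Theorem~\ref{teo2}(b) rather than through the linear functionals, since the general offspring law means $\langle\nu^K,\phi\rangle$ need not be a square‑integrable semimartingale.

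\emph{Step 1 (uniform first moment, compact containment).} First I would sharpen Theorem~\ref{teo1}: computing the drift of $\langle\nu^K_t,\mathbf 1\rangle$ exactly, the $O(K)$ birth and death rates nearly cancel — by Assumption~\ref{assum3}(iii) the clonal balance $b_K\kappa^K(x,x)-d_K(x)$ is bounded uniformly in $K$, by the last display of \ref{assum3}(viii) the mutation‑induced mass creation $b_K(x)p(x)\int(\kappa^K(x,h)-\kappa^K(x,x))\,m_K(x,h)\bar m({\rm d}h)$ converges uniformly to $r(x)$, and by \ref{assum3}(i) the competition term equals $\langle\nu^K_s,c(x,\cdot)\rangle$ — so the drift of $\langle\nu^K_t,\mathbf 1\rangle$ is $\le C(1+\langle\nu^K_t,\mathbf 1\rangle)$ with $C$ independent of $K$. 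Gronwall together with (\ref{eq13}) then gives $\sup_K\mathbf E^K\big[\sup_{t\le T}\langle\nu^K_t,\mathbf 1\rangle\big]<\infty$. Since $\hat{\mathcal X}$ is compact, $\{\mu:\langle\mu,\mathbf 1\rangle\le R\}$ is weakly compact in $\mathcal M(\hat{\mathcal X})$, so Markov's inequality yields the compact containment $\sup_K\mathbf Q^K(\sup_{t\le T}\langle\nu^K_t,\mathbf 1\rangle>R)\le C/R$.

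\emph{Step 2 (tightness).} For $\phi\in\mathscr D(\mathbf M)$ one has $\phi\ge\varepsilon>0$, hence $\langle\mu,\mathbf 1\rangle^j e^{-\langle\mu,\phi\rangle}\le\langle\mu,\mathbf 1\rangle^j e^{-\varepsilon\langle\mu,\mathbf 1\rangle}\le C_{j,\varepsilon}$. Inspecting (\ref{eq30}) and using Assumption~\ref{assum3}(i),(ii),(iv),(v),(viii), each term of the compensator of $E^K(\phi)$ is a coefficient bounded uniformly in $(K,x)$ times $e^{-\langle\nu^K_s,\phi\rangle}\nu^K_s({\rm d}x)$ for the branching/mutation parts and times $\langle\nu^K_s,\mathbf 1\rangle e^{-\langle\nu^K_s,\phi\rangle}\nu^K_s({\rm d}x)$ for the competition part; so this compensator has density bounded uniformly in $K$ and $s$. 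A parallel computation, using $\sum_k\pi^K(x,x,k)(e^{-k\phi(x)/K}-1)^2=g^K(x,x,e^{-2\phi(x)/K})-2g^K(x,x,e^{-\phi(x)/K})+1$ and its relation to $K\psi^K(x,\cdot/K)$, shows the predictable quadratic variation of the martingale part of $E^K(\phi)$ also has uniformly bounded density. The Aldous–Rebolledo criterion then gives tightness of $(e^{-\langle\nu^K,\phi\rangle})_K$ in $\mathbb D([0,T],[0,1])$, and combining with Step~1 (restrict to $\{\sup_t\langle\nu^K_t,\mathbf 1\rangle\le R\}$, on which $y\mapsto-\log y$ is Lipschitz) gives tightness of $(\langle\nu^K,\phi\rangle)_K$ in $\mathbb D([0,T],\mathbb R)$. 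Picking a countable subfamily $\{\phi_i\}\subset\mathscr D(\mathbf M)$ separating for $\mathcal M(\hat{\mathcal X})$ (possible since $\mathscr D(A_1),\mathscr D(A_2)$ are dense in $C_b(\hat{\mathcal X},\mathbb R)$), the usual criterion for tightness of measure‑valued càdlàg processes (Jakubowski/Roelly) yields part (a).

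\emph{Step 3 (identification).} Let $\mathbf Q_\mu$ be a limit point, $\mathbf Q^{K_n}\Rightarrow\mathbf Q_\mu$, and take a Skorokhod coupling $\nu^{K_n}\to\nu$ a.s.; Fatou and Step~1 give part (b).1. Then I would pass to the limit in the bounded (hence uniformly integrable) martingale $E^{K_n}(\phi)$, $\phi\in\mathscr D(\mathbf M)$. The decisive convergences are: $Kb_K(x)(g^K(x,x,e^{-\phi(x)/K})-1)+Kd_K(x)(e^{\phi(x)/K}-1)=K\psi^K(x,\phi(x)/K)\to\psi(x,\phi(x))$ uniformly by \ref{assum3}(v); after a Taylor expansion whose remainder is controlled by \ref{assum3}(iv), the mutation bracket $Kb_K(x)p(x)\big[\int(g^K(x,h,e^{-\phi(h)/K})-1)m_K(x,h)\bar m({\rm d}h)-(g^K(x,x,e^{-\phi(x)/K})-1)\big]$ combines with the linear parts of the first two so that all $O(K)$ terms cancel (by \ref{assum3}(iii),(viii)) and the total converges uniformly to $-b(x)\phi(x)+\sigma(x)\phi^2(x)+\int_0^\infty(e^{-\phi(x)u}-1+\phi(x)u)\Pi(x,{\rm d}u)-p(x)(A_1\phi(x)+A_2\phi(x)+r(x)\phi(x))$; and $K^2\langle\nu^K_s,c(x,\cdot)\rangle(e^{\phi(x)/K}-1)\to\phi(x)\langle\nu_s,c(x,\cdot)\rangle$ by \ref{assum3}(i). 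With $\nu^{K_n}_s\to\nu_s$ at a.e.\ $s$, weak continuity on the compact $\hat{\mathcal X}$ of $\nu\mapsto e^{-\langle\nu,\phi\rangle}$ and of the integrands, and the uniform domination from Step~2, dominated convergence passes the time integral to the limit; thus $e^{-\langle\nu_t,\phi\rangle}-e^{-\langle\nu_0,\phi\rangle}-\int_0^t e^{-\langle\nu_s,\phi\rangle}\langle\nu_s,\ \psi(\cdot,\phi)-p(A_1+A_2+r)\phi+\phi\,\langle\nu_s,c(\cdot,y)\rangle\rangle\,{\rm d}s$ is a $\mathbf Q_\mu$‑martingale. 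This is the generator, at $e^{-\langle\cdot,\phi\rangle}$, of a Dawson–Watanabe type process with motion $p(A_1+A_2)$, local branching mechanism $\psi$, linear correction $-pr$ and density‑dependent killing $\langle\nu_s,c(x,\cdot)\rangle$; reading off the canonical semimartingale characteristics of $\langle\nu_t,\phi\rangle$ from it (equivalently, applying the identity to $\lambda\phi$ and differentiating in $\lambda$, or invoking uniqueness of the Doob–Meyer decomposition) gives: the bounded‑variation part is the integral in (\ref{eq10}) (from the $b(x)\lambda$ part of $\psi$, the $r$‑correction, the motion, and the competition), the continuous local‑martingale part $M^{\rm c}(\phi)$ has increasing process $2\int_0^t\langle\nu_s,\sigma\phi^2\rangle\,{\rm d}s$ as in (\ref{eq:quadrativ-var}) (from $\sigma(x)\lambda^2$), and the purely discontinuous part $M^{\rm d}(\phi)$ is $\int_0^t\int\langle\mu,\phi\rangle\tilde N({\rm d}s,{\rm d}\mu)$ with compensator $\hat n(\nu_s,{\rm d}\mu)$ as in (\ref{eq:n}) (from $\int(e^{-\lambda u}-1+\lambda u)\Pi$), as stated in (\ref{eq:discrete_mg})–(\ref{eq:N}). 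Finiteness and the genuine (not merely local) martingale property follow from $\mathbf E_{\mathbf Q_\mu}[\int_0^T\langle\nu_s,\mathbf 1\rangle\,{\rm d}s]<\infty$ together with $\sup_x\int_0^\infty(u\wedge u^2)\Pi(x,{\rm d}u)<\infty$, which makes $\int_0^t\langle\nu_s,\sigma\phi^2\rangle\,{\rm d}s$ and $\int_0^t\int(\langle\mu,\phi\rangle\wedge\langle\mu,\phi\rangle^2)\hat n(\nu_s,{\rm d}\mu)\,{\rm d}s$ integrable, so after splitting jumps at size one $M^{\rm d}(\phi)$ is the sum of an $L^2$‑ and an $L^1$‑martingale. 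This gives part (b).2.

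\emph{Main obstacle.} The hard part, absent in the binary‑branching models of Fournier–Méléard and Champagnat et al., is the loss of second (and higher) moments caused by the general offspring law: the naive martingale $\langle\nu^K,\phi\rangle-\langle\nu^K_0,\phi\rangle-\int_0^t\mathscr L^K(\langle\nu^K_s,\phi\rangle)\,{\rm d}s$ need not lie in $L^2$ (its angle bracket involves $\sum_k k^2\pi^K$), and the competition makes its drift quadratic in the total mass. This forces both tightness and identification through the bounded exponential martingales $E^K(\phi)$, exploiting that $\phi$ is bounded away from $0$ so $e^{-\langle\cdot,\phi\rangle}$ absorbs polynomial mass factors; and it makes the bookkeeping of Step~3 delicate, since one must exhibit the cancellation of all $O(K)$ contributions and, inside $K\psi^K(x,\cdot/K)\to\psi(x,\cdot)$, track exactly which large reproduction events survive the rescaling by $1/K$ to become the jumps $u\delta_x$ governed by $\Pi$ in (\ref{eq:n}). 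A secondary technical point is securing a countable separating subfamily of $\mathscr D(\mathbf M)$ so that the measure‑valued tightness criterion can be invoked.
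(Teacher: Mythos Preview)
Your proposal is correct and follows essentially the same route as the paper: a $K$-uniform first-moment bound via the cancellations in Assumption~\ref{assum3}(iii),(viii), tightness through the bounded exponential functionals, and identification by passing the martingales $E^K(\phi)$ of Theorem~\ref{teo2}(b) to the limit and then extracting the semimartingale characteristics of $\langle\nu,\phi\rangle$ via the $\lambda\phi$-trick and uniqueness of the canonical decomposition. The only noteworthy difference is in the tightness step: the paper applies Jakubowski's criterion directly with the separating algebra $\mathscr E=\{\sum_i\theta_i e^{-\lambda_i\langle\cdot,\phi_i\rangle}\}$, bounding $|\mathscr L^K f|\le C\sup_t\langle\nu^K_t,\mathbf 1\rangle$ and invoking the Ethier--Kurtz martingale tightness criterion, which avoids your detour through Aldous--Rebolledo for $e^{-\langle\nu^K,\phi\rangle}$ followed by the logarithm on $\{\sup_t\langle\nu^K_t,\mathbf 1\rangle\le R\}$; both arguments are valid, but the paper's is slightly more direct since Jakubowski's criterion does not require recovering tightness of $\langle\nu^K,\phi\rangle$ itself.
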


Let us provide some specific examples of Assumption \ref{assum3} (vii), in order to show that a large class of dynamics can be included. 
\begin{remark} \label{exrem1}
Suppose that there exists a sequence $(a_{K})_{K}$ of positive real numbers such that $a_{K} \rightarrow \infty$ and $b_{K}(\cdot)/a_{K} \rightarrow 1$ (uniformly on $\hat{\mathcal{X}}$), as $K \rightarrow \infty$. For $\phi \in B(\hat{\mathcal{X}}, \mathbb{R})$, suppose also that 
\begin{eqnarray*}
 g^{K}\left(x,x, e^{-  \frac{ \phi(x)}{K}} \right) -  g^{K}\left(x,h, e^{-\frac{ \phi(h)}{K}} \right) = \frac{\phi(h)}{K} - \frac{\phi(x)}{K} + o\left( \frac{1}{K a_{K}}\right),
 \end{eqnarray*}
 \noindent for $x,h \in \hat{\mathcal{X}}$ and where the small $o$ term is uniform on $\hat{\mathcal{X}} \times \hat{\mathcal{X}}$. 
\begin{itemize}
\item[(a)] Consider the case when $\mathcal{X} = [x_{1}, x_{2}]$ with $-\infty < x_{1} < x_{2} < + \infty$. Then the mutation kernel $m_{K}(x, {\rm d}h)$ can be a Gaussian distribution (conditioned to be in $[x_{1}, x_{2}]$) with mean $x \in [x_{1}, x_{2}]$ and variance $\theta^{2}/a_{K}$ such that $(\theta^{2}/a_{K})b_{K}( \cdot) \rightarrow \theta^{2} \in \mathbb{R}_{+}$ (uniformly on $\mathcal{X}$), as $K \rightarrow \infty$. In this case, we have $A\phi = \frac{\theta^{2}}{2} \phi^{\prime \prime}$ for $\phi \in C^{2}_{b}([x_{1}, x_{2}], \mathbb{R})$ such that $\phi^{\prime}(x_{1}) = \phi^{\prime}(x_{2}) = 0$. If in addition $m_{K}(x, \cdot)$ has mean $x+\gamma_{K}$ such that $\gamma_{K} b_{K}(\cdot) \rightarrow \gamma \in \mathbb{R}$ (uniformly on $\mathcal{X}$), as $K \rightarrow \infty$, corresponding to a mutational drift, then $A\phi = \frac{\theta^{2}}{2} \phi^{\prime \prime} + \gamma \phi^{\prime}$.
\item[(b)] Consider the case when $\mathcal{X} = \mathbb{R}^{l}$, $l \geq 1$. For $x \in \hat{\mathcal{X}}$, let the mutation kernel $m_{K}(x, {\rm d}h)$ be the density of a random variable with mean vector $x$ and covariance matrix $\Sigma(x)/a_{K} = (\Sigma_{ij}(x)/a_{K}, 1 \leq i,j \leq l)$. Moreover, assume that the function $\Sigma$ is bounded and that the third moment of $m_{K}(x, {\rm d}h)$ is of order $1/a^{2}_{K}$ uniformly on $x \in \hat{\mathcal{X}}$. Then, the generator $A$ is  for $\phi \in C_{b}^{2}(\mathbb{R}^{l}, \mathbb{R})$ given by $A\phi(x) = \frac{1}{2} \sum_{i,j = 1}^{l} \Sigma_{ij}(x) \frac{\partial^{2} \phi(x)}{\partial x_{i} \partial x_{j}} $. For instance,
\begin{eqnarray*}
m_{K}(x, {\rm d}h) = \left( \frac{a_{K} }{2 \pi \theta^{2}(x)} \right)^{l/2} \exp \left( - \frac{ a_{K} | h -x |^{2}}{2 \theta^{2}(x)} \right) \mathds{1}_{\{ h \in \mathbb{R}^{l} \}} \, {\rm d} h,
\end{eqnarray*}
for $x \in \mathbb{R}^{l}$ and $\theta^{2}(x)$ positive and bounded.
\item[(c)] Take $\mathcal{X} = \mathbb{R}$, and for $x \in \hat{\mathcal{X}}$, let the mutation kernel $m_{K}(x, {\rm d}h)$ be the distribution of a Pareto random variable with index $\beta \in (1,2)$ divided by $K^{\eta/\beta}$, for $\eta \in (0,1]$. It has been proved by Jourdain et al. \cite{Jou} that for $\phi \in C_{b}^{2}(\mathbb{R}, \mathbb{R})$,
\begin{eqnarray*}
\lim_{K \rightarrow \infty} \sup_{x \in \hat{\mathcal{X}}}
\left | K^{\eta} \int_{\mathcal{X}}(\phi( h) - \phi(x))  m_{K}(x, {\rm d}h) - \frac{\beta}{2} D^{\beta}\phi(x) \right | = 0,
\end{eqnarray*}
\noindent where
\begin{eqnarray*}
D^{\beta}\phi(x) = \int_{\mathcal{X}} (\phi(x+h) - \phi(x)-h \phi^{\prime}(x) \mathds{1}_{\{ |h| \leq 1 \}}) \frac{{\rm d} h}{|h|^{1+\beta}}, \hspace*{6mm} x \in \hat{\mathcal{X}},
\end{eqnarray*}
\noindent is the fractional Laplacian of index $\beta$. Thus, in this example, Assumption \ref{assum3} (vii) is satisfied with $A = D^{\beta}$ as long as we take $a_{K} = K^{\eta}$.
\item[(d)] An interesting example is when $\mathcal{X} = \{x_{1}, x_{2} \}$ is a set of two traits. Consider the mutation kernel 
\begin{eqnarray*}
m_{K}(x,{\rm d}h) = \mathds{1}_{\{x=x_{1}\}} \left(q_{x_{1}}^{K} \delta_{x_{2}}({\rm d}h) 
+ (1-q_{x_{1}}^{K}) \delta_{x_{1}}({\rm d}h) \right)
+ \mathds{1}_{\{x=x_{2}\}} \left( q_{x_{2}}^{K} \delta_{x_{1}}({\rm d}h) + (1-q_{x_{2}}^{K}) \delta_{x_{2}}({\rm d}h) \right), 
\end{eqnarray*}
\noindent where $q_{x_{1}}^{K}, q_{x_{2}}^{K} \in (0,1)$ such that $a_{K}q_{x_{1}}^{K} \rightarrow q_{x_{1}}$ and $a_{K}q_{x_{2}}^{K} \rightarrow q_{x_{2}}$, as $K \rightarrow \infty$, with $q_{x_{1}}, q_{x_{2}} \in (0,1)$. Thus, Assumption \ref{assum3} (vii) is satisfied with
\begin{eqnarray*}
A \phi(x)= \mathds{1}_{\{x=x_{1}\}} q_{x_{1}} (\phi(x_{2}) - \phi(x_{1})) + \mathds{1}_{\{x=x_{2}\}} q_{x_{2}}(\phi(x_{1}) - \phi(x_{2})), 
\end{eqnarray*}
\noindent where $\phi \in C_{b}(\mathcal{X}, \mathbb{R})$. 
\end{itemize}
\end{remark}

\begin{remark}
 Let $\mathcal{X} = [x_{1}, x_{2}]$ with $-\infty < x_{1} < x_{2} < + \infty$ and suppose that that there exists a sequence $(a_{K})_{K}$ of positive real numbers such that $a_{K} \rightarrow \infty$, $a_{K} = O(K)$ and $b_{K}(\cdot)/a_{K} \rightarrow 1$ (uniformly on $\hat{\mathcal{X}}$), as $K \rightarrow \infty$. We consider a reproduction law that satisfies 
\begin{eqnarray*}
g^{K}(x,h,z) = z \exp(- a_{K}^{-1/2}|x-h| (1-z) ), \hspace*{5mm} x,h \in [x_{1}, x_{2}] \hspace*{5mm} \text{and}  \hspace*{5mm} |z| \leq 1;
\end{eqnarray*}
\noindent recall the example in Remark \ref{rem1} (c). For $\phi \in B(\hat{\mathcal{X}}, \mathbb{R})$, note that 
\begin{align*}
 g^{K}\left(x,x, e^{-  \frac{ \phi(x)}{K}} \right) -  g^{K}\left(x,h, e^{-\frac{ \phi(h)}{K}} \right)  & = \frac{\phi(h)}{K} - \frac{\phi(x)}{K} + \frac{\phi(x)^{2}}{2K^{2}} - \frac{\phi(h)^{2}}{2K^{2}}  \\
 & \hspace*{5mm} + \frac{\phi(h)}{K}a_{K}^{-1/2}|x-h|+ o\left( \frac{1}{K a_{K}}\right),
 \end{align*}
 \noindent for $x,h \in  [x_{1}, x_{2}]$ with a  big $O$ term that  is uniform on $[x_{1}, x_{2}] \times [x_{1}, x_{2}]$. Assume that the mutation kernel $m_{K}(x, {\rm d}h)$ is the Gaussian distribution (conditioned to be in $[x_{1}, x_{2}]$) with mean $x \in [x_{1}, x_{2}]$ and variance $\theta^{2}/a_{K}$ as in Remark \ref{exrem1} (a). Then an elementary computation shows that $A\phi = \frac{\theta^{2}}{2} \phi^{\prime \prime} +  \sqrt{\frac{2}{\pi}} \theta \phi$, for $\phi \in C^{2}_{b}([x_{1}, x_{2}], \mathbb{R})$ such that $\phi^{\prime}(x_{1}) = \phi^{\prime}(x_{2}) = 0$.
\end{remark}


We state the following result on uniqueness of the limiting process of Theorem \ref{teo3}. Recall that $C_{\partial}(\mathcal{X}, \mathbb{R}_{+})^{+}$ denotes the subset of functions in $C_{b}(\mathcal{X}, \mathbb{R}_{+})$  that can be extended continuously to $\hat{\mathcal{X}}$ and that are bounded away from zero. 

\begin{theorem} \label{teo4}
Let $\mu \in \mathcal{M}(\mathcal{X})$ be non random and suppose that the coefficients in the martingale problem $($\ref{eq10}$)$ are as in Assumption \ref{assum3}  and such that in addition $\sigma \in 	C_{\partial}(\mathcal{X}, \mathbb{R}_{+})^{+}.$ Then uniqueness holds for the martingale problem $($\ref{eq10}$)$ and it is thus well posed. In particular, 
$(\mathbf{Q}^{K})_{K}$ in Theorem \ref{teo3} converge to this unique solution of the martingale problem.
\end{theorem}

Finally, we provide a criterion to check that no mass escapes.

\begin{theorem} \label{teo9}
Assume that $\mathcal{X}$ is not compact and that Assumption \ref{assum3} is fulfilled with  $\sigma \in 	C_{\partial}(\mathcal{X}, \mathbb{R}_{+})^{+}$.
Moreover, suppose that there exists a sequence $(\phi_{n})_{n \geq 1} \subset \mathscr{D}(\mathbf{M})$ such that
\begin{itemize}
\item[(i)] for every $n \geq 1$, $\lim_{d(x, \partial) \rightarrow 0} \phi_{n}(x) = 1$, $\lim_{d(x, \partial) \rightarrow 0}  A\phi_{n}(x) = 0$ (where $d$ is some metric in the Polish space $\mathcal{X}$),
\item[(ii)] $\sup_{n \geq 1} \sup_{x \in \mathcal{X}} \phi_{n}(x) < \infty$ and $\sup_{n \geq 1} \sup_{x \in \mathcal{X}} A\phi_{n}(x) < \infty$. Furthermore, $ \phi_{n} \rightarrow \mathds{1}_{\{ \partial\}}$ and $ A\phi_{n} \rightarrow 0$, as $n \rightarrow \infty$, pointwise.
\end{itemize}

\noindent For a non random $\mu \in \mathcal{M}(\mathcal{X})$, let $\mathbf{Q}_{\mu}$ be the unique solution to the martingale problem $($\ref{eq10}$)$. Then, $\mathbf{Q}_{\mu}$ is actually the law of a measure-valued process in $\mathbb{D}([0,T], \mathcal{M}(\mathcal{X}))$.
\end{theorem}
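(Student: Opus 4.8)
The plan is to use the test functions $\phi_n$ to build a supermartingale controlling the mass that $\nu$ places near the boundary point $\partial$, and then to let $n\to\infty$. Concretely, fix the unique solution $\mathbf{Q}_\mu$ to the martingale problem \eqref{eq10} and work under $\mathbf{Q}_\mu$. Since each $\phi_n\in\mathscr{D}(\mathbf{M})$, the process $M_t(\phi_n)$ defined by \eqref{eq10} is a martingale; reorganizing, for each $n$ and $t\in[0,T]$,
\begin{align*}
\langle \nu_t,\phi_n\rangle &= \langle\mu,\phi_n\rangle + M_t(\phi_n) + \int_0^t\int_{\hat{\mathcal{X}}} p(x)\bigl(A_1\phi_n(x)+A_2\phi_n(x)\bigr)\nu_s({\rm d}x)\,{\rm d}s \\
&\quad - \int_0^t\int_{\hat{\mathcal{X}}}\phi_n(x)\Bigl(b(x)+\int_{\hat{\mathcal{X}}}c(x,y)\nu_s({\rm d}y)-p(x)r(x)\Bigr)\nu_s({\rm d}x)\,{\rm d}s.
\end{align*}
First I would take expectations. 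The martingale term vanishes (it is a genuine martingale started at $0$ by Theorem \ref{teo3}(b), using that $\sup_{t}\mathbf{E}_{\mathbf{Q}_\mu}[\langle\nu_t,\mathbf{1}\rangle]<\infty$). Using boundedness of $p$, $A_1\phi_n+A_2\phi_n$, $b$, $c$, $r$ uniformly in $n$ (hypothesis (ii) gives the uniform bound on $A_1\phi_n+A_2\phi_n$, the rest is Assumption \ref{assum3}), and again the finite first-moment bound, we get $\mathbf{E}_{\mathbf{Q}_\mu}[\langle\nu_t,\phi_n\rangle]\le \langle\mu,\phi_n\rangle + C\int_0^t(1+\mathbf{E}_{\mathbf{Q}_\mu}[\langle\nu_s,\mathbf{1}\rangle])\,{\rm d}s \le \langle\mu,\phi_n\rangle + C'$ for a constant independent of $n$.

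Next I would pass to the limit $n\to\infty$. By hypothesis (ii), $\phi_n\to\mathds{1}_{\{\partial\}}$ boundedly and pointwise on $\hat{\mathcal{X}}$; since $\mu\in\mathcal{M}(\mathcal{X})$ charges no mass at $\partial$, $\langle\mu,\phi_n\rangle\to 0$. For the left side, $\langle\nu_t,\phi_n\rangle\to\langle\nu_t,\mathds{1}_{\{\partial\}}\rangle=\nu_t(\{\partial\})$ a.s. by bounded convergence (for each fixed $\omega$, $\nu_t$ is a finite measure on $\hat{\mathcal{X}}$), and the integrands on the left are dominated by $\langle\nu_t,\mathbf{1}\rangle$ which is integrable. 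Hence Fatou (or dominated convergence, using the uniform mass bound) yields $\mathbf{E}_{\mathbf{Q}_\mu}[\nu_t(\{\partial\})]\le \liminf_n\bigl(\langle\mu,\phi_n\rangle+C'\bigr)$. This only bounds it by $C'$, which is not yet zero — so I would instead work with $t$ small, or better, localize: apply the same argument on $[0,t]$ but now using that $\langle\mu,\phi_n\rangle\to 0$ and that the drift terms, when we further exploit hypothesis (i) and a Gronwall argument in $t$, feed back only through $\nu_s(\{\partial\})$ plus a term that vanishes as $n\to\infty$. The cleaner route: set $u_n(t):=\mathbf{E}_{\mathbf{Q}_\mu}[\langle\nu_t,\phi_n\rangle]$; the identity above gives $u_n(t)\le \langle\mu,\phi_n\rangle + \varepsilon_n t + C\int_0^t u_n(s)\,{\rm d}s$ where $\varepsilon_n:=\sup_x|A_1\phi_n(x)+A_2\phi_n(x)|\cdot\|p\|_\infty + (\text{error from }\phi_n\text{ vs }\mathds{1}_{\{\partial\}}\text{ in the last integral, }\to 0)$ — here one uses that $\phi_n(x)(b(x)+\cdots)$ is supported, up to a vanishing error, away from $\partial$ is false; rather one bounds that term by $C\int_0^t u_n(s)\,{\rm d}s$ directly since $\phi_n\le$ const. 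Gronwall then gives $u_n(t)\le (\langle\mu,\phi_n\rangle+\varepsilon_n T)e^{CT}$, and letting $n\to\infty$ gives $\mathbf{E}_{\mathbf{Q}_\mu}[\nu_t(\{\partial\})]=0$ for every $t$.

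Finally I would upgrade this to a statement about the whole path. From $\mathbf{E}_{\mathbf{Q}_\mu}[\nu_t(\{\partial\})]=0$ for each fixed $t$ we get $\nu_t(\{\partial\})=0$ $\mathbf{Q}_\mu$-a.s. for each fixed $t$; to conclude $\nu\in\mathbb{D}([0,T],\mathcal{M}(\mathcal{X}))$ a.s. I would use right-continuity of $t\mapsto\nu_t$ in $\mathcal{M}(\hat{\mathcal{X}})$ together with the portmanteau theorem: for a countable dense set of times the mass at $\partial$ is zero, hence by right-continuity $\liminf$ of $\langle\nu_{t_k},\phi_n\rangle$ controls $\langle\nu_t,\phi_n\rangle$, and one concludes $\sup_{t\in[0,T]}\nu_t(\{\partial\})=0$ a.s. — equivalently $\nu_t\in\mathcal{M}(\mathcal{X})$ for all $t$, so the paths actually live in $\mathbb{D}([0,T],\mathcal{M}(\mathcal{X}))$ (recalling that $\mathcal{M}(\mathcal{X})$ sits inside $\mathcal{M}(\hat{\mathcal{X}})$ as the measures not charging $\partial$, and that convergence in $\mathcal{M}(\hat{\mathcal{X}})$ among such measures that additionally do not lose mass is convergence in $\mathcal{M}(\mathcal{X})$). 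I expect the main obstacle to be precisely this last step — controlling $\nu_t(\{\partial\})$ simultaneously for all $t$ rather than for a.e.\ fixed $t$, and making sure the Skorokhod path genuinely lies in the smaller space; the expectation/Gronwall estimate itself is routine once hypotheses (i)–(ii) are in hand, with (i) being what guarantees the drift contribution near $\partial$ does not destroy the bound.
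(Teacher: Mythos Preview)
Your approach differs from the paper's in a fundamental way: you work directly with the martingale problem \eqref{eq10}, whereas the paper first proves the no-mass-at-$\partial$ statement for the \emph{killed} martingale problem \eqref{eq20} (their Theorem~\ref{teo10}) and then transfers it to \eqref{eq10} by building a solution of \eqref{eq10} from the killed process via the construction of He~\cite[Theorem 3.2]{He2009} and invoking the uniqueness of Theorem~\ref{teo4}. The reason for this detour is precisely the obstacle you flag in your last paragraph.

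Concretely, there are two issues. The first is minor: your $\varepsilon_n:=\|p\|_\infty\sup_x|A_1\phi_n+A_2\phi_n|$ need not tend to~$0$, because hypothesis~(ii) only gives bounded \emph{pointwise} convergence of $A_1\phi_n+A_2\phi_n$ to~$0$, not uniform convergence. This is easily repaired: pass to the limit $n\to\infty$ inside the expectation first (dominated convergence with the majorant $C\langle\nu_s,\mathbf{1}\rangle$, using $\sup_t\mathbf{E}_{\mathbf{Q}_\mu}[\langle\nu_t,\mathbf{1}\rangle]<\infty$), obtaining $v(t):=\mathbf{E}_{\mathbf{Q}_\mu}[\nu_t(\{\partial\})]\le C\int_0^t v(s)\,{\rm d}s$, and then apply Gronwall. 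So the conclusion $\mathbf{E}_{\mathbf{Q}_\mu}[\nu_t(\{\partial\})]=0$ for each fixed $t$ can indeed be reached along your lines.

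The second issue is the genuine gap. You need $\nu_t(\{\partial\})=0$ for \emph{all} $t$ simultaneously, and you propose to get this from right-continuity of $t\mapsto\nu_t$ in $\mathcal{M}(\hat{\mathcal{X}})$ plus portmanteau. But $\mathds{1}_{\{\partial\}}$ is not continuous on $\hat{\mathcal{X}}$ (the singleton $\{\partial\}$ is closed but not open when $\mathcal{X}$ is non-compact), so weak right-continuity of $\nu_t$ does \emph{not} yield right-continuity of $t\mapsto\nu_t(\{\partial\})$; your interchange of the limits $k\to\infty$ and $n\to\infty$ in $\langle\nu_{t_k},\phi_n\rangle$ is exactly where the argument breaks. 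The paper circumvents this by working under \eqref{eq20}, where Lemma~\ref{lemma2} supplies moments of all orders. This permits an $L^2$ argument: Doob's maximal inequality shows that $M'(\phi_n)$ converges to a c\`adl\`ag martingale $M^\partial$ \emph{uniformly in $t$} in mean square, and the resulting identity $M^\partial_t=\nu'_t(\{\partial\})+\int_0^t(\cdots)\nu'_s(\{\partial\})\,{\rm d}s$ then holds for all $t$ almost surely, so that $t\mapsto\nu'_t(\{\partial\})$ is itself c\`adl\`ag and the upgrade from ``each $t$'' to ``all $t$'' is immediate. Under \eqref{eq10} you only have first moments (Theorem~\ref{teo3}(b)), so this $L^2$ machinery is unavailable; that is why the paper goes through the killed problem and then appeals to the localization/uniqueness argument to finish.
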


\begin{remark}
We give an example that satisfies the conditions of Theorem \ref{teo9}. Consider the framework of Remark \ref{exrem1} (b) where $\mathcal{X} = \mathbb{R}^{l}$, $l \geq 1$. In this case, $A$ is given for $\phi \in C_{\partial}^{2}(\mathbb{R}^{l}, \mathbb{R})$ by $A\phi(x) = \frac{1}{2} \sum_{i,j = 1}^{l} \Sigma_{ij}(x) \frac{\partial^{2} \phi(x)}{\partial x_{i} \partial x_{j}} $. For $n \geq 1$, let
\begin{eqnarray*}
\phi_{n}(x) = \left\{ \begin{array}{lcl}
              \exp \left(- \frac{1}{|x|^{2} - n^{2}} \right) & \mbox{  if } & |x| > n, \\
              0  & \mbox{  if } & |x| \leq n. \\
              \end{array}
    \right.
\end{eqnarray*}

\noindent Then, one can easily check that $(\phi_{n})_{n \geq 1} \subset \mathscr{D}(\mathbf{M})$ and that the conditions of Theorem \ref{teo9} are satisfied.
\end{remark}

\begin{remark}
We notice the following:
\begin{itemize}
\item[(a)] Following the terminology of the theory of superprocesses (see for example \cite{Dawson1991}, \cite{Fitz1992} and \cite{Li2011}), one could refer to the solution of the
martingale problem (\ref{eq10}) as the $( A, \psi, c)$-superprocess with competition, i.e.\ a superprocess with spatial motion governed by the infinitesimal generator $A$, reproduction mechanism or branching mechanism $\psi$, and with competition kernel $c$.

\item[(b)] In the non-spatial setting (i.e.\ one type setting without mutation), Lambert \cite{Lam2005} introduced a class of general branching processes with logistic growth, abbreviated $LB$-processes. These processes may be viewed as continuous-state branching processes (CSBP's) with an extra negative interaction between each pair of individuals in the population. Then the solution to the martingale problem (\ref{eq10}) can also be thought-out as a generalization of $LB$-processes to model spatially structured populations.

\item[(c)] The solution of the martingale problem (\ref{eq10}) generalizes the models proposed by M\'el\'eard \cite{Four2004},  Champagnat, et al.\ \cite{Cha2008}, Jourdain, et al.\ \cite{Jou}. These can be recovered by considering $\psi(x,z) =  -b(x)z + \sigma(x)z^{2}$ for $x \in \mathcal{X}$ and $z \geq 0$, and $A$ the Laplacian or the fractional Laplacian. The solution to (\ref{eq10}) can also be seen as an extension of the model of Etheridge \cite{Eth2004} by taking $\psi(x,z) =  -bz + \sigma z^{2}$ for $x \in \mathcal{X}$, $z \geq 0$, $b, \sigma$ constants, $A$ the Laplacian and $c(x,y) = h(|x-y|)$, for $x,y \in \mathcal{X}$, with a nonnegative decreasing function $h$ on $\mathbb{R}_{+}$ that satisfies $\int_{0}^{\infty} h(r) r^{d-1} {\rm d}r < \infty$.

\item[(d)] It is important to point out that we were not able to show uniqueness in general for the martingale problem (\ref{eq10}). More precisely, the case when the diffusion part in the branching mechanism $\psi$, i.e.\ $\sigma$ in Assumption \ref{assum3} (iv), is no bounded away from zero is not covered in Theorem \ref{teo4}. We could not obtain a useful Girsanov type theorem in this case (see Section \ref{Uni2}) to get rid of the nonlinearity problems, which prevent us to use Laplace-transform techniques as in the classical theory of superprocess. It seems that this is a really hard problem.
\end{itemize}
\end{remark}

Section \ref{profteo3} is devoted to the proof of Theorem \ref{teo3}. We firstly establish in Section \ref{tightness} the tightness of the sequence $(\nu^{K})_{K}$, i.e., Theorem \ref{teo3} (a). In Section \ref{Ident}, we identify its limiting values $\nu$, and we show that they satisfy the properties of Theorem \ref{teo3} (b). In Section \ref{Uni}, we prove Theorem \ref{teo4} about uniqueness of the limiting process and the convergence of the sequence $(\nu^{K})_{K}$. Finally, we show that there is no escape of mass for the limiting process, i.e. Theorem \ref{teo9}, in Section \ref{lastT}.

\section{Proof of Theorem \ref{teo3}} \label{profteo3}
Throughout this section we assume that the assumptions of Theorem \ref{teo3} are fulfilled.
\subsection{Tightness} \label{tightness}

For $0 < T < + \infty$ and $\mu^{K} \in \mathcal{M}(\mathcal{X})$, recall that $\mathbf{Q}^{K} = \mathbf{L}(\mu^{K})$ is the law of the process $\nu^{K} = (\nu^{K}, t \in [0,T])$ such that $\mathbf{Q}^{K}(\nu_{0}^{K} = \mu^{K})=1$. We denote by $\mathbf{E}^{K}$ the expectation with respect to $\mathbf{Q}^{K}$.

We shall prove that:

\begin{proposition} \label{prop2}
The sequence of laws $(\mathbf{Q}^{K})_{K}$ in $\mathcal{P}(\mathbb{D}([0,T], \mathcal{M}(\hat{\mathcal{X}})))$ is tight.
\end{proposition}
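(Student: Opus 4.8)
The plan is to use a tightness criterion for càdlàg measure‑valued processes (in the spirit of Jakubowski and of Roelly–Coppoletta), which reduces the statement to two ingredients: (i) a \emph{compact containment} condition holding uniformly in $K$; and (ii) tightness in $\mathbb{D}([0,T],\mathbb{R})$ of the real‑valued processes $(\langle\nu^{K}_{\cdot},\phi\rangle)_{K}$ for $\phi$ ranging over a measure‑determining family of test functions, which we take to be a suitable subset $D$ of $\mathscr{D}(A_{1})\cap\mathscr{D}(A_{2})$ (it contains $\mathbf{1}$, and the maps $\mu\mapsto\langle\mu,\phi\rangle$, $\phi\in D$, separate the points of $\mathcal{M}(\hat{\mathcal{X}})$). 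Part (ii) will be verified via Aldous's criterion.

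\textbf{Compact containment and a uniform moment bound.} Since $\hat{\mathcal{X}}$ is compact, the set $\{\mu\in\mathcal{M}(\hat{\mathcal{X}}):\langle\mu,\mathbf{1}\rangle\le R\}$ is weakly compact, so compact containment follows once $\sup_{K}\mathbb{P}(\sup_{t\le T}\langle\nu^{K}_{t},\mathbf{1}\rangle>R)\to 0$ as $R\to\infty$. I would obtain this from the semimartingale decomposition of $t\mapsto\langle\nu^{K}_{t},\mathbf{1}\rangle$ provided by the Poisson representation of Definition~\ref{def1} together with Theorem~\ref{teo2} (after a routine localization at the times $\tau_{n}$ of (\ref{e:taun})): its drift rate equals $\int_{\hat{\mathcal{X}}}\big(b_{K}(x)\kappa^{K}(x,x)-d_{K}(x)+b_{K}(x)p(x)\int(\kappa^{K}(x,h)-\kappa^{K}(x,x))m_{K}(x,{\rm d}h)\big)\nu^{K}_{s}({\rm d}x)-\int\!\int c(x,y)\nu^{K}_{s}({\rm d}y)\,\nu^{K}_{s}({\rm d}x)$. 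Dropping the non‑positive competition term and bounding the remaining integrand by a constant — here Assumption~\ref{assum3}(iii) controls $b_{K}\kappa^{K}(x,x)-d_{K}$ and the last limit in Assumption~\ref{assum3}(viii) controls $b_{K}(x)\int(\kappa^{K}(x,h)-\kappa^{K}(x,x))m_{K}(x,{\rm d}h)$ — gives, via Gronwall's lemma and (\ref{eq13}), $\sup_{t\le T}\mathbf{E}^{K}[\langle\nu^{K}_{t},\mathbf{1}\rangle]\le C\sup_{K}\mathbf{E}^{K}[\langle\nu^{K}_{0},\mathbf{1}\rangle]<\infty$ uniformly in $K$. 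To pass to the running supremum, observe that $Z_{t}:=e^{-Ct}\langle\nu^{K}_{t},\mathbf{1}\rangle$ is a non‑negative supermartingale for $C$ large enough (the competition term, being non‑positive, only helps), so Doob's maximal inequality yields $\mathbb{P}(\sup_{t\le T}\langle\nu^{K}_{t},\mathbf{1}\rangle>R)\le e^{CT}\mathbf{E}^{K}[\langle\nu^{K}_{0},\mathbf{1}\rangle]/R$, uniformly in $K$.

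\textbf{Aldous's criterion for $\langle\nu^{K}_{\cdot},\phi\rangle$.} Write $\langle\nu^{K}_{t},\phi\rangle=\langle\nu^{K}_{0},\phi\rangle+A^{K}_{t}(\phi)+M^{K}_{t}(\phi)$ with finite‑variation part $A^{K}(\phi)=\int_{0}^{\cdot}\Psi^{K}_{s}(\phi)\,{\rm d}s$ and martingale part $M^{K}(\phi)$; organizing the clonal‑birth, mutation and death contributions so that the $O(K)$ terms either cancel ($b_{K}\kappa^{K}$ against $d_{K}$, Assumption~\ref{assum3}(iii)) or are tamed by the generator structure (Assumption~\ref{assum3}(viii)), one finds $|\Psi^{K}_{s}(\phi)|\le C_{\phi}(\langle\nu^{K}_{s},\mathbf{1}\rangle+\langle\nu^{K}_{s},\mathbf{1}\rangle^{2})$, the square coming only from competition. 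For stopping times $\tau\le T$ and $\delta\downarrow 0$, on the event $\{\sup_{t\le T}\langle\nu^{K}_{t},\mathbf{1}\rangle\le R\}$ — of probability $\ge 1-\eta$ uniformly in $K$ by the previous step — one has $|A^{K}_{\tau+\delta}(\phi)-A^{K}_{\tau}(\phi)|\le C_{\phi}\delta(R+R^{2})$, giving Aldous's condition for $A^{K}(\phi)$. For $M^{K}(\phi)$, which need not be square‑integrable, I would split it along jump sizes: $M^{K}(\phi)=M^{K,\le}(\phi)+M^{K,>}(\phi)$, where $M^{K,\le}(\phi)$ collects the jumps of $\langle\cdot,\phi\rangle$ of modulus $\le 1$ and is locally square‑integrable, with $\langle M^{K,\le}(\phi)\rangle_{t}=\int_{0}^{t}\mathbf{G}^{K,\le}_{s}\,{\rm d}s$ and $\mathbf{G}^{K,\le}_{s}\le C_{\phi}(\langle\nu^{K}_{s},\mathbf{1}\rangle+\langle\nu^{K}_{s},\mathbf{1}\rangle^{2})$ — the bound on the suitably truncated second moments of the rescaled offspring being precisely what Assumption~\ref{assum3}(iv) supplies (via $e^{-u}-1+u\asymp u\wedge u^{2}$) — so that Lenglart's domination inequality plus the same localization yields Aldous's condition for $M^{K,\le}(\phi)$; while $M^{K,>}(\phi)$ compensates the "large" jumps, whose total rate and whose compensator rate are both bounded by $C_{\phi}\langle\nu^{K}_{s},\mathbf{1}\rangle$, again by Assumption~\ref{assum3}(iv), so on $\{\sup_{t\le T}\langle\nu^{K}_{t},\mathbf{1}\rangle\le R\}$ the probability that $M^{K,>}(\phi)$ moves by more than $\epsilon$ over $[\tau,\tau+\delta]$ is at most $C_{\phi,R}\,\delta$. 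Choosing first $R$, then the Lenglart parameter, then $\delta$ completes the verification of (ii), and combining with compact containment gives tightness.

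\textbf{Main obstacle.} The principal difficulty is the loss of moments: Assumption~\ref{assum3} only guarantees uniformly bounded \emph{first} moments of the offspring law and the limiting branching mechanism $\psi$ may carry an infinite‑variance Lévy component, so $M^{K}(\phi)$ is in general not an $L^{2}$‑martingale and the classical Aldous–Rebolledo criterion (through $\langle M^{K}(\phi)\rangle$) is unavailable. The jump‑size truncation above is the remedy: it isolates a locally‑$L^{2}$ part whose angle bracket is controlled by Assumption~\ref{assum3}(iv) and a rare "large‑jump" part with a tame compensator, again controlled by (iv). A secondary, purely bookkeeping point is that the competition term makes both the drift and the $L^{2}$ bracket only quadratically bounded in the total mass; this is harmless for compact containment because that term has a sign (non‑positive for $\phi\ge 0$, hence it only improves the moment estimates) and harmless for the Aldous estimates because it is absorbed into the localization on $\{\sup_{t\le T}\langle\nu^{K}_{t},\mathbf{1}\rangle\le R\}$.
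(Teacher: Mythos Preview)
Your argument is essentially correct, but it takes a genuinely different route from the paper's.

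\textbf{What the paper does.} The paper also uses Jakubowski's criterion, and its compact containment step is close to yours (it proves the slightly stronger Lemma~\ref{lemma1}, $\sup_{K}\mathbf{E}^{K}[\sup_{t\le T}\langle\nu^{K}_{t},\mathbf{1}\rangle]<\infty$, by dropping the non-positive death terms and applying Gronwall). The real difference is in step (ii). Instead of linear test functions and Aldous, the paper takes as separating family the \emph{bounded} functions $f(\mu)=\sum_{i}\theta_{i}e^{-\lambda_{i}\langle\mu,\phi_{i}\rangle}$ with $\phi_{i}\in C_{\partial}(\mathcal{X},\mathbb{R}_{+})$, and shows directly that $|\mathscr{L}^{K}f(\nu^{K}_{t})|\le C\langle\nu^{K}_{t},\mathbf{1}\rangle$ uniformly in $K$; tightness of $(f(\nu^{K}_{t}))_{t}$ then follows from the Ethier--Kurtz criterion (Theorems~3.2.2 and~3.9.4 with $p=\infty$) combined with Lemma~\ref{lemma1}. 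The point is that the exponential absorbs precisely the two obstacles you had to work around: the competition term becomes $K\int(f(\mu-\delta_{x}/K)-f(\mu))\,c(x,y)\,\mu({\rm d}y)\mu({\rm d}x)$, which by the bound $\int|f(\mu-u\delta_{x})-f(\mu)|\mu({\rm d}x)\le Cu$ of Remark~\ref{remfunc} is \emph{linear} in $\langle\mu,\mathbf{1}\rangle$ rather than quadratic; and the birth term is controlled directly by Assumption~\ref{assum3}(iv)--(v) without any jump truncation, since $|e^{-u}-1|\le 1$ makes the generator applied to $f$ automatically first-order.

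\textbf{Comparison.} Your approach is the ``hands-on'' one: work with $\langle\nu^{K},\phi\rangle$ itself, then neutralize the quadratic competition drift by localizing on $\{\sup_{t}\langle\nu^{K}_{t},\mathbf{1}\rangle\le R\}$ and neutralize the missing second moments by splitting the martingale at jump size~$1$ and invoking Lenglart. This is perfectly valid---the key bound on truncated second moments does indeed follow from Assumption~\ref{assum3}(iv) via $e^{-u}-1+u\asymp u\wedge u^{2}$, as you note---but it costs two extra layers of argument (localization plus jump splitting) that the exponential test functions avoid entirely. The paper's route is shorter and requires only a first-moment bound on the total mass; your route is more robust in that it would adapt more readily to situations where exponential test functions are unavailable or inconvenient.
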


First, we obtain the following moment estimate.

\begin{lemma} \label{lemma1}
For all $0 < T < \infty$, we have that
\begin{eqnarray*}
\sup_{K} \mathbf{E}^{K}\big[  \sup_{t \in [0, T]} \langle \nu_{t}^{K}, \mathbf{1} \rangle^{1+\alpha} \big]  < + \infty,
\end{eqnarray*}

\noindent where $\alpha \in (0,1]$ is as in Assumption \ref{assum3} (ii). 
\end{lemma}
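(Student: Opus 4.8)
The plan is to control the supremum of the total mass $\langle \nu_t^K, \mathbf{1}\rangle$ uniformly in $K$ by writing an evolution equation for it, discarding the (non-positive) death and competition contributions, and estimating the remaining birth/mutation terms using the bounded mean-offspring assumption. First I would apply the martingale decomposition of Theorem \ref{teo2}(a) — or equivalently go directly back to the Poissonian representation of Definition \ref{def1} as in the proof of Theorem \ref{teo1} — with the test function $f(\nu) = \langle \nu, \mathbf{1}\rangle$. Dropping the negative jumps coming from $N_{\mathrm d}$, one is left with
\begin{align*}
\langle \nu_t^K, \mathbf{1}\rangle &\leq \langle \nu_0^K, \mathbf{1}\rangle + \int_{[0,t]}\!\!\int_{\mathbb N}\!\int_{\mathbb N}\!\int_{\mathbb R_+} \tfrac{k}{K}\,\mathds 1_{\{i\leq K\langle\nu_{s-}^K,\mathbf 1\rangle\}}\,\mathds 1_{\{z\leq b_K(\cdot)(1-p(\cdot))\pi^K(\cdot,\cdot,k)\}}\,N_{\mathrm c}(\mathrm ds,\mathrm di,\mathrm dk,\mathrm dz)\\
&\quad + \int_{[0,t]}\!\!\int_{\mathbb N}\!\int_{\mathcal X\times\mathbb N}\!\int_{\mathbb R_+} \tfrac{k}{K}\,\mathds 1_{\{i\leq K\langle\nu_{s-}^K,\mathbf 1\rangle\}}\,\mathds 1_{\{z\leq b_K(\cdot)p(\cdot)\pi^K(\cdot,h,k)m_K(\cdot,h)\}}\,N_{\mathrm m}(\mathrm ds,\mathrm di,\mathrm dh,\mathrm dk,\mathrm dz).
\end{align*}

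Next I would take expectations, using the compensation formula for Poisson point measures to replace the two stochastic integrals by their intensities. Here Assumption \ref{assum1}(i) bounds $b_K \leq \overline b$ and Assumption \ref{assum3}(ii) gives $\sup_{x,h}\sum_k k\,\pi^K(x,h,k) = \kappa_1 < \infty$ uniformly in $K$; the mutation density integrates to one against $\bar m$. This yields a bound of the form $\mathbf E^K[\langle\nu_{t}^K,\mathbf 1\rangle]\leq \mathbf E^K[\langle\nu_0^K,\mathbf 1\rangle] + \overline b\,\kappa_1\int_0^t \mathbf E^K[\langle\nu_s^K,\mathbf 1\rangle]\,\mathrm ds$, and Grönwall's lemma — combined with the uniform bound $\sup_K \mathbf E^K[\langle\nu_0^K,\mathbf 1\rangle] < \infty$ from \eqref{eq13} — gives $\sup_K\sup_{t\in[0,T]}\mathbf E^K[\langle\nu_t^K,\mathbf 1\rangle] < \infty$. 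To upgrade the pointwise-in-$t$ bound to a bound on the supremum over $[0,T]$, I would return to the inequality before taking expectations, take the supremum over $t\in[0,T]$ first, and then take expectations: the right-hand side is already monotone in $t$ except for the stochastic integrals, which are monotone as well once the negative death terms are dropped, so $\sup_{t\le T}$ may be moved inside directly and no Doob inequality is even needed.

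The one point requiring slight care — and the main (mild) obstacle — is that a localization argument via the stopping times $\tau_n$ from \eqref{e:taun} is needed to justify the use of the compensation formula and Grönwall before we know the expectations are finite, exactly as in the proof of Theorem \ref{teo1}(b): one works on $[0,t\wedge\tau_n]$, obtains a bound uniform in $n$ and $K$, and then lets $n\to\infty$ using Fatou's lemma together with the non-explosion established in Theorem \ref{teo1}(a). Since Theorem \ref{teo1} already furnishes $\tau_n\to\infty$ a.s. and the requisite first-moment control of $\langle\nu_0^K,\mathbf 1\rangle$, this is routine; the real content is just that the birth rate and the mean offspring number are bounded uniformly in $K$, which is precisely what Assumption \ref{assum3}(ii) (together with Assumption \ref{assum1}(i)) provides.
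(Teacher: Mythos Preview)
Your argument has a genuine gap: the bound $b_K \leq \bar b$ from Assumption~\ref{assum1}(i) is \emph{not} uniform in $K$ --- the paper explicitly allows $\bar b$ to depend on $K$, and in the motivating examples (Remark~\ref{rem1}(a),(b)) one has $b_K$ of order $K$ or $K^\beta$. Consequently your Gr\"onwall step yields
\[
\mathbf{E}^K\Big[\sup_{t\in[0,T]}\langle\nu_t^K,\mathbf 1\rangle\Big] \;\le\; \mathbf{E}^K[\langle\nu_0^K,\mathbf 1\rangle]\,e^{\bar b\,\kappa_1 T},
\]
and the right-hand side blows up as $K\to\infty$. Dropping the natural death term $d_K$ is precisely what destroys the uniform control.

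The paper's proof keeps $d_K$ and, after compensating the Poisson integrals, reorganises the drift as
\[
\big(\kappa^K(x,x)\,b_K(x)-d_K(x)\big)\,\langle\nu_s^K,\mathbf 1\rangle
\;+\; b_K(x)\,p(x)\!\int_{\hat{\mathcal X}}\!\big(\kappa^K(x,h)-\kappa^K(x,x)\big)\,m_K(x,h)\,\bar m(\mathrm dh).
\]
The first term is bounded uniformly in $K$ by Assumption~\ref{assum3}(iii), and the second by Assumption~\ref{assum3}(viii) (the convergence to $r$ forces the sequence to be bounded). Only after this cancellation does Gr\"onwall give a constant independent of $K$. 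The localisation via $\tau_n$ that you describe is indeed needed and is handled as you say; the missing ingredient is the cancellation, not the stopping argument.
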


\begin{proof}
Following a similar construction as in Section \ref{poissonc}, one can define a $\mathcal{M}^{K}(\hat{\mathcal{X}})$-valued Markov process $\tilde{\nu} = (\tilde{\nu}_{t}^{K}, t \geq 0)$ with infinitesimal generator $\tilde{\mathscr{L}}^{K}$ given by
\begin{align*} 
\tilde{\mathscr{L}}^{K} f(\mu^{K}) & =  K \int_{\hat{\mathcal{X}}}  \sum_{k=1}^{\infty} b_{K}(x)(1-p(x)) \pi^{K}(x,x, k)
\left(  f \left(   \mu^{K} + k \frac{\delta_{x}}{K} \right) - f(\mu^{K}) \right) \mu^{K}({\rm d} x) \nonumber \\
& \hspace*{5mm} +  K  \int_{\hat{\mathcal{X}}} \sum_{k=1}^{\infty} b_{K}(x) p(x)   \int_{\hat{\mathcal{X}}} \pi^{K}(x,h,k) \left(  f \left(   \mu^{K} + k\frac{\delta_{h}}{K} \right) - f(\mu^{K}) \right) m_{K}(x, h) \bar{m}({\rm d}h) \mu^{K}({\rm d} x) \nonumber \\
& \hspace*{5mm} +  K \int_{\hat{\mathcal{X}}}   d_{K}(x)   \left(  f \left(   \mu^{K} - \frac{\delta_{x}}{K} \right) - f(\mu^{K}) \right) \mu^{K}({\rm d} x),
\end{align*}

\noindent for all $\mu^{K} \in \mathcal{M}^{K}(\hat{\mathcal{X}})$ and $f$ a measurable and bounded function from $\mathcal{M}^{K}(\hat{\mathcal{X}})$ to $\mathbb{R}$. This process can be associated with a stochastic interacting individual system where individuals behave as in the model described in Section \ref{model} but with the difference that there is no death due to competition.

Note also that the processes $( \langle \nu_{t}^{K}, \mathbf{1} \rangle, t \geq 0)$ and $( \langle \tilde{\nu}_{t}^{K}, \mathbf{1} \rangle, t \geq 0)$ can be coupled together such that $\langle \nu_{0}^{K}, \mathbf{1} \rangle = \langle \tilde{\nu}_{0}^{K}, \mathbf{1} \rangle$ and $\langle \nu_{t}^{K}, \mathbf{1} \rangle \leq \langle \tilde{\nu}_{t}^{K}, \mathbf{1} \rangle$ for all $t \geq 0$. Therefore, it is enough to prove the following moment bound to finish the proof,
\begin{eqnarray} \label{eq1Rev2}
\sup_{K} \mathbf{E}^{K}\big[  \sup_{t \in [0, T]} \langle \tilde{\nu}_{t}^{K}, \mathbf{1} \rangle^{1+\alpha} \big]  < + \infty.
\end{eqnarray}

Following a similar argument as in the proof of Theorem \ref{teo1} (i.e., a Poissonian construction for the system associated to the operator $\tilde{\mathscr{L}}^{K}$), one deduce that
\begin{align*}
& \mathbb{E} \big [  \langle \tilde{\nu}_{t}^{K}, \mathbf{1} \rangle^{1+\alpha} \big]  \leq \mathbb{E} \left [ \langle \nu_{0}^{K}, \mathbf{1} \rangle^{1+\alpha} \right] \\
& \hspace*{3mm}+ K  \, \mathbb{E} \Bigg [ \int_{0}^{t}  \int_{ \hat{\mathcal{X}}} \bigg( \sum_{k=1}^{\infty} b_{K}(x)(1-p(x))\pi^{K}(x,x,k) \left( \left(\langle \tilde{\nu}_{s}^{K}, \mathbf{1} \rangle + \frac{k}{K} \right)^{1+\alpha} - \langle \tilde{\nu}_{s}^{K}, \mathbf{1} \rangle^{1+\alpha} \right)
\\
& \hspace*{3mm}+ \sum_{k=1}^{\infty} b_{K}(x) p(x) \int_{ \hat{\mathcal{X}}} \pi^{K}(x,h,k) m_{K}(x,h) \left( \left(\langle \tilde{\nu}_{s}^{K}, \mathbf{1} \rangle + \frac{k}{K} \right)^{1+\alpha} - \langle \tilde{\nu}_{s}^{K}, \mathbf{1} \rangle^{1+\alpha} \right) \bar{m}({\rm d}h) \\
& \hspace*{3mm}+ d_{K}(x) \left( \left(\langle \tilde{\nu}_{s}^{K}, \mathbf{1} \rangle - \frac{1}{K} \right)^{1+\alpha} - \langle \tilde{\nu}_{s}^{K}, \mathbf{1} \rangle^{1+\alpha} \right) 
\bigg) \tilde{\nu}_{s}^{K}({\rm d}x ) {\rm d}s \Bigg],
\end{align*}

\noindent for $t \geq 0$. Note the following inequalities
\begin{eqnarray*}
(x+k)^{1+ \alpha} - x^{1+\alpha} \leq (1+\alpha) (x^{\alpha}k + k^{1+\alpha}) \hspace*{2mm} \text{and} \hspace*{2mm} (x-1)^{1+ \alpha} - x^{1+\alpha} \leq (1+\alpha)(1-x^{\alpha}), 
\end{eqnarray*}

\noindent for $x,k \in \mathbb{N}$ and $\alpha \in (0,1]$. Then, 
\begin{align*}
\mathbb{E} \big [  \langle \tilde{\nu}_{t}^{K}, \mathbf{1} \rangle^{1+\alpha} \big]  & \leq \mathbb{E} \left [ \langle \nu_{0}^{K}, \mathbf{1} \rangle^{1+\alpha} \right] + (1+\alpha)  \, \mathbb{E} \Bigg [ \int_{0}^{t} \int_{\hat{\mathcal{X}}} \bigg( (b_{K}(x) \kappa^{K}(x,x) -d_{K}(x))\langle \tilde{\nu}_{s}^{K}, \mathbf{1} \rangle^{\alpha}  \\
& \hspace*{4mm} + K^{-\alpha}(b_{K}(x) \kappa^{K}_{1+\alpha}(x,x) +d_{K}(x))
\\
& \hspace*{4mm}+ \langle \tilde{\nu}_{s}^{K}, \mathbf{1} \rangle^{\alpha} b_{K}(x) p(x) \int_{ \hat{\mathcal{X}}}( \kappa^{K}(x,h) - \kappa^{K}(x,x)) m_{K}(x,h)  \bar{m}({\rm d}h) \\
& \hspace*{4mm} +  K^{-\alpha} b_{K}(x) p(x) \int_{ \hat{\mathcal{X}}}( \kappa^{K}_{1+\alpha}(x,h) - \kappa^{K}_{1+\alpha}(x,x)) m_{K}(x,h)  \bar{m}({\rm d}h) \bigg) \tilde{\nu}_{s}^{K}({\rm d}x ) {\rm d}s \Bigg].
\end{align*}

\noindent It follows from Assumption \ref{assum3}  (ii), (iii), (v) and (vi)  that
\begin{align*}
\mathbf{E}^{K} \big[ \langle \tilde{\nu}_{t}^{K}, \mathbf{1} \rangle^{1+\alpha} \big] & \leq \mathbf{E}^{K} [\langle \nu_{0}^{K}, \mathbf{1} \rangle^{1+\alpha}]  + C    \int_{0}^{t} \mathbf{E}^{K} [ 1+ 
  \langle \tilde{\nu}^{K}_{s}, \mathbf{1} \rangle^{1+\alpha} ]
 {\rm d}s  ,
\end{align*}

\noindent for some positive constant $C$ (that does not depend on $K$). For all $0 < T < \infty$, Gronwall's Lemma and Condition (\ref{eq13}) allow us to conclude that there exists another constant $C_{T} > 0$, not depending on $K$, such that
\begin{eqnarray} \label{eq2Rev2}
\mathbf{E}^{K}\big[  \langle \tilde{\nu}_{t}^{K}, \mathbf{1} \rangle^{1+\alpha} \big]  \leq C_{T}
 \hspace*{4mm} \text{for all} \hspace*{2mm} t \in [0,T].
\end{eqnarray}

On the other hand, it is not difficult to see that the process $\tilde{M}^{K} = (\tilde{M}_{t}^{K}, t \geq 0)$ given by
\begin{eqnarray*}
\tilde{M}_{t}^{K} = \langle \tilde{\nu}_{t}^{K}, \mathbf{1} \rangle -\langle \nu_{0}^{K}, \mathbf{1} \rangle - \int_{0}^{t} \tilde{\mathscr{L}}^{K} g(\tilde{\nu}_{s}^{K}) {\rm d} s,
\end{eqnarray*}

\noindent where $g(\tilde{\nu}_{s}^{K}) = \langle \tilde{\nu}_{s}^{K}, \mathbf{1} \rangle$, is a martingale. But then, Doob's maximal inequality implies that there is a constant $C^{\prime} >0$ (that does not depend on $K$) such that
 \begin{eqnarray*}
 \mathbf{E}^{K}\big[\sup_{t \in [0, T]}\langle \tilde{\nu}_{t}^{K}, \mathbf{1} \rangle^{1+\alpha} \big]  \leq   \mathbf{E}^{K} [ \langle \nu_{0}^{K}, \mathbf{1} \rangle] +  C^{\prime} \mathbf{E}^{K} [|\tilde{M}_{T}^{K}|^{1+\alpha}] + \mathbf{E}^{K} \left[ \left( \int_{0}^{T} |\tilde{\mathscr{L}}^{K} g(\tilde{\nu}_{s}^{K})| {\rm d} s \right)^{1+\alpha}\right].
 \end{eqnarray*}

\noindent Note that Assumptions \ref{assum3} (ii), (iii), (v) and (vi) imply that there is a constant $C^{\prime \prime} >0$, not depending on $K$, such that
\begin{align} \label{eq1Rev22}
| \tilde{\mathscr{L}}^{K} g(\tilde{\nu}_{s}^{K})| & \leq \int_{\hat{\mathcal{X}}} (b_{K}(x) \kappa^{K}(x,x) -d_{K}(x)) \tilde{\nu}_{s}^{K}({\rm d} x) \nonumber \\
& \hspace*{5mm} +   \int_{\hat{\mathcal{X}}} b_{K}(x) p(x) \left( \int_{ \hat{\mathcal{X}}}( \kappa^{K}(x,h) - \kappa^{K}(x,x)) m_{K}(x,h)  \bar{m}({\rm d}h) \right) \tilde{\nu}_{s}^{K}({\rm d} x), \nonumber \\
& \leq C^{\prime \prime} \langle \tilde{\nu}_{s}^{K}, \mathbf{1} \rangle,
\end{align}

\noindent for $s \in [0,T]$. Then the above inequality together with Jensen's inequality shows that
 \begin{eqnarray} \label{eq2Rev22}
 \mathbf{E}^{K}\big[\sup_{t \in [0, T]}\langle \tilde{\nu}_{t}^{K}, \mathbf{1} \rangle^{1+\alpha} \big]  \leq   \mathbf{E}^{K} [ \langle \nu_{0}^{K}, \mathbf{1} \rangle] +  C^{\prime} \mathbf{E}^{K} [|\tilde{M}_{T}^{K}|^{1+\alpha}] + C_{T}^{\prime} \sup_{t \in [0,T]}\mathbf{E}^{K} [ \langle \tilde{\nu}_{t}^{K}, \mathbf{1} \rangle^{1+\alpha}],
 \end{eqnarray}
 
 \noindent for some constant $C_{T}^{\prime} > 0$ that does not depend on $K$. Finally, the claim in (\ref{eq1Rev2}) follows by combining Condition (\ref{eq13}), the inequalities (\ref{eq2Rev2}), (\ref{eq1Rev22}), (\ref{eq2Rev22}) and the expression for the martingale $\tilde{M}^{K}$.
\end{proof}

\begin{proof}[Proof of Proposition \ref{prop2}]
Notice that $\mathcal{M}(\hat{\mathcal{X}})$ is a Polish space by \cite[Theorem 3.1.7]{Ethier1986} which implies that $\mathbb{D}([0,T], \mathcal{M}(\hat{\mathcal{X}}))$ is also a Polish space. We use Jakubowski's criterion for tightness \cite[Theorem 3.1]{Jaku1986} that gives necessary and sufficient conditions for tightness in $\mathbb{D}([0,T], \mathcal{M}(\hat{\mathcal{X}}))$. More precisely, let $\mathscr{E}$ be a family of continuous functions on $\mathcal{M}(\hat{\mathcal{X}})$ that separates points in $\mathcal{M}(\hat{\mathcal{X}})$ and is closed under addition. We show that
\begin{itemize}
\item[(a)] For each $\varepsilon > 0$ there exits $\Gamma > 0$ such that
\begin{eqnarray*}
\mathbf{Q}^{K}\left(  \sup_{0 \leq t \leq T} \langle  \nu^{K}_{t}, \mathbf{1} \rangle \leq \Gamma \right) \geq 1 - \varepsilon,  \hspace*{4mm} K \in \mathbb{N}.
\end{eqnarray*}

\item[(b)] The family $(\mathbf{Q}^{K})_{K}$ is $\mathscr{E}$-weakly tight, i.e. for each $f \in \mathscr{E}$ the laws of $(f( \nu_{t}^{K}): t \in [0,T])$ under $\mathbf{Q}^{K}$ are tight.
\end{itemize}

We define the family of functions
\begin{eqnarray*}
 \mathscr{E} := \bigcup_{n=1}^{\infty} \left \{ \sum_{i=1}^{n} \theta_{i} e^{- \lambda_{i} \langle \mu, \phi_{i} \rangle}: \mu \in \mathcal{M}(\hat{\mathcal{X}}), \hspace*{2mm} \lambda_{i} \in \mathbb{R}_{+}, \hspace*{2mm} \theta_{i} \in \mathbb{R} \hspace*{3mm}  \text{and} \hspace*{3mm} \phi_{i} \in C_{\partial}(\mathcal{X}, \mathbb{R}_{+})  \right \}.
\end{eqnarray*}

\noindent Observe that it separates points on $\mathcal{M}(\hat{\mathcal{X}})$ (it follows from Dynkin's $\pi$-$\lambda$ Theorem; see for example \cite[Theorem 1.3.2 and 1.3.3]{Billi2008}) and that it is closed under addition. On the other hand,  Lemma \ref{lemma1} implies (a). Therefore, it only remains to show (b). We consider  $f \in \mathscr{E}$, i.e.,
\begin{eqnarray*}
 f(\mu) =  \sum_{i=1}^{n} \theta_{i} e^{- \lambda_{i} \langle \mu, \phi_{i} \rangle}, \hspace*{5mm} n \in \mathbb{N},
 \end{eqnarray*}

\noindent where $\mu \in \mathcal{M}(\hat{\mathcal{X}})$, $\lambda_{i} \in \mathbb{R}_{+}$, $\theta_{i} \in \mathbb{R}$ and $\phi_{i} \in C_{\partial}(\mathcal{X}, \mathbb{R}_{+})$, for $i = 1, \dots, n$. It has been shown in Remark~\ref{remfunc} that $f \in \mathscr{E}$ satisfies
condition (\ref{eq8}) in Proposition \ref{Prop1}. Then,
\begin{align} \label{exeq1}
| \mathscr{L}^{K} f(\nu^{K}_{t}) | & = \bigg| K \int_{\hat{\mathcal{X}}}  \sum_{k=1}^{\infty} b_{K}(x)(1-p(x)) \pi^{K}(x,x, k)
\left(  f \left(   \nu^{K}_{t} + k \frac{\delta_{x}}{K} \right) - f(\nu^{K}_{t}) \right) \nu^{K}_{t}({\rm d} x) \nonumber \\
& \hspace*{5mm} +  K  \int_{\hat{\mathcal{X}}} \sum_{k=1}^{\infty} b_{K}(x) p(x)   \int_{\hat{\mathcal{X}}} \pi^{K}(x,h,k) \left(  f \left(   \nu^{K}_{t} + k\frac{\delta_{h}}{K} \right) - f(\nu^{K}_{t}) \right) m_{K}(x, {\rm d} h) \nu^{K}_{t}({\rm d} x) \nonumber \\
& \hspace*{5mm} +  K \int_{\hat{\mathcal{X}}}  \left( d_{K}(x) + K \int_{\hat{\mathcal{X}}} c_{K}(x,y) \nu^{K}_{t}({\rm d} y) \right)  \left(  f \left(   \nu^{K}_{t} - \frac{\delta_{x}}{K} \right) - f(\nu^{K}_{t}) \right) \nu^{K}_{t}({\rm d} x) \bigg| \nonumber \\
 & \leq \sum_{i=1}^{n} |\theta_{i}|e^{- \lambda_{i} \langle \nu_{t}^{K}, \phi_{i} \rangle} \bigg | \int_{\hat{\mathcal{X}}}  K  \left( b_{K}(x) \left( g\left(x,x,e^{-\frac{\lambda_{i}\phi_{i}(x)}{K}}\right) -1 \right)  +d_{K}(x)\left(e^{\frac{\lambda_{i}\phi_{i}(x)}{K}} - 1 \right) \right) \nu^{K}_{t}({\rm d} x) \nonumber \\
& \hspace*{5mm} -   \int_{\hat{\mathcal{X}}}  K  b_{K}(x) p(x) \left( g\left(x,x,e^{-\frac{\lambda_{i}\phi_{i}(x)}{K}}\right) -g\left(x,h,e^{-\frac{\lambda_{i}\phi_{i}(h)}{K}}\right) \right)  m_{K}(x,  h) \bar{m}({\rm d} h) \nu^{K}_{t}({\rm d} x) \nonumber \\
& \hspace*{5mm} +   \int_{\hat{\mathcal{X}}}   K^{2} \left( \int_{\hat{\mathcal{X}}} c_{K}(x,y) \nu^{K}_{t}({\rm d} y) \right)  \left( e^{\frac{\lambda_{i} \phi_{i}(x)}{K}}  -1 \right) \nu^{K}_{t}({\rm d} x) \bigg|.
\end{align}

We now use Assumption \ref{assum3}: We apply condition (iv) to the forth line, (v) and (vii) to the fifth line, and (i) to the last line together with (\ref{keyineq}) which implies that writing  $f( \nu^{K})= \sum_{i=1}^{n} \theta_{i} e^{- \lambda_{i}  \langle \nu^{K}, \phi_{i} \rangle}$ this last line can be bounded by
\begin{align*}
&  \left|\int_{\hat{\mathcal{X}}}   K \left( \int_{\hat{\mathcal{X}}} c(x,y) \nu^{K}_{t}({\rm d} y) \right)  \left( f \left( \nu_{t}^{K}-\frac{1}{K} \delta_x \right)-  f  ( \nu_{t}^{K} )  \right) \nu^{K}_{t}({\rm d} x) \right| \\
  & \hspace*{10mm} \leq K C  \int_{\hat{\mathcal{X}}}  \int_{\hat{\mathcal{X}}}  \left| f \left( \nu_{t}^{K}-\frac{1}{K} \delta_x \right)-  f ( \nu_{t}^{K} )  \right|  \nu^{K}_{t}({\rm d} x)  \nu^{K}_{t}({\rm d} y) \\
 & \hspace*{10mm} \leq  C  \langle \nu^{K}_{t}, \mathbf{1} \rangle,
\end{align*}

\noindent where $C$ is a nonnegative constant (not depending on $K$ and whose value changes from line to line). 
Thus, we obtain
\begin{eqnarray*}
\sup_{t \in [0, T]} |\mathscr{L}^{K}f( \nu_{t}^{K} )| \leq C  \sup_{t \in [0, T]} \langle \nu_{t}^{K}, \mathbf{1} \rangle,
\end{eqnarray*}

\noindent for a positive constant $C$ (that does not depend on $K$). Note that Lemma \ref{lemma1} implies that
\begin{eqnarray*}
\sup_{K} \mathbf{E}^{K} \big[  \sup_{t \in [0, T]} |\mathscr{L}^{K}f( \nu_{t}^{K} )|  \big]  < + \infty
\end{eqnarray*}

\noindent and that \cite[Theorem I.51]{Pro2005} implies also that for each $f \in \mathscr{E}$, the process $M^{K}(f) = (M^{K}_{t}(f), t \in [0,T])$ given by
\begin{eqnarray*}
M^{K}_{t}(f) = f(\nu_{t}^{K}) - f( \nu_{0}^{K}) - \int_{0}^{t} \mathscr{L}^{K}f( \nu_{s}^{K} ) {\rm d} s
\end{eqnarray*}

\noindent is a martingale. It then follows from \cite[Theorems 3.2.2 and 3.9.4]{Ethier1986} with $p = \infty$ and Lemma \ref{lemma1} that the laws of $(f( \nu_{t}^{K}): t \in [0,T])$ under $\mathbf{Q}^{K}$ are tight. This shows (b) and our claim follows from \cite[Theorem 3.1]{Jaku1986}.
\end{proof}

\begin{remark}
Alternative to show tightness of the sequence of laws $(\mathbf{Q}^{K})_{K}$ in $\mathcal{P}(\mathbb{D}([0,T], \mathcal{M}(\hat{\mathcal{X}})))$, it suffices, following \cite[Theorem 3.7.1, Chapter 3]{Dawson1991}, to show that for any continuous function $\phi: \hat{\mathcal{X}} \rightarrow \mathbb{R}$, the sequence of laws of the processes $(\langle \nu_{t}^{K}, \phi \rangle_{t}, t \in [0,T])$ is tight in $\mathbb{D}([0,T], \mathbb{R})$. From the moment estimate in Lemma \ref{lemma1}, we believe that this can be achieved by using Aldous criterion \cite{Aldous1978}. We refer to \cite[Proof of Theorem 5.5.3]{Dawson19922}, where this approach is implemented in a similar setting. 
\end{remark}

\subsection{Identifying the limit} \label{Ident}

Recall that $\mathbf{Q}^{K} = \mathbf{L}(\mu^{K})$ denotes the law of the process $\nu^{K}$ such that $\mathbf{Q}^{K} (\nu_{0}^{K} = \mu^{K})=1$, and denote by $\mathbf{Q}_{\mu}$ a limiting value of the tight sequence $(\mathbf{Q}^{K})_{K}$. Recall also that $\mathbb{D}([0,T], \mathcal{M}(\hat{\mathcal{X}}))$ is a separable space; see for example \cite{Bill1999}. By Skorokhod's representation (see \cite[p. 102]{Ethier1986}), we may assume that the c\`adl\`ag processes $(\nu^{K}_{t}, t \in [0,T])$ and $(\nu_{t}, t \in [0,T])$ with distributions $\mathbf{Q}^{K}$ and $\mathbf{Q}_{\mu}$ respectively are defined on the same probability space and that the sequence $(\nu^{K}_{t}, t \in [0,T])$ converges almost surely to $(\nu_{t}, t \in [0,T])$ on $
\mathbb{D}([0,T], \mathcal{M}(\hat{\mathcal{X}}))$. Define $D(\nu) = \{ t \in [0,T]: \mathbb{P}(\nu_{t} = \nu_{t-}) = 1 \}$. Then the complement in $[0,T]$ of
$D(\nu)$ is at most countable by
\cite[Lemma 3.7.7]{Ethier1986}. It follows from \cite[Proposition 3.5.2]{Ethier1986} that for each $t \in D(\nu)$ we have
$\lim_{K \rightarrow \infty} \nu_{t}^{K} = \nu_{t}$ almost surely.

In this section, we show that the limit point $\mathbf{Q}_{\mu}$ of the sequence $(\mathbf{Q}^{K})_{K}$ satisfies the properties stated in Theorem \ref{teo3} (b).

\paragraph{Proof of the moment bound.} The first moment bound of the limiting process $(\nu_{t}, t \in [0,T])$ follows from condition (\ref{eq13}) together with Fatou's Lemma and Lemma \ref{lemma1} (we have implicitly used that $D(v)$ is at most countable \cite[Lemma 3.7.7]{Ethier1986} and right continuity).

\paragraph{Proof of the martingale property.} Recall from Theorem  \ref{teo3} that
\begin{eqnarray*}
\mathscr {D}(\mathbf{M}) = C_{\partial}(\mathcal{X}, \mathbb{R}_{+})^{+} \cap \mathscr{D}(A),\end{eqnarray*}

\noindent where $\mathscr{D}(A)$ is defined in Assumption \ref{assum3}. Let $\phi \in\mathscr {D}(\mathbf{M})$, Theorem \ref{teo2} (b) implies that $E^{K}( \phi) = (E^{K}_{t}(\phi), t \in [0, T])$ given in equation (\ref{eq30}) is a martingale. More precisely, 
\begin{align} \label{exeq2}
 E^{K}_{t}( \phi) & = \exp(-  \langle \nu_{t}^{K}, \phi \rangle) - \exp(-  \langle \nu_{0}^{K},  \phi \rangle) \nonumber
\\
& \hspace*{3mm} - K\int_{0}^{t} \int_{\hat{\mathcal{X}}}
  \left( b_{K}(x) \left( g^{K}\left(x,x, e^{-\frac{ \phi(x)}{K}} \right) -1 \right)
+   d_{K}(x)\left( e^{\frac{\phi(x)}{K}} -1 \right) \right)
  \exp(-  \langle \nu_{s}^{K},  \phi \rangle) \nu_{s}^{K}({\rm d} x)  {\rm d} s \nonumber  \\
& \hspace*{3mm}  +
K \int_{0}^{t} \int_{\hat{\mathcal{X}}}  b_{K}(x) p(x) \Bigg( \int_{\hat{\mathcal{X}}} \bigg( g^{K} \left(x,x, e^{-\frac{\phi(x)}{K}} \right) - g^{K} \left(x,h, e^{-\frac{\phi(h)}{K}} \right) \bigg) m_{K}(x,h) \bar{m}({\rm d}h)  \Bigg) \nonumber \\
& \hspace*{120mm}  \exp(- \langle \nu_{s}^{K},  \phi \rangle)   \nu_{s}^{K}({\rm d} x) {\rm d} s \nonumber
\\
& \hspace*{3mm} -
 K^{2} \int_{0}^{t} \int_{\hat{\mathcal{X}}}  \left( \int_{\hat{\mathcal{X}}} c_{K}(x,y) \nu_{s}^{K}({\rm d} y) \right) \left( e^{\frac{\phi(x)}{K}} -1 \right) \exp(-  \langle \nu_{s}^{K},  \phi \rangle) \nu_{s}^{K}({\rm d} x)  {\rm d} s,
\end{align}

\noindent for $t \in [0,T]$.  Assumption \ref{assum3} (by applying condition (iv) to the second line, (vii) to the third to fourth line and (i) as well as (\ref{eq8}) to the last line) now shows that if a subsequence $(\nu_{K_n})_{n}$ converges to a $\nu$ then we obtain that   $E^{K_n}_{t}( \phi)$ converges weakly to
$E_{t}( \nu, \phi)$ where
\begin{align} \label{eq7}
 E_{t}( \nu, \phi) & = \exp(- \langle \nu_{t}, \phi \rangle) - \exp(-  \langle \nu_{0},  \phi \rangle)   + \int_{0}^{t} \int_{\hat{\mathcal{X}}}
 b(x) \phi(x)
  \exp(- \langle \nu_{s},  \phi \rangle) \nu_{s}({\rm d} x)  {\rm d} s \nonumber  \\
  & \hspace*{10mm} -  \int_{0}^{t} \int_{\hat{\mathcal{X}}}
  \sigma(x) \phi^{2}(x)
  \exp(-  \langle \nu_{s},  \phi \rangle) \nu_{s}({\rm d} x)  {\rm d} s \nonumber  \\
 & \hspace*{10mm}  - \int_{0}^{t} \int_{\hat{\mathcal{X}}} \left(\int_{0}^{\infty} (e^{-u  \phi(x)}-1+u \phi(x)) \Pi(x, {\rm d}u) \right) \exp(- \langle \nu_{s},  \phi \rangle) \nu_{s}({\rm d}x) {\rm d}s \nonumber  \\
& \hspace*{10mm}  +
  \int_{0}^{t} \int_{\hat{\mathcal{X}}}  p(x)  A\phi(x)   \exp(-  \langle \nu_{s},  \phi \rangle)   \nu_{s}({\rm d} x) {\rm d} s \nonumber
\\
& \hspace*{10mm} -
  \int_{0}^{t} \int_{\hat{\mathcal{X}}}  \left( \int_{\hat{\mathcal{X}}} c(x,y) \nu_{s}({\rm d} y) \right) \phi(x)  \exp(-  \langle \nu_{s},  \phi \rangle) \nu_{s}({\rm d} x)  {\rm d} s.
\end{align}
\noindent
This also suggests that  $E_{t}( \nu, \phi)$ for any limit point $\nu$ of the sequence $(\nu^{K})_{K}$ should be a martingale.
To justify this conclusion we need to know that the martingale property was preserved under passage to the limit.
Therefore, it is enough to check that for each $l \in {\mathbb N}, (s_{j})_{j=1}^{l} \subset D(\nu)$, $s,t \in D(\nu)$ with $0 \leq s_{1} \leq \cdots \leq s_{n} < s < t \leq T$,  some continuous and bounded maps $h_{1}, h_{2}, \dots, h_{l}$ on $\mathcal{M}(\hat{\mathcal{X}})$,
\begin{eqnarray} \label{exeq3}
\mathbf{E} \left[ (E_{t}(\nu, \phi) - E_{s}(\nu, \phi)) h_{1}(\nu_{s_{1}}) \cdots h_{l}(\nu_{s_{l}}) \right] = 0.
\end{eqnarray}

\noindent It follows from Theorem \ref{teo2} (b) that
\begin{eqnarray*}
0 = \mathbf{E} \left[ (E_{t}^{K}( \phi) - E_{s}^{K}( \phi)) h_{1}(\nu_{s_{1}}^{K}) \cdots h_{l}(\nu_{s_{l}}^{K}) \right] = \mathbf{E} \left[ \left(E_{t}(\nu^{K}, \phi) - E_{s}(\nu^{K}, \phi)- R_{K}(t,s) \right) h_{1}(\nu_{s_{1}}^{K}) \cdots h_{l}(\nu_{s_{l}}^{K}) \right],
\end{eqnarray*}

\noindent where
\begin{eqnarray*}
R_{K}(t,s) = E_{t}^{K}( \phi) - E_{s}^{K}( \phi) - E_{t}(\nu^{K}, \phi) + E_{s}(\nu^{K}, \phi).
\end{eqnarray*}
We notice that Assumption \ref{assum3} implies due to $\phi \geq 0$ that 
\begin{align} \label{exeq4}
| E_{t}(\nu^{K}, \phi) - E_{s}(\nu^{K}, \phi) | & \leq e^{- \langle \nu_{t}^{K}, \phi \rangle} + e^{-  \langle \nu_{s}^{K},  \phi \rangle}  +   \int_{s}^{t} e^{- \langle \nu_{u}^{K},  \phi \rangle} \bigg(  \parallel b \parallel_{\infty} 
\langle \nu_{u}^{K}, \phi \rangle   \nonumber  \\
  & \hspace*{10mm} +
  \parallel \sigma \parallel_{\infty} \parallel \phi \parallel_{\infty}
  \langle \nu_{u}^{K}, \phi \rangle \nonumber  +  \left(\sup_{x \in \hat{\mathcal{X}}}\int_{0}^{\infty} (u \wedge u^{2}) \Pi(x, {\rm d}u) \right) \parallel \phi \parallel_{\infty}
  \langle \nu_{u}^{K}, \phi \rangle  \nonumber  \\
& \hspace*{10mm}  +
    \parallel A\phi \parallel_{\infty}  \langle \nu_{u}^{K}, \mathbf{1} \rangle +
 \parallel c \parallel_{\infty} \langle \nu_{u}^{K}, \mathbf{1} \rangle \langle \nu_{u}^{K}, \phi \rangle  \bigg) {\rm d} u.
\end{align}

\noindent Recall that $\phi$ is bounded away from zero such that $ \langle \nu_{u}^{K}, \mathbf{1} \rangle \leq C \langle \nu_{u}^{K}, \phi \rangle$
for some constant $C>0.$ Using that $(x\vee x^2) e^{-x}$ is bounded with $x= \langle \nu_{u}^{K}, \phi \rangle$ we thus obtain
\begin{eqnarray*}
\sup_{s,t \in [0,T]}
| E_{t}(\nu^{K}, \phi) - E_{s}(\nu^{K}, \phi) | \leq C_{1},
\end{eqnarray*}
\noindent for some positive constant $C_{1}$ that does not depend on $K$. By using (\ref{exeq2}) above and a little bit of extra effort (similar computations have been done in (\ref{exeq1})), one can check that
\begin{eqnarray*}
\sup_{s,t \in [0,T]}| R_{K}(s,t) | \leq C_{2},
\end{eqnarray*}

\noindent for some positive constant $C_{2}$ that does not depend on $K$. Now we see that for a subsequence $(\nu_{K_n})_{n}$ that converges to a $\nu$ we have, by the Dominated Convergence Theorem, since all the $h_{j}$ are also bounded that
\begin{eqnarray*}
0 = \lim_{n \rightarrow \infty} \mathbf{E} \left[ (E_{t}^{K_n}( \phi) - E_{s}^{K_n}( \phi)) h_{1}(\nu_{s_{1}}^{K}) \cdots h_{l}(\nu_{s_{l}}^{K}) \right]
=  \mathbf{E} \left[ (E_{t}(\nu, \phi) - E_{s}(\nu, \phi)) h_{1}(\nu_{s_{1}}) \cdots h_{l}(\nu_{s_{l}}) \right]
\end{eqnarray*}
which shows (\ref{exeq3}). Thus, $E(\phi) = (E_{t}(\nu, \phi), t \in [0, T])$ defined in (\ref{eq7}) is a $\mathbf{Q}_{\mu}$-martingale.

Next, we show that $M(\phi)$ defined in (\ref{eq10}) is a martingale with the desired decomposition. Our proof follows similar ideas to those of \cite[Theorem 6.1.3]{Dawson1991} or \cite[Theorem 7.13]{Li2011}. We
show that $Z = (Z_{t}(\phi), t \in [0, T])$ given by $Z_{t}(\phi) := e^{- \langle \nu_{t}, \phi \rangle}$ is a special semimartingale, i.e., it has a representation
\begin{eqnarray*}
Z_{t}(\phi) = Z_{0}( \phi) + W_{t} + V_{t}, \hspace*{5mm} t \in [0,T],
\end{eqnarray*}

\noindent where $W = (W_{t}, t \in [0,T])$ is a local martingale with $W_{0} = 0$, $V = (V_{t}, t \in [0,T])$ is a process of locally bounded variation that has locally integrable variation; see, e.g. \cite[p. 85]{Lip1989}. In the following, we abbreviate
\begin{align*}
I_{t}(\phi) & :=  \langle \nu_{t}, p A\phi \rangle  - \langle \nu_{t}, \psi(\phi) \rangle - \int_{\hat{\mathcal{X}}} \left( \int_{\hat{\mathcal{X}}} c(x,y) \nu_{t}({\rm d} y) \right) \phi(x) \nu_{t}({\rm d} x),
\end{align*}

\noindent where $\psi(\phi) := \psi(x, \phi(x))$ for $x \in \hat{\mathcal{X}}$ with $\psi$ defined as in Assumption \ref{assum3}.
We now consider the processes $Y = (Y_{t}, t \in [0,T])$  and $H = (H_{t}, t \in [0,T])$  given by
\begin{equation*}
Y_{t}(\phi)  := \exp \left(  \int_{0}^{t}  I_{s}(\phi) {\rm d} s \right) \quad \text{ and }  \quad
H_{t}(\phi)  := Z_{t}(\phi) \, Y_{t}(\phi).
\end{equation*}
We note that $E_{t}(\nu, \phi)=  Z_{t}(\phi) - Z_{0}( \phi) + \int_0^t I_{s}(\phi) Z_{s}( \phi)  {\rm d} s.$
Using this and integration by parts together with the fact that $Y(\phi)$ is a process of locally bounded variation  we obtain that
\begin{align*}
\int_{0}^{t} Y_{s}(\phi) {\rm d} E_{s}(\nu, \phi) & = \int_{0}^{t} Y_{s}(\phi)  {\rm d} Z_{s}(\phi)
+ \int_{0}^{t} Y_{s}(\phi) I_{s}(\phi)  Z_{s}(\phi){\rm d} s \\
& = \int_{0}^{t} Y_{s}(\phi)  {\rm d} Z_{s}(\phi)
+ \int_{0}^{t}  Z_{s}(\phi){\rm d} Y_{s}(\phi) \\
& = H_{t}(\phi) - H_{0}(\phi)= H_{t}(\phi) - Z_{0}(\phi)
\end{align*}

\noindent is a $\mathbf{Q}_{\mu}$-local martingale. We have $Z_{t}(\phi) = H_{t}(\phi) Y_{t}(\phi)^{-1}$ with $Y_{t}(\phi)^{-1}$ of locally bounded variation and so, again by integration by parts,
\begin{align} \label{eq7ex}
{\rm d} Z_{t}(\phi) & = Y_{t}(\phi)^{-1} {\rm d} H_{t}(\phi) + H_{t}(\phi) d Y_{t}(\phi)^{-1}  \nonumber \\
& = Y_{t}(\phi)^{-1} {\rm d} H_{t}(\phi) - I_{t}(\phi)Z_{t-}(\phi)  {\rm d} t.
\end{align}

\noindent  Then $Z =(Z_{t}( \phi), t \in [0, T])$ is a special semi-martingale (one can follow a similar estimation  procedure as in (\ref{exeq4}) in order to check that the locally bounded variation term of $Z$ has locally integrable variation). On the other hand, using It\^{o}'s formula \cite[Theorem II.32]{Pro2005} 
 we conclude that $\langle \nu_{\cdot}, \phi \rangle = - \log Z_{\cdot}(\phi)$ is also a semi-martingale. Let $\mathcal{M}_{\pm}(\hat{\mathcal{X}})$ denote the space of signed Borel measures on $\hat{\mathcal{X}}$ endowed with the $\sigma$-algebra generated by the mappings $\mu \mapsto \mu(B)$ for all $B \in \mathbb{B}(\hat{\mathcal{X}})$. Let $\mathcal{M}_{\pm}^{\circ}(\hat{\mathcal{X}}) = \mathcal{M}_{\pm}(\hat{\mathcal{X}}) \setminus \{\mathbf{0} \}$ (where $\mathbf{0}$ denotes the null measure). We
define the optional random measure $N({\rm d}s, {\rm d}\mu)$ on $[0, \infty) \times \mathcal{M}_{\pm}^{\circ}(\hat{\mathcal{X}})$ by
\begin{eqnarray*}
N({\rm d}s, {\rm d}\mu) = \sum_{s  > 0} \mathds{1}_{\{\Delta \nu_{s} \neq \mathbf{0}  \}} \delta_{(s, \Delta \nu_{s})} ({\rm d}s, {\rm d}\mu)
\end{eqnarray*}

\noindent where $\Delta \nu_{s} = \nu_{s} - \nu_{s-} \in \mathcal{M}_{\pm}^{\circ}(\hat{\mathcal{X}})$. Let
$\hat{N}({\rm d}s, {\rm d}\mu)$ denote the predictable compensator of $N({\rm d}s, {\rm d}\mu)$ and let $\tilde{N}({\rm d}s, {\rm d}\mu)$ denote the compensated random measure; see  \cite[p. 172]{Lip1989}. It follows that
\begin{eqnarray} \label{eq11}
\langle \nu_{t}, \phi \rangle = \langle \nu_{0}, \phi \rangle + U_{t}(\phi) + M_{t}^{\text{c}}(\phi) + M_{t}^{\text{d}}(\phi),
\end{eqnarray}

\noindent where $U(\phi)  = (U_{t}(\phi), t  \in [0, T])$ is a predictable process with locally bounded variation,  $M^{\text{c}}(\phi) = (M^{\text{c}}_{t}(\phi), t  \in [0, T])$ is a continuous local martingale with increasing process $C(\phi) = (C_{t}(\phi), t  \in [0, T])$ and $M^{\text{d}}(\phi) = (M^{\text{d}}_{t}(\phi), t  \in [0, T])$ given by
\begin{eqnarray*}
M_{t}^{\text{d}}(\phi) = \int_{0}^{t} \int_{\mathcal{M}_{\pm}^{\circ}(\hat{\mathcal{X}})}  \langle \mu , \phi \rangle \tilde{N}({\rm d}s, {\rm d}\mu), \hspace*{5mm} t \in [0, T],
\end{eqnarray*}

\noindent is a purely discontinuous local martingale; see \cite[p. 85]{JaSh1987}. We apply It\^{o}'s formula \cite[Theorem II.32]{Pro2005} to $\exp(-\langle \nu_{\cdot}, \phi \rangle)$, with $\langle \nu_{\cdot}, \phi \rangle$ given by (\ref{eq11}), and we get that
\begin{align} \label{eq12}
 \bar{E}_{t}(\phi) & =  Z_{t}(\phi) - Z_{0}(\phi)  \nonumber \\
 & \hspace*{4mm} +  \int_{0}^{t} Z_{s-}(\phi) \left( {\rm d} U_{s}(\phi)  - \frac{1}{2} {\rm d} C_{s}(\phi) - \int_{\mathcal{M}_{\pm}(\hat{\mathcal{X}})} \left( e^{-\langle \mu, \phi \rangle} -1 +\langle \mu, \phi \rangle
 \right) N({\rm d}s, {\rm d} \mu)  \right)
\end{align}

\noindent is a local martingale. Note that
\begin{eqnarray*}
0 \leq Z_{s-}(\phi) \left( e^{-\langle \Delta \nu_{s}, \phi \rangle} -1 +\langle \Delta \nu_{s}, \phi \rangle  \right) \leq C \left( |\langle \Delta \nu_{s}, \phi \rangle | \wedge |\langle \Delta \nu_{s}, \phi \rangle^{2} | \right)
\end{eqnarray*}

\noindent for some constant $C \geq 0$. According to Theorem I.4.47 of \cite{JaSh1987}, $\sum_{s \leq t} \langle \Delta \nu_{s}, \phi \rangle^{2} < \infty$. Thus the second term in (\ref{eq12}) has finite variation over each finite interval $[0, T]$. Since $Z$ is a special semi-martingale, Proposition I.4.23 of \cite{JaSh1987} implies that
\begin{eqnarray*}
\int_{0}^{t}  Z_{s-}(\phi) \left( e^{-\langle \mu, \phi \rangle} -1 +\langle \mu, \phi \rangle  \right) N({\rm d}s, {\rm d} \mu)
\end{eqnarray*}

\noindent is of locally integrable variation. Thus it is locally integrable. According to Proposition II.1.28 of \cite{JaSh1987},
\begin{eqnarray*}
\int_{0}^{t}  Z_{s-}(\phi) \left( e^{-\langle \mu, \phi \rangle} -1 +\langle \mu, \phi \rangle  \right) \tilde{N}({\rm d}s, {\rm d} \mu)
\end{eqnarray*}

\noindent is a purely discontinuous local martingale. Therefore,
\begin{align} \label{eq12ex}
 \bar{E}_{t}^{\prime}(\phi) & =  Z_{t}(\phi) - Z_{0}(\phi)  \nonumber \\
 & \hspace*{4mm} +  \int_{0}^{t} Z_{s-}(\phi) \left( {\rm d} U_{s}(\phi)  - \frac{1}{2} {\rm d} C_{s}(\phi) - \int_{\mathcal{M}_{\pm}(\hat{\mathcal{X}})} \left( e^{-\langle \mu, \phi \rangle} -1 +\langle \mu, \phi \rangle
 \right) \hat{N}({\rm d}s, {\rm d} \mu)  \right)
\end{align}

\noindent is a local martingale. The uniqueness of canonical decomposition of special semi-martingales (see, e.g. \cite[p. 85]{Lip1989}) allows us to identify the predictable components of locally integrable variation in the two decompositions (\ref{eq7ex}) and (\ref{eq12ex}) to obtain that
\begin{align*}
- Z_{t-}(\phi) I_{t}(\phi){\rm d}s & = Z_{t-}(\phi) \left( -{\rm d} U_{t}(\phi)  + \frac{1}{2} {\rm d} C_{t}(\phi) + \int_{\mathcal{M}_{\pm}(\hat{\mathcal{X}})} \left( e^{-\langle \mu, \phi \rangle} -1 +\langle \mu, \phi \rangle
 \right) \hat{N}({\rm d}t, {\rm d} \mu)  \right).
\end{align*}


\noindent Then
\begin{align} \label{eq300}
- I_{t}(\phi)  & = - U_{t}(\phi)  + \frac{1}{2}  C_{t}(\phi) + \int_{0}^{t}\int_{\mathcal{M}_{\pm}(\hat{\mathcal{X}})} \left( e^{-\langle \mu, \phi \rangle} -1 +\langle \mu, \phi \rangle
 \right) \hat{N}({\rm d}s, {\rm d} \mu).
\end{align}

\noindent It is not difficult to deduce that $U_{t}(\lambda \phi) = \lambda U_{t}(\phi)$ and $C_{t}(\lambda \phi) = \lambda^{2} C_{t}(\phi)$, for $\lambda \in \mathbb{R}_{+}$. Replacing $\phi$ by $\lambda \phi$ in (\ref{eq300}), we have
\begin{align*}
- I_{t}(\lambda \phi) & = - \lambda U_{t}(\phi)  + \frac{\lambda^{2}}{2}  C_{t}(\phi) + \int_{0}^{t}\int_{\mathcal{M}_{\pm}(\hat{\mathcal{X}})} \left( e^{- \lambda \langle \mu, \phi \rangle} -1 + \lambda\langle \mu, \phi \rangle
 \right) \hat{N}({\rm d}s, {\rm d} \mu).
\end{align*}

\noindent This allows to conclude (in the semimartingale representation of $\langle \nu_{t}, \lambda \phi \rangle$):
\begin{eqnarray} \label{eq19}
C_{t}(\phi) = 2\int_{0}^{t} \langle  \nu_{s}, \sigma \phi^{2} \rangle {\rm d}s,
\end{eqnarray}
\begin{align*}
U_{t}(\phi) & =  \int_{0}^{t} \int_{\hat{\mathcal{X}}} \phi(x) \left( b(x) - \int_{\hat{\mathcal{X}}} c(x,y) \nu_{s}({\rm d}y) + p(x)  A\phi(x)  \right) \nu_{s}({\rm d}x){\rm d} s  
\end{align*}
\noindent and
\begin{eqnarray*}
\int_{0}^{t}  \int_{\mathcal{M}_{\pm}(\hat{\mathcal{X}})} \left( e^{-\langle \mu, \phi \rangle} -1 +\langle \mu, \phi \rangle
 \right) \hat{N}({\rm d}s, {\rm d} \mu) = \int_{0}^{t} \int_{\hat{\mathcal{X}}} \int_{0}^{\infty} (e^{-u \phi(x)}-1+u\phi(x)) \Pi(x, {\rm d}u) \nu_{s}({\rm d}x) {\rm d}s.
\end{eqnarray*}

\noindent That is, the jump measure of the process $\nu$ has compensator given by (\ref{eq:n}). In particular, this implies that the jumps of the $\nu$ are almost surely in $\mathcal{M}(\hat{\mathcal{X}})$.

Finally, from the identity (\ref{eq11}), we observe that $M(\phi)= M^{\text{c}}(\phi) + M^{\text{d}}(\phi)$. Therefore, it is enough to show that $M^{\text{c}}(\phi)$ and  $M^{\text{d}}(\phi)$ are actually martingales to conclude that $M(\phi)$ defined in (\ref{eq10}) is a martingale. Following the argument in Section 2.3 of \cite{LeMy2005} we obtain the martingale property of $M^{\text{d}}(\phi)$. We consider $M^{\text{d,1}}(\phi) = (M^{\text{d,1}}_{t}(\phi), t \in [0,T])$ and $M^{\text{d,2}}(\phi) = (M^{\text{d,2}}_{t}(\phi), t \in [0,T])$ given by
\begin{eqnarray*}
M^{\text{d,1}}_{t}(\phi) = \int_{0}^{t} \int_{\mathcal{M}(\hat{\mathcal{X}})}  \langle \mu , \phi \rangle \mathds{1}_{\{\langle \mu , \phi \rangle  \geq 1 \}} \tilde{N}({\rm d}s, {\rm d}\mu) \hspace*{5mm} \text{and} \hspace*{5mm} M^{\text{d,2}}_{t}(\phi) = \int_{0}^{t} \int_{\mathcal{M}(\hat{\mathcal{X}})}  \langle \mu , \phi \rangle  \mathds{1}_{\{\langle \mu , \phi \rangle  < 1 \}} \tilde{N}({\rm d}s, {\rm d}\mu),
\end{eqnarray*}

\noindent for $t \in [0,T]$. We observe that $M^{\text{d}}(\phi) = M^{\text{d,1}}(\phi) + M^{\text{d,2}}(\phi)$ and that
\begin{eqnarray*}
\mathbf{E}\left[ \int_{0}^{t} \int_{\mathcal{M}(\hat{\mathcal{X}})}  \langle \mu , \phi \rangle \mathds{1}_{\{\langle \mu , \phi \rangle  \geq 1 \}} \hat{N}({\rm d}s, {\rm d}\mu) \right] < \infty \hspace*{5mm} \text{and} \hspace*{5mm} \mathbf{E}\left[ \int_{0}^{t} \int_{\mathcal{M}(\hat{\mathcal{X}})}  \langle \mu , \phi \rangle^{2}  \mathds{1}_{\{\langle \mu , \phi \rangle  < 1 \}} \hat{N}({\rm d}s, {\rm d}\mu) \right] <  \infty
\end{eqnarray*}

\noindent due to Assumption \ref{assum3} (iv) for $\phi \in\mathscr {D}(\mathbf{M})$ and where $\hat{N}$ is given by (\ref{eq:n}). Hence, Proposition II.1.28 and Theorem II.1.33 in \cite{JaSh1987} show that $M^{\text{d,1}}(\phi)$ is a martingale and $M^{\text{d,2}}(\phi)$ is a square-integrable martingale with quadratic variation process given by
\begin{eqnarray*}
\left[ M^{\text{d,2}}(\phi) \right]_{t} = \int_{0}^{t} \int_{\mathcal{M}(\hat{\mathcal{X}})}  \langle \mu , \phi \rangle^{2}  \mathds{1}_{\{\langle \mu , \phi \rangle  < 1 \}} \hat{N}({\rm d}s, {\rm d}\mu),
\end{eqnarray*}

\noindent which implies that $M^{\text{d}}(\phi)$ is a martingale. On the other hand, recall that the continuous local martingale $M^{\text{c}}(\phi)$ possesses an increasing process $C(\phi)$ given by (\ref{eq19}), and  such that
\begin{eqnarray*}
\mathbf{E}[C_{t}(\phi)] = \mathbb{E}\left[2\int_{0}^{t} \int_{\hat{\mathcal{X}}} \sigma(x) \phi^{2}(x) \nu_{s}({\rm d}x) {\rm d}s \right] < \infty
\end{eqnarray*}

\noindent by the moment property in Theorem \ref{teo3} (b). Hence Corollary 1.25 in \cite{ReYo1999} implies that $M^{\text{c}}(\phi)$ is a square-integrable martingale. This conclude the proof of Theorem \ref{teo3} (b).

\section{Proof of Theorem \ref{teo4}} \label{Uni}

In this subsection, we will prove that uniqueness holds for solutions of the martingale problem (\ref{eq10}). Our approach is based on the use of a Girsanov type transform and the localization method introduced by Stroock \cite{Stroock1975} in the measure-valued context (see He \cite{He2009}). More precisely, we first introduce in Section \ref{Uni1} the ``killed'' martingale problem associated with the martingale problem (\ref{eq10}). The ``killed'' martingale problem may be seen as the martingale problem (\ref{eq10}) where the randomness is eliminated from the big jumps. Secondly, we develop a Girsanov type theorem in Section \ref{Uni2} for the ``killed'' martingale problem in order to get rid of the non-linearities (caused by the competition) which allows us to deduce uniqueness for the ``killed'' martingale problem. Finally, we develop a localization argument to show that uniqueness of the ``killed'' martingale problem implies uniqueness for the martingale problem (\ref{eq10}). In this section, we always assume that Assumption \ref{assum1} and \ref{assum3} are fulfilled.

It is important to mention that the use of Girsanov type transforms was first applied in the measure-valued diffusions setting by Dawson \cite[Section 5]{Da1978} (or \cite[Theorem 2.3]{Evans1994}). However, in our case Dawson's Girsanov Theorem is not applicable since the measure-valued process possesses jumps. Thus, we shall extend Dawson's result to our setting.

\subsection{The killed martingale problem} \label{Uni1}

In this section, we introduce the killed martingale problem.

The measure-valued process  $\nu^{\prime} \in \mathbb{D}([0,T], \mathcal{M}(\hat{\mathcal{X}}))$, or equivalently its law $\mathbf{Q}^{\prime}_{\mu}$, solves the killed martingale problem at the fixed level $1 < l < + \infty$ if for $\mu \in
\mathcal{M}(\mathcal{X})$, $\mathbf{Q}^{\prime}_{\mu}(\nu_{0}^{\prime} = \mu)=1$,  and for any $\phi \in \mathscr {D}(\mathbf{M})$, the process $M^{\prime}(\phi) = (M_{t}^{\prime}(\phi), t \in [0,T])$ given by
\begin{align} \label{eq20}
M_{t}^{\prime}(\phi) & =  \langle \nu_{t}^{\prime}, \phi \rangle - \langle \nu_{0}^{\prime}, \phi \rangle - \int_{0}^{t} \int_{\hat{\mathcal{X}}}  p(x) A\phi(x)  \nu_{s}^{\prime}({\rm d} x) {\rm d} s \nonumber \\
& \hspace*{5mm} - \int_{0}^{t} \int_{\hat{\mathcal{X}}}  \phi(x)  \left( b(x) - \int_{[l, \infty)} u \Pi(x, {\rm d} u) - \int_{\hat{\mathcal{X}}} c(x,y) \nu_{s}^{\prime}({\rm d}y) \right) \nu_{s}^{\prime}({\rm d}x){\rm d} s  \tag{$\mathbf{M}^{\prime}$}
\end{align}

\noindent is a $\mathbf{Q}^{\prime}_{\mu}$-martingale that admits the decomposition $M^{\prime}(\phi) = M^{\text{c}^{\prime}}(\phi) + M^{\text{d}^{\prime}}(\phi)$, where $M^{\text{c}^{\prime}}(\phi)$ is a continuous martingale with increasing process as in (\ref{eq:quadrativ-var})
%
\noindent
and $M^{{\rm d}^{\prime}}(\phi)$ is a purely discontinuous martingale, defined as in ({\ref{eq:discrete_mg})  by
\begin{eqnarray} \label{revision2}
M^{{\rm d}^{\prime}}_{t}(\phi) = \int_{0}^{t} \int_{\mathcal{M}(\hat{\mathcal{X}})}  \langle \mu , \phi \rangle \tilde{N}^{\prime}({\rm d}s, {\rm d}\mu).
\end{eqnarray}
where $\tilde{N}^{\prime}$ is the compensated random measure of an optional random measure $N^{\prime}$ defined as in (\ref{eq:N}) and where $\tilde{N}^{\prime}$ is characterized analogously to (\ref{eq:n}). The only difference is that in the definition of the compensator 
\begin{eqnarray}\label{eq:n'}
\hat{N}^{\prime}({\rm d}s, {\rm d} \mu) = {\rm ds} \, \hat{n}^{\prime}(\nu_{s}^{\prime}, {\rm d} \mu)
\end{eqnarray}
\noindent in  equation (\ref{eq:n}) is replaced by
\begin{eqnarray*}
\int_{\mathcal{M}(\hat{\mathcal{X}})} f(\mu) \hat{n}^{\prime}(\nu_{s}^{\prime}, {\rm d} \mu) = \int_{\hat{\mathcal{X}}} \int_{(0,l)} f(u \delta_{x}) \Pi(x, {\rm d}u) \nu_{s}^{\prime}({\rm d}x) \hspace*{5mm} \text{for} \hspace*{5mm} f \in B(\mathcal{M}(\hat{\mathcal{X}}), \mathbb{R}).
\end{eqnarray*}
Let us note that for the rest of the section $1 < l < \infty$ is a fixed number and we  will thus not specifically mention 
the dependence on $l.$ For example, we will refer  to  the killed martingale problem at level $l$ simply as the (\ref{eq20}) martingale problem.
Heuristically, we observe from the decomposition of the martingale (\ref{eq20}) that the randomness of ``big''   
jumps of size  larger than $l$ are suppressed by replacing them by the immigration term added in the drift. The latter yields
an additional drift in the compensator of such ``big jumps'' in 
(\ref{eq20}).

It is important to point out that the killed martingale problem also arises as a limit of a sequence of interacting particle systems as described in Section \ref{model}, where Assumption \ref{assum3} (iv) is satisfied with
 \begin{eqnarray*}
\psi^{\prime}(x, z) =  \left(-b(x) + \int_{[l, \infty)} u \Pi(x, {\rm d} u) \right)z + \sigma(x)z^{2} + \int_{(0, l)} (e^{-zu} -1 +zu) \Pi(x, {\rm d}u), \hspace*{5mm} x \in \hat{\mathcal{X}} \hspace*{5mm} \text{and} \hspace*{5mm} z \geq 0;
\end{eqnarray*}

\noindent see proof of Theorem \ref{teo3}.
We now show that each solution of the killed martingale problem has bounded moments of any order.  We write $\mathbf{E}^{\prime}_{\mu}$ for the expectation with respect $\mathbf{Q}^{\prime}_{\mu}$.

\begin{lemma} \label{lemma2}
Suppose that the $\mathcal{M}(\hat{\mathcal{X}})$-valued c\`adl\`ag process $\nu^{\prime} = (\nu_{t}^{\prime}, t \in [0,T])$ with law $\mathbf{Q}^{\prime}_{\mu}$ is a solution of the killed martingale problem. Then for $n \geq 1$,  we have
\begin{eqnarray*}
\mathbf{E}^{\prime}_{\mu}[\langle \nu_{t}^{\prime}, \mathbf{1} \rangle^{n}]  \leq   (\langle \mu, \mathbf{1} \rangle^{n} +1)  e^{C_{n}t}-1, \hspace*{5mm} t \in [0,T],
\end{eqnarray*}

\noindent where $0 < C_{n}< + \infty$ is a constant which depends on $n$.
\end{lemma}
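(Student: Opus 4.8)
The plan is to test the killed martingale problem (\ref{eq20}) against the constant function $\phi=\mathbf{1}=\mathds{1}_{\hat{\mathcal{X}}}$, read off from it a closed real‑valued semimartingale equation for the total mass $X_{t}:=\langle\nu_{t}^{\prime},\mathbf{1}\rangle$, apply It\^o's formula to $Y_{t}:=1+X_{t}^{n}$, bound the drift of $Y$ by $C_{n}Y_{t}$, and finish by Gronwall's lemma after a localization. The structural feature we exploit throughout is that under the killed dynamics every jump of $\nu^{\prime}$ is of the form $u\delta_{x}$ with $u\in(0,l)$, since this is built into the compensator $\hat{n}^{\prime}$; hence the jumps of $X$ are bounded by $l$, and in particular $X$ cannot reach a large level by a single jump.

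First I would check that $\mathbf{1}\in\mathscr{D}(\mathbf{M})$: it is continuous, bounded away from zero, trivially extends to $\hat{\mathcal{X}}$, and it lies in $\mathscr{D}(A_{1})\cap\mathscr{D}(A_{2})$ because $A_{1},A_{2}$ are bounded generators (one even has $A_{1}\mathbf{1}=A_{2}\mathbf{1}=0$, but this will not be needed). Substituting $\phi=\mathbf{1}$ into (\ref{eq20}) exhibits $X$ as a semimartingale
\[
X_{t}=\langle\mu,\mathbf{1}\rangle+M_{t}^{\prime}(\mathbf{1})+\int_{0}^{t}D_{s}\,{\rm d}s,\qquad M^{\prime}(\mathbf{1})=M^{\text{c}^{\prime}}(\mathbf{1})+M^{\text{d}^{\prime}}(\mathbf{1}),
\]
where $M^{\text{c}^{\prime}}(\mathbf{1})$ is continuous with increasing process $2\int_{0}^{t}\langle\nu_{s}^{\prime},\sigma\rangle\,{\rm d}s\le 2\|\sigma\|_{\infty}\int_{0}^{t}X_{s}\,{\rm d}s$, $M^{\text{d}^{\prime}}(\mathbf{1})$ is a purely discontinuous martingale whose compensator $\hat{N}^{\prime}$ charges only jump sizes $u\in(0,l)$ at rate $\nu_{s}^{\prime}({\rm d}x)\Pi(x,{\rm d}u)$, and the drift density $D_{s}$ is the integrand appearing in (\ref{eq20}) with $\phi=\mathbf{1}$. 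Dropping the two nonpositive contributions to $D_{s}$ — the competition term $-\int\int c(x,y)\nu_{s}^{\prime}({\rm d}x)\nu_{s}^{\prime}({\rm d}y)$ and the term $-\langle\nu_{s}^{\prime},\int_{[l,\infty)}u\,\Pi(\cdot,{\rm d}u)\rangle$, the latter finite by Assumption~\ref{assum3}(v) since $u\wedge u^{2}=u$ on $[l,\infty)$ — the remaining part of $D_{s}$ is bounded by $C\,X_{s}$ using boundedness of $b$, $p$, $r$ and $A_{i}\mathbf{1}$.

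Next I would apply It\^o's formula \cite[Theorem II.32]{Pro2005} to $Y_{t}=1+X_{t}^{n}$ (the case $n=1$ is trivial). Its drift density is the sum of three terms: (i) the continuous second‑order term $\tfrac12n(n-1)X_{s-}^{n-2}\cdot 2\langle\nu_{s}^{\prime},\sigma\rangle\le n(n-1)\|\sigma\|_{\infty}X_{s}^{n-1}$; (ii) $nX_{s-}^{n-1}$ times the (positive part of the) drift density $D_{s}$, hence at most $nC\,X_{s}^{n}$; and (iii) the compensated‑jump term $\int_{\hat{\mathcal{X}}}\int_{(0,l)}\big((X_{s-}+u)^{n}-X_{s-}^{n}-nuX_{s-}^{n-1}\big)\Pi(x,{\rm d}u)\,\nu_{s}^{\prime}({\rm d}x)$. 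For (iii) I would use the elementary convexity/Taylor bound $(a+u)^{n}-a^{n}-nua^{n-1}\le C_{n}(u^{n}+u^{2}a^{n-2})$ for $a,u\ge 0$, together with Assumption~\ref{assum3}(v) to control the truncated moments $\int_{(0,l)}(u^{n}+u^{2})\Pi(x,{\rm d}u)$ uniformly in $x$; after integrating the $x$‑variable this makes (iii) at most $C_{n}(X_{s}+X_{s}^{n-1})$. Summing (i)–(iii) and using $x\vee x^{n-1}\le 1+x^{n}$, the full drift density of $Y$ is $\le C_{n}Y_{s}$ with $C_{n}$ depending only on $n$ and the model parameters.

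Finally I would close the estimate by localization, which is where the bounded jumps of the killed process are essential. For $N\ge 1$ put $\tau_{N}:=\inf\{t\ge 0:X_{t}\ge N\}$; since the jumps of $X$ are $<l$, one has $X_{t\wedge\tau_{N}}\le N+l$, so $Y_{t\wedge\tau_{N}}\le 1+(N+l)^{n}$ and the drift of $Y$ over $[0,t\wedge\tau_{N}]$ is bounded, whence $Y_{t\wedge\tau_{N}}-Y_{0}-\int_{0}^{t\wedge\tau_{N}}(\text{drift})\,{\rm d}s$ is a bounded local martingale and therefore a true martingale with zero mean. Taking expectations and using the drift bound gives $\mathbf{E}^{\prime}_{\mu}[Y_{t\wedge\tau_{N}}]\le Y_{0}+C_{n}\int_{0}^{t}\mathbf{E}^{\prime}_{\mu}[Y_{s\wedge\tau_{N}}]\,{\rm d}s$, so Gronwall yields $\mathbf{E}^{\prime}_{\mu}[Y_{t\wedge\tau_{N}}]\le(1+\langle\mu,\mathbf{1}\rangle^{n})e^{C_{n}t}$. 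Since $X_{\tau_{N}}\ge N$ on $\{\tau_{N}\le t\}$, this forces $\mathbf{P}(\tau_{N}\le t)\le(1+\langle\mu,\mathbf{1}\rangle^{n})e^{C_{n}t}/(1+N^{n})\to 0$, so $\tau_{N}\uparrow\infty$ a.s. (exactly as at the end of the proof of Theorem~\ref{teo1}); then Fatou's lemma gives $\mathbf{E}^{\prime}_{\mu}[\langle\nu_{t}^{\prime},\mathbf{1}\rangle^{n}]=\mathbf{E}^{\prime}_{\mu}[X_{t}^{n}]\le\liminf_{N}\mathbf{E}^{\prime}_{\mu}[Y_{t\wedge\tau_{N}}]-1\le(\langle\mu,\mathbf{1}\rangle^{n}+1)e^{C_{n}t}-1$, as claimed. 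The main obstacle is the familiar chicken‑and‑egg integrability issue — the processes in (\ref{eq20}) are only a priori martingales — and the device that resolves it here is precisely the boundedness of the jumps in the \emph{killed} problem, which makes the stopped process bounded so the localized martingale is genuine; a secondary technical point is lining up the convexity estimate for $(a+u)^{n}$ with the finiteness of the truncated moments of $\Pi$.
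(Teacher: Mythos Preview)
Your proof is correct and follows essentially the same route as the paper: apply It\^o's formula to $\langle\nu',\mathbf{1}\rangle^{n}$ with $\phi=\mathbf{1}$, drop the nonpositive competition and large-jump drift terms, control the jump contribution by the binomial/convexity estimate together with $\sup_{x}\int_{(0,l)}u^{2}\,\Pi(x,{\rm d}u)<\infty$, localize with $\tau_{N}$, and close with Gronwall. The only cosmetic difference is that the paper asserts $\tau_{m}\to\infty$ a.s.\ at the outset (since a c\`adl\`ag path on $[0,T]$ is bounded) and passes to the limit afterwards, whereas you deduce $\tau_{N}\uparrow\infty$ from the moment bound itself and finish with Fatou; both arguments are equivalent in this setting.
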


\begin{proof}
We stress that the value of the non-negative constants $C_{n}$ appearing in the proof may change from line to line. Moreover, $C_{n}$ denotes a constant depending only on $n$.   Let $\tau_{m} =\inf \{ t \in [0,T]: \langle \nu^{\prime}_{t}, \mathbf{1} \rangle \geq m \}$, for $m \geq 1$, and note that $\tau_{m} \rightarrow \infty$ as $m \rightarrow \infty$ 
$\mathbf{Q}^{\prime}_{\mu}$-almost surely. We also observe that $(M_{t \wedge \tau_{m}}^{\prime}(\phi), t \in [0,T])$ is a $\mathbf{Q}^{\prime}_{\mu}$-martingale.  Setting $\phi \equiv 1$, It\^{o}'s formula implies that for $n \geq 1$ and $t \in [0,T],$
\begin{align*}
& \mathbf{E}_{\mu}^{\prime} \left[ \langle \nu_{t \wedge \tau_{m}}^{\prime}, \mathbf{1} \rangle^{n} \right ] 
\leq \langle \mu, \mathbf{1} \rangle^{n} +
n \mathbf{E}_{\mu}^{\prime} \left[  \int_{0}^{t} \int_{\hat{\mathcal{X}}}  \left( b(x) - \int_{[l, \infty)} u \Pi(x, {\rm d} u) \right)  \langle \nu_{s \wedge \tau_{m}}^{\prime}, \mathbf{1} \rangle^{n-1} \nu_{s \wedge \tau_{m}}^{\prime}({\rm d}x){\rm d} s \right] \\
& \hspace*{5mm} +   n(n-1) \mathbf{E}_{\mu}^{\prime}\left[  \int_{0}^{t} \int_{\hat{\mathcal{X}}}  \sigma(x)  \langle \nu_{s \wedge \tau_{m}}^{\prime}, \mathbf{1} \rangle^{n-2} \nu_{s \wedge \tau_{m}}^{\prime}({\rm d}x) {\rm d} s \right] \\
& \hspace*{5mm} + \mathbf{E}_{\mu}^{\prime} \left[  \int_{0}^{t}  \int_{\hat{\mathcal{X}}}
\int_{(0, l)} \Big( (\langle \nu_{s \wedge \tau_{m}}^{\prime}, \mathbf{1} \rangle + u)^{n} - \langle \nu_{s \wedge \tau_{m}}^{\prime}, \mathbf{1} \rangle^{n}  - n u \langle \nu_{s \wedge \tau_{m}}^{\prime}, \mathbf{1} \rangle^{n-1} \Big) \Pi(x, {\rm d} u) \nu_{s \wedge \tau_{m}}^{\prime}({\rm d}x)  {\rm d} s \right],
\end{align*}

\noindent  where we have used that $A\phi \equiv 0$ and omitted the negative competition term on the right hand side. Since $b \in C_{\partial}(\mathcal{X}, \mathbb{R})$, $\sigma \in C_{\partial}(\mathcal{X}, \mathbb{R}_{+})^{+}$, $c \in C_{\partial}(\mathcal{X} \times \mathcal{X}, \mathbb{R}_{+})$ and $\sup_{x \in \hat{\mathcal{X}}} \int_{0}^{\infty} (u \wedge u^{2}) \Pi(x, {\rm d} u) < + \infty$ (by Assumptions \ref{assum3}), we conclude with the  binomial formula 
that there is a positive constant $C_{n}$ such that
\begin{eqnarray*}
\mathbf{E}^{\prime}_{\mu}[\langle \nu_{t \wedge \tau_{m}}^{\prime}, \mathbf{1} \rangle^{n}] & \leq  & \langle \mu, \mathbf{1} \rangle^{n} + C_{n} \left(  \sum_{k=1}^{n} \int_{0}^{t} \mathbf{E}^{\prime}_{\mu}\left[ \langle \nu_{s \wedge \tau_{m}}^{\prime}, \mathbf{1} \rangle^{k} \right] {\rm d} s \right) \\
& \leq & \langle \mu, \mathbf{1} \rangle^{n} + C_{n} \int_{0}^{t} \left(1 + \mathbf{E}^{\prime}_{\mu}\left[ \langle \nu_{s \wedge \tau_{m}}^{\prime}, \mathbf{1} \rangle^{n} \right]  \right){\rm d} s ,
\end{eqnarray*}

\noindent for $t \in [0,T]$ . Then, the moment estimate follows by Gronwall's Lemma, first for $\langle \nu_{t \wedge \tau_{m}}^{\prime}, \mathbf{1} \rangle$ and by letting $m \rightarrow \infty$ and using monotone convergence for $\langle \nu_{t}^{\prime}, \mathbf{1} \rangle$.
\end{proof}

\subsection{Dawson's Girsanov type Theorem} \label{Uni2}

We next develop a Dawson's Girsanov type theorem. Recall that for each $0 < T  < +\infty$, the process $\nu^{\prime}=(\nu_{t}^{\prime}, t \in [0, T]) \in \mathbb{D}([0,T], \mathcal{M}(\hat{\mathcal{X}}))$, with law $\mathbf{Q}_{\mu}^{\prime}$, denotes a solution of the (\ref{eq20}) martingale problem. Informally, we want to find a measure $\mathbb{Q}_{\mu}$ under which, for all $\phi \in \mathscr {D}(\mathbf{M})$,
\begin{eqnarray*}
M_{t}^{\prime}(\phi) - \int_{0}^{t} \int_{\hat{\mathcal{X}}}   \int_{\hat{\mathcal{X}}} \phi(x) c(x,y) \nu_{s}^{\prime}({\rm d}y) \nu_{s}^{\prime}({\rm d}x){\rm d} s
\end{eqnarray*}

\noindent is a $\mathbb{Q}_{\mu}$-martingale.

To achieve this we will use the fact that the continuous part $M^{\text{c}^{\prime}}(\phi) = (M^{\text{c}^{\prime}}_{t}(\phi), t \in [0, T])$ of the martingale $M^{\prime}(\phi)$ can be expressed as an integral with respect to an orthogonal martingale measure (see \cite[Section 7.1]{Da1978}). As in Walsh \cite[Chapter 2]{Wa1984}, we write
\begin{eqnarray*}
M^{ \text{c}^{\prime}}_{t}(\phi) = \int_{0}^{t} \int_{\hat{\mathcal{X}}}  \phi(x) W({\rm d}s, {\rm d}x), \hspace*{5mm} t \geq 0,
\end{eqnarray*}

\noindent where $W({\rm d}s, {\rm d}x)$ is an orthogonal continuous martingale measure with covariance given by
\begin{eqnarray*}
{\rm d}[W({\rm d}x), W({\rm d}y)]_{t} = R(\nu_{t}^{\prime}, {\rm d}x, {\rm d}y){\rm d}t,
\end{eqnarray*}

\noindent and $R$ is defined by $R(\mu, {\rm d}x, {\rm d}y) = 2 \sigma(x) \delta_{y}({\rm d}x) \mu({\rm d}y)$, for $\mu \in \mathcal{M}(\hat{\mathcal{X}})$.

We consider the continuous local martingale $L = (L_{t}, t \in [0, T])$ given by
\begin{eqnarray} \label{eq14}
L_{t} = \int_{0}^{t} \int_{\hat{\mathcal{X}}}
a( \nu_{s}^{\prime},x) W({\rm d}s, {\rm d}x),
\end{eqnarray}
\noindent where
\begin{eqnarray*}
a( \nu_{s}^{\prime},x) = \left( \int_{\hat{\mathcal{X}}}  c(x,y) \nu_{s}^{\prime}({\rm d}y) \right) (2 \sigma(x))^{-1} \hspace*{5mm} \text{for} \hspace*{5mm} x \in \hat{\mathcal{X}}
\end{eqnarray*}
\noindent Recall here that the function $\sigma$ is bounded away from zero and that the competition kernel $c(x,y)$ is bounded such that
\begin{eqnarray} \label{revision1}
a( \nu_{s}^{\prime},x) \leq C \langle \nu_{s}^{\prime}, \mathbf{1} \rangle \hspace*{5mm} \text{uniformly over} \hspace*{3mm} x \in \hat{\mathcal{X}}. 
\end{eqnarray}

\noindent Then, the stochastic linear equation
\begin{eqnarray*}
\mathfrak{z}_{t} = 1 + \int_{0}^{t} \mathfrak{z}_{s}
{\rm d} L_s
\end{eqnarray*}

\noindent has a unique nonnegative solution (see for example \cite{Do1970}) known as the Dol\'ean-Dade exponential,
\begin{eqnarray*}
\mathfrak{z}_{t} = \exp \left( \int_{0}^{t} \int_{\hat{\mathcal{X}}} a( \nu_{s}^{\prime}, x) W({\rm d}s, {\rm d}x) -  \frac{1}{2}\int_{0}^{t} \int_{\hat{\mathcal{X}}} \int_{\hat{\mathcal{X}}}  a(\nu_{s}^{\prime}, x) a(\nu_{s}^{\prime}, y) R(\nu_{s}^{\prime}, {\rm d} x, {\rm d} y){\rm d} s \right).
\end{eqnarray*}

\noindent It is well-known that $\mathfrak{z}$ is a nonnegative local martingale (see \cite{Do1970}), and therefore it is a supermartingale with $\mathbb{E}[\mathfrak{z}_{t}] \leq 1$ for any $t \geq 0$. Moreover, if there exists $T >0$ such that $\mathbb{E}[\mathfrak{z}_{T}] = 1$ then $\mathfrak{z} = (\mathfrak{z}_{t}, t \in [0,T])$ is a martingale. The martingale property plays an important role in many applications. In particular,
$\mathfrak{z}_{T}$ usually plays the role of the Random-Nykodym derivative of one probability measure with respect to another, and thus, this will allow us to
generalize Dawson's Girsanov Theorem \cite{Da1978} in our setting.

\begin{theorem} \label{teo5}
Under Assumption \ref{assum3} such that $\sigma \in C_{\partial}(\mathcal{X}, \mathbb{R}_{+})^{+}$, we have that
\begin{itemize}
\item[(a)] The process $\mathfrak{z} = (\mathfrak{z}_{t}, t \in [0,T])$ is a martingale for any $T >0$.

\item[(b)] Moreover, under the probability measure $\mathbb{Q}_{\mu}$ defined by
\begin{eqnarray*}
\frac{{\rm d} \mathbb{Q}_{\mu}}{{\rm d} \mathbf{Q}_{\mu}^{\prime}} = \mathfrak{z}_{T},
\end{eqnarray*}

\noindent the process $\nu^{\prime}=(\nu_{t}^{\prime}, t \in [0, T])$ solves the following martingale problem: For $\mu \in \mathcal{M}(\mathcal{X})$, \\
$\mathbb{Q}_{\mu}(\nu_{0}^{\prime} = \mu)=1$ and for any $\phi \in \mathscr {D}(\mathbf{M})$, the process $\bar{M}(\phi) = (\bar{M}_{t}(\phi), t \in [0,T])$ given by
\begin{eqnarray*}
\bar{M}_{t}(\phi) = M_{t}^{\prime}(\phi) - \int_{0}^{t} \int_{\hat{\mathcal{X}}}   \int_{\hat{\mathcal{X}}} \phi(x) c(x,y) \nu_{s}^{\prime}({\rm d}y) \nu_{s}^{\prime}({\rm d}x){\rm d} s
\end{eqnarray*}

\noindent is a $\mathbb{Q}_{\mu}$-martingale.

\item[(c)] Furthermore, $\bar{M}(\phi)$ admits the decomposition $\bar{M}(\phi) = \bar{M}^{\text{c}}(\phi) + \bar{M}^{\text{d}}(\phi)$, where $\bar{M}^{\text{c}}(\phi)$ is a continuous martingale with increasing process as in (\ref{eq:quadrativ-var}) and
%
$\bar{M}^{{\rm d}}(\phi)$ is a purely discontinuous martingale defined as in (\ref{eq:discrete_mg}). That is, $\bar{M}^{\text{c}}(\phi)$ has increasing process
\begin{eqnarray*}
 2 \int_{0}^{t} \int_{\hat{\mathcal{X}}} \sigma(x) \phi^{2}(x) \nu_{s}^{\prime}({\rm d}x) {\rm d}s,
\end{eqnarray*}
\noindent and $\bar{M}^{{\rm d}}(\phi)$ has optional random measure given by
$ \bar{N}({\rm d}s, {\rm d}\mu) :=  N^{\prime}({\rm d}s, {\rm d}\mu)$ on $[0, \infty) \times \mathcal{M}(\hat{\mathcal{X}})$, where its compensator and compensated random measure are
\begin{eqnarray*}
\hat{\bar{N}}({\rm d}s, {\rm d} \mu) :=  \hat{N}^{\prime}({\rm d}s, {\rm d} \mu) \hspace*{5mm} \text{and} \hspace*{5mm} \tilde{\bar{N}}({\rm d}s, {\rm d} \mu) := \tilde{N}^{\prime}({\rm d}s, {\rm d} \mu),
\end{eqnarray*}
\noindent respectively, with $N^{\prime}$, $\hat{N}^{\prime}$ and $\tilde{N}^{\prime}$ defined in  (\ref{revision2}) and (\ref{eq:n'}).
%
%
%
%
\end{itemize}
\end{theorem}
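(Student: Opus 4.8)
The plan is to prove (a) by a localization argument and then to read off (b)--(c) from Girsanov's theorem once the true martingale property of $\mathfrak{z}$ is secured. The structural fact I will exploit is that in the killed martingale problem (\ref{eq20}) the jumps of $\nu^{\prime}$ are bounded by $l$ (the compensator $\hat{n}^{\prime}$ is supported on $\{u\delta_{x}:u\in(0,l)\}$), so stopping the total mass at a high level keeps it bounded, while by (\ref{revision1}) the integrand $a(\nu^{\prime}_{s},\cdot)$ is controlled by the total mass.

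For (a), introduce $\tau_{m}=\inf\{t\in[0,T]:\langle\nu^{\prime}_{t},\mathbf{1}\rangle\geq m\}$, which tends a.s. to $+\infty$ (indeed $\tau_{m}=+\infty$ for $m$ large, since $\nu^{\prime}$ is càdlàg on the compact $[0,T]$); as the jumps of $\langle\nu^{\prime}_{\cdot},\mathbf{1}\rangle$ lie in $(0,l)$, one has $\sup_{t\geq0}\langle\nu^{\prime}_{t\wedge\tau_{m}},\mathbf{1}\rangle\leq m+l$ a.s. Hence $\langle L\rangle_{T\wedge\tau_{m}}=2\int_{0}^{T\wedge\tau_{m}}\int_{\hat{\mathcal{X}}}\sigma(x)a(\nu^{\prime}_{s},x)^{2}\nu^{\prime}_{s}({\rm d}x){\rm d}s$ is bounded, so $\mathfrak{z}_{\cdot\wedge\tau_{m}}=\mathcal{E}(L_{\cdot\wedge\tau_{m}})$ is a uniformly integrable martingale (Novikov's condition is trivially met) and $\mathbf{E}^{\prime}_{\mu}[\mathfrak{z}_{T\wedge\tau_{m}}]=1$; define $\mathbb{Q}^{(m)}_{\mu}$ by ${\rm d}\mathbb{Q}^{(m)}_{\mu}/{\rm d}\mathbf{Q}^{\prime}_{\mu}=\mathfrak{z}_{T\wedge\tau_{m}}$. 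Girsanov's theorem for a continuous exponential density process (see \cite[Chapter III]{JaSh1987}, and \cite{Da1978}, \cite{Wa1984} for the martingale-measure formulation) shows that under $\mathbb{Q}^{(m)}_{\mu}$ the process $\nu^{\prime}$ solves, up to $\tau_{m}$, the martingale problem obtained from (\ref{eq20}) by replacing $M^{\prime}(\phi)$ with $\bar{M}(\phi)=M^{\prime}(\phi)-\langle M^{\text{c}^{\prime}}(\phi),L\rangle$; the point is that, using $R(\mu,{\rm d}x,{\rm d}y)=2\sigma(x)\delta_{x}(y)\mu({\rm d}y)$ and the definition of $a$, $\langle M^{\text{c}^{\prime}}(\phi),L\rangle_{t}=\int_{0}^{t}\int_{\hat{\mathcal{X}}}\int_{\hat{\mathcal{X}}}\phi(x)c(x,y)\nu^{\prime}_{s}({\rm d}y)\nu^{\prime}_{s}({\rm d}x){\rm d}s$ exactly cancels the competition drift in $M^{\prime}(\phi)$, while the jump part is unchanged because the density process is continuous. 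Taking $\phi=\mathbf{1}$ (so $A_{1}\mathbf{1}=A_{2}\mathbf{1}=0$) and using that $b+\int_{[l,\infty)}u\,\Pi(\cdot,{\rm d}u)-pr$ is bounded, $\bar{M}_{\cdot\wedge\tau_{m}}(\mathbf{1})$ is a bounded, hence true, $\mathbb{Q}^{(m)}_{\mu}$-martingale, and Gronwall's lemma gives $\mathbf{E}_{\mathbb{Q}^{(m)}_{\mu}}[\langle\nu^{\prime}_{t\wedge\tau_{m}},\mathbf{1}\rangle]\leq\langle\mu,\mathbf{1}\rangle e^{C_{0}T}=:\Gamma$, with $C_{0},\Gamma$ independent of $m$. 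Since $\langle\nu^{\prime}_{\tau_{m}},\mathbf{1}\rangle\geq m$ on $\{\tau_{m}\leq T\}$,
\[
\Gamma\;\geq\;\mathbf{E}_{\mathbb{Q}^{(m)}_{\mu}}\big[\langle\nu^{\prime}_{T\wedge\tau_{m}},\mathbf{1}\rangle\,\mathds{1}_{\{\tau_{m}\leq T\}}\big]\;\geq\;m\,\mathbb{Q}^{(m)}_{\mu}(\tau_{m}\leq T)\;=\;m\,\mathbf{E}^{\prime}_{\mu}\big[\mathfrak{z}_{\tau_{m}}\mathds{1}_{\{\tau_{m}\leq T\}}\big],
\]
so $\mathbf{E}^{\prime}_{\mu}[\mathfrak{z}_{\tau_{m}}\mathds{1}_{\{\tau_{m}\leq T\}}]\leq\Gamma/m$. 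Writing $1=\mathbf{E}^{\prime}_{\mu}[\mathfrak{z}_{T\wedge\tau_{m}}]=\mathbf{E}^{\prime}_{\mu}[\mathfrak{z}_{T}\mathds{1}_{\{\tau_{m}>T\}}]+\mathbf{E}^{\prime}_{\mu}[\mathfrak{z}_{\tau_{m}}\mathds{1}_{\{\tau_{m}\leq T\}}]$ and letting $m\to\infty$ (monotone convergence, $\mathds{1}_{\{\tau_{m}>T\}}\uparrow1$) yields $\mathbf{E}^{\prime}_{\mu}[\mathfrak{z}_{T}]\geq1$; with the supermartingale inequality $\mathbf{E}^{\prime}_{\mu}[\mathfrak{z}_{T}]\leq1$ this proves $\mathfrak{z}$ is a martingale on $[0,T]$.

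Given (a), set ${\rm d}\mathbb{Q}_{\mu}/{\rm d}\mathbf{Q}^{\prime}_{\mu}=\mathfrak{z}_{T}$; since $\mathbb{Q}_{\mu}\ll\mathbf{Q}^{\prime}_{\mu}$, $\nu^{\prime}$ remains in $\mathbb{D}([0,T],\mathcal{M}(\hat{\mathcal{X}}))$. Applying Girsanov's theorem now on all of $[0,T]$ exactly as above gives: $\bar{M}^{\text{c}}(\phi):=M^{\text{c}^{\prime}}(\phi)-\langle M^{\text{c}^{\prime}}(\phi),L\rangle$ is a continuous $\mathbb{Q}_{\mu}$-local martingale with unchanged bracket $2\int_{0}^{t}\langle\nu^{\prime}_{s},\sigma\phi^{2}\rangle{\rm d}s$; the purely discontinuous part $M^{\text{d}^{\prime}}(\phi)$ and its compensator $\hat{N}^{\prime}$ (from (\ref{eq:n'})) are unaffected; hence $\bar{M}(\phi)=M^{\prime}(\phi)-\int_{0}^{t}\int_{\hat{\mathcal{X}}}\int_{\hat{\mathcal{X}}}\phi(x)c(x,y)\nu^{\prime}_{s}({\rm d}y)\nu^{\prime}_{s}({\rm d}x){\rm d}s=\bar{M}^{\text{c}}(\phi)+M^{\text{d}^{\prime}}(\phi)$ is a $\mathbb{Q}_{\mu}$-local martingale with the asserted decomposition. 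To pass from local to genuine martingale, I first note that the moment bound of Lemma~\ref{lemma2}, whose proof discards the competition term, applies verbatim to the transformed (competition-free, hence linear) martingale problem, giving $\mathbf{E}_{\mathbb{Q}_{\mu}}[\langle\nu^{\prime}_{t},\mathbf{1}\rangle^{n}]<\infty$ for all $n\geq1$; then, arguing as at the end of the proof of Theorem~\ref{teo3}(b), $\bar{M}^{\text{c}}(\phi)$ is a square-integrable martingale because $\mathbf{E}_{\mathbb{Q}_{\mu}}\big[\int_{0}^{t}\langle\nu^{\prime}_{s},\sigma\phi^{2}\rangle{\rm d}s\big]<\infty$, and splitting $M^{\text{d}^{\prime}}(\phi)$ into its jumps of size $\geq1$ and $<1$ and invoking Assumption~\ref{assum3}(v) shows $M^{\text{d}^{\prime}}(\phi)$ is a martingale; hence so is $\bar{M}(\phi)$. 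This establishes (b), and the identifications of the increasing process and of the (compensated) jump measure are precisely (c).

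The main obstacle is part (a): the true martingale property of the Dol\'ean--Dade exponential $\mathfrak{z}$. A global application of Novikov's or Kazamaki's criterion is not available here, because $a(\nu^{\prime}_{s},\cdot)$ grows with the total mass $\langle\nu^{\prime}_{s},\mathbf{1}\rangle$, which (owing to the possibly heavy-tailed jump measure $\Pi$) has only polynomial, not exponential, moments. The localization-plus-change-of-measure scheme above circumvents this by reducing the exponential-moment requirement to the much weaker, Gronwall-accessible statement that the total mass does not explode and grows at most exponentially in $\mathbb{Q}^{(m)}_{\mu}$-expectation; the boundedness of the jumps of the killed martingale problem by $l$ is what keeps the stopped total mass bounded and makes the whole argument run cleanly.
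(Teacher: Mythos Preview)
Your proposal is correct and follows essentially the same strategy as the paper: localize via $\tau_{m}=\inf\{t:\langle\nu'_{t},\mathbf{1}\rangle\geq m\}$, verify Novikov on the stopped process, pass to the localized measure $\mathbb{Q}^{(m)}_{\mu}$, obtain a uniform (in $m$) moment bound on the total mass under $\mathbb{Q}^{(m)}_{\mu}$, and deduce $\mathbf{E}'_{\mu}[\mathfrak{z}_{T}]=1$; parts (b)--(c) then follow from standard Girsanov theory, the continuity of the density process ensuring that the jump compensator is unchanged.

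There is one modest streamlining worth flagging. The paper, after establishing the Gronwall bound $\mathbb{E}_{\mathbb{Q}^{(m)}_{\mu}}[\langle\nu'_{t\wedge\tau_{m}},\mathbf{1}\rangle]\leq C$, invokes Burkholder--Davis--Gundy to upgrade this to a bound on $\mathbb{E}_{\mathbb{Q}^{(m)}_{\mu}}[\sup_{t}\langle\nu'_{t\wedge\tau_{m}},\mathbf{1}\rangle]$ before applying Markov's inequality to $\{\tau_{m}\leq T\}$. You observe instead that the fixed-time bound at $t=T$ already suffices, since $\langle\nu'_{T\wedge\tau_{m}},\mathbf{1}\rangle\geq m$ on $\{\tau_{m}\leq T\}$; this avoids the BDG step entirely. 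Your explicit use of the bounded-jump property of the killed problem (giving $\sup_{t}\langle\nu'_{t\wedge\tau_{m}},\mathbf{1}\rangle\leq m+l$ a.s.) also makes the Novikov verification and the ``bounded, hence true martingale'' claim for $\bar{M}_{\cdot\wedge\tau_{m}}(\mathbf{1})$ transparent, whereas the paper leaves these implicit. Otherwise the two arguments coincide.
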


\begin{proof}
First, we prove point (a). We fix $T>0$ and let $\tau_{n} = \inf \left\{ t \in [0,T]:  \langle \nu_{t}^{\prime}, \mathbf{1} \rangle  \geq n   \right \}$
for $n \geq 1$. Note that $\tau_{n} \rightarrow \infty$ as $n \rightarrow \infty$,
$\mathbf{Q}_{\mu}'$-almost surely.
We now note that by Assumption \ref{assum3} and (\ref{revision1}) we have that
\begin{eqnarray*}
\mathbb{E} \left[\exp \left( \frac{1}{2} \int_{0}^{T \wedge \tau_{n}} \int_{\hat{\mathcal{X}}} \sigma(x) a(\nu_{s}^{\prime}, x)^{2}  \nu_{s}^{\prime}({\rm d} x) {\rm d} s \right) \right] < \infty.
\end{eqnarray*}
This allows us to conclude by the so-called Novikov condition \cite{Nov1973}
 that $(\mathfrak{z}_{(t \wedge \tau_{n})}, t \in [0, T])$ is a $\mathbf{Q}_{\mu}'$-martingale.

We write $\mathbf{E}_{\mu}^{\prime}$ and $\mathbb{E}_{\mu}$ for the expectation with respect $\mathbf{Q}_{\mu}^{\prime}$ and $\mathbb{Q}_{\mu}$. Our aim is to show that $\mathbf{E}_{\mu}^{\prime}[\mathfrak{z}_{T}] = 1$ for which it is enough to prove that  $ \mathbb{Q}_{\mu}( \tau_{n} \leq T) \rightarrow 0$ as $n \rightarrow \infty$. By \cite[Theorem III.39, p. 134]{Pro2005}, we get that under $\mathbb{Q}_{\mu}$, the process $(\bar{M}_{t \wedge \tau_{n}}(\phi), t \in [0,T])$ is a martingale. By the same argument as in the proof of Lemma \ref{lemma2}, we have that
 \begin{eqnarray} \label{eq15}
\mathbb{E}_{\mu} \left[ \langle \nu_{t \wedge \tau_{n}}^{\prime}, \mathbf{1} \rangle^{m} \right] \leq   ( \langle \mu, \mathbf{1} \rangle^{m} +1)   e^{C_{m}t}-1, \hspace*{5mm} m \geq 1,
\end{eqnarray}

 \noindent for $t \in [0,T]$. This implies that  $(\bar{M}_{t \wedge \tau_{n}}(\phi), t \in [0,T])$ is a square integrable $\mathbb{Q}_{\mu}$-martingale. \\

\noindent {\bf Claim.} $\bar{M}_{\cdot \wedge \tau_{n}}(\phi)$ can under $\mathbb{Q}_{\mu}$ furthermore  be decomposed into a continuous and purely discontinuous martingale $\bar{M}_{\cdot \wedge \tau_{n}}(\phi) = \bar{M}^{\text{c}}_{\cdot \wedge \tau_{n}}(\phi) + \bar{M}^{\text{d}}_{\cdot \wedge \tau_{n}}(\phi)$ which are up to the stopping time $\tau_{n}$ characterized as in (c) of Theorem \ref{teo5}. That is, $\bar{M}^{\text{c}}_{\cdot \wedge \tau_{n}}(\phi)$ has increasing process
\begin{eqnarray*}
 2 \int_{0}^{t \wedge \tau_{n}} \int_{\hat{\mathcal{X}}} \sigma(x) \phi^{2}(x) \nu_{s}^{\prime}({\rm d}x) {\rm d}s,
\end{eqnarray*}

\noindent and $\bar{M}^{{\rm d}}_{\cdot \wedge \tau_{n}}(\phi)$ has optional random measure given by
$ \bar{N}_{n}({\rm d}s, {\rm d}\mu) := \mathds{1}_{\{ s \leq \tau_{n} \}} \bar{N}({\rm d}s, {\rm d}\mu)$
on $[0, \infty) \times \mathcal{M}(\hat{\mathcal{X}})$, where
its compensator and compensated random measure are
\begin{eqnarray*}
\hat{\bar{N}}_{n}({\rm d}s, {\rm d} \mu) := \mathds{1}_{\{ s \leq \tau_{n} \}} \hat{\bar{N}}({\rm d}s, {\rm d} \mu) \hspace*{5mm} \text{and} \hspace*{5mm} \tilde{\bar{N}}_{n}({\rm d}s, {\rm d} \mu) := \mathds{1}_{\{ s \leq \tau_{n} \}} \tilde{\bar{N}}({\rm d}s, {\rm d} \mu),
\end{eqnarray*}

\noindent respectively. We postpone the proof of this claim 
to the end of the proof.

Recall that we want to show that $ \mathbb{Q}_{\mu}( \tau_{n} \leq T) \rightarrow 0$ as $n \rightarrow \infty$. For this we take $m =1$ in the inequality (\ref{eq15}) and we integrate to obtain that
  \begin{eqnarray} \label{eq17}
\int_{0}^{T} \mathbb{E}_{\mu} \left[ \langle \nu_{t \wedge \tau_{n}}^{\prime}, \mathbf{1} \rangle \right]  {\rm d} t \leq   \frac{1}{C_{1}} (\langle \mu, \mathbf{1} \rangle +1)  (e^{C_{1}T}-1) - T.
\end{eqnarray}

\noindent The Burkholder-Davis-Gundy inequality applied to the $\mathbb{Q}_{\mu}$-martingale $(\bar{M}_{t \wedge \tau_{n}}(\mathbf{1}), t \in [0,T])$ gives that there is a constant $C^{\prime} > 0$ (the value of $C^{\prime}$ changing from line to line) such that
\begin{eqnarray*}
\mathbb{E}_{\mu}  \left[ \left( \sup_{t \in [0,T]} \bar{M}_{t \wedge \tau_{n}}(\mathbf{1}) \right)^{2} \right] \leq C^{\prime} \mathbb{E}_{\mu}\left[ \langle\bar{M}(\mathbf{1})\rangle_{T \wedge \tau_{n}} \right].
\end{eqnarray*}
On the other hand, it is not difficult to see that
\begin{eqnarray*}
\langle\bar{M}(\mathbf{1})\rangle_{T \wedge \tau_{n}}=  2 \int_{0}^{T\wedge \tau_{n}} \int_{\hat{\mathcal{X}}} \sigma(x)  \nu_{s}^{\prime}({\rm d}x) {\rm d}s + \int_{0}^{T\wedge \tau_{n}} \int_{\hat{\mathcal{X}}} \int_{(0,l)} u^{2} \Pi(x, {\rm d}u) \nu_{s }^{\prime}({\rm d}x) {\rm d}s,
\end{eqnarray*}

\noindent under $\mathbb{Q}_{\mu}$. Then, Assumption \ref{assum3} implies that
\begin{eqnarray} \label{eq24}
\mathbb{E}_{\mu} \left[ \left( \sup_{t \in [0,T]} \bar{M}_{t \wedge \tau_{n}}(\mathbf{1}) \right)^{2} \right] \leq C^{\prime} \mathbb{E}_{\mu} \left[  \int_{0}^{T} \langle \nu_{s \wedge \tau_{n}}^{\prime}, \mathbf{1} \rangle {\rm d}s  \right]  \leq C^{\prime} \left( \frac{1}{C_{1}} (\langle \mu, \mathbf{1} \rangle +1)  (e^{C_{1}T}-1) - T \right).
\end{eqnarray}
We observe that
\begin{eqnarray*}
\langle \nu_{t \wedge \tau_{n}}^{\prime}, \mathbf{1} \rangle \leq \bar{M}_{t \wedge \tau_{n}}(\mathbf{1} ) +  \langle \mu, \mathbf{1} \rangle    + \int_{0}^{t\wedge \tau_{n}} \int_{\hat{\mathcal{X}}}    \left( b(x)  - \int_{[l, \infty)} u \Pi(x, {\rm d} u)\right) \nu_{s}^{\prime}({\rm d}x){\rm d} s
\end{eqnarray*}

\noindent Then, 
(\ref{eq17}) and (\ref{eq24}) as well as Assumption \ref{assum3} imply that
\begin{eqnarray} \label{eq16}
\mathbb{E}_{\mu}  \big[  \sup_{t \in [0,T]} \langle \nu_{t \wedge \tau_{n}}^{\prime}, \mathbf{1} \rangle \big] \leq \bar{C} \left( C^{\prime}, C_{1}, T, \langle \mu, \mathbf{1} \rangle  \right).
\end{eqnarray}

\noindent where $\bar{C} \left( C^{\prime}, C_{1}, T, \langle \mu, \mathbf{1} \rangle  \right)$ is a positive constant. We observe that
\begin{eqnarray*}
\mathbb{Q}_{\mu}( \tau_{n} \leq T)  \leq 
\mathbb{Q}_{\mu} \left( \sup_{t \in [0,T]} \langle \nu_{t \wedge \tau_{n}}^{\prime}, \mathbf{1} \rangle  \geq n \right),
\end{eqnarray*}

\noindent and we notice that the Markov inequality together with the estimation (\ref{eq16}) implies that
\begin{eqnarray*}
\mathbb{Q}_{\mu} \left( \sup_{t \in [0,T]} \langle \nu_{t \wedge \tau_{n}}^{\prime}, \mathbf{1} \rangle  \geq n \right) \rightarrow 0, \hspace*{5mm} \text{as} \hspace*{5mm} n \rightarrow \infty.
\end{eqnarray*}

Finally, we have shown that $\mathbb{Q}_{\mu}( \tau_{n} \leq T) \rightarrow 0$ as $n \rightarrow \infty$ and so, we deduce that $\mathbf{E}_{\mu}^{\prime}[\mathfrak{z}_{T}] = 1$. Therefore, $\mathfrak{z}$ is a martingale as required. \\

\noindent {\bf Proof of Claim.} We first check that the random measure $\hat{\bar{N}}_{n}$ is a $\mathbb{Q}_{\mu}$-compensator of the optional random measure $\bar{N}_{n}$. Let $\theta_{n}$ be a stopping time such that $\theta_{n} \leq \tau_{n}$ for $n \geq 1$. Let $B \subset \mathcal{M}(\hat{X}) \setminus \{ {\bf 0}\}$ be a measurable set. Then,
\begin{align*}
\mathbb{E}_{\mu} \left[ \int_{0}^{t} \int_{\mathcal{M}(\hat{X})} \mathds{1}_{\{ s \leq \theta_{n} \}} \mathds{1}_{\{ \eta \in B \}} \langle \eta, \phi \rangle \bar{N}_{n}({\rm d}s, {\rm d} \eta) \right]& =  \mathbb{E}_{\mu} \left[ \int_{0}^{t \wedge \tau_{n}} \int_{\mathcal{M}(\hat{X})} \mathds{1}_{\{ s \leq \theta_{n} \}} \mathds{1}_{\{ \eta \in B \}} \langle \eta, \phi \rangle \bar{N}({\rm d}s, {\rm d} \eta) \right] \\
& = \mathbf{E}_{\mu} \left[ \mathfrak{z}_{t \wedge \tau_{n}} \int_{0}^{t \wedge \tau_{n}} \int_{\mathcal{M}(\hat{X})} \mathds{1}_{\{ s \leq \theta_{n} \}} \mathds{1}_{\{ \eta \in B \}} \langle \eta, \phi \rangle N^{\prime}({\rm d}s, {\rm d} \eta) \right] \\
& = \mathbf{E}_{\mu} \left[ \mathfrak{z}_{t \wedge \tau_{n}} \int_{0}^{t \wedge \tau_{n}} \int_{\mathcal{M}(\hat{X})} \mathds{1}_{\{ s \leq \theta_{n} \}} \mathds{1}_{\{ \eta \in B \}} \langle \eta, \phi \rangle \tilde{N}^{\prime}({\rm d}s, {\rm d} \eta) \right] \\
 & \hspace*{10mm} + \mathbb{E}_{\mu} \left[ \int_{0}^{t \wedge \tau_{n}} \int_{\mathcal{M}(\hat{X})} \mathds{1}_{\{ s \leq \theta_{n} \}} \mathds{1}_{\{ \eta \in B \}} \langle \eta, \phi \rangle \hat{N}^{\prime}({\rm d}s, {\rm d} \eta) \right] \\
 & = \mathbb{E}_{\mu} \left[ \int_{0}^{t} \int_{\mathcal{M}(\hat{X})} \mathds{1}_{\{ s \leq \theta_{n} \}} \mathds{1}_{\{ \eta \in B \}} \langle \eta, \phi \rangle \hat{\bar{N}}_{n}({\rm d}s, {\rm d} \eta) \right],
\end{align*}

\noindent where we have used the fact that
\begin{align*}
 & \mathbf{E}_{\mu} \left[ \mathfrak{z}_{t \wedge \tau_{n}} \int_{0}^{t \wedge \tau_{n}} \int_{\mathcal{M}(\hat{X})} \mathds{1}_{\{ s \leq \theta_{n} \}} \mathds{1}_{\{ \eta \in B \}} \langle \eta, \phi \rangle \tilde{N}^{\prime}({\rm d}s, {\rm d} \eta) \right] \\
& \hspace*{20mm} = \mathbf{E}_{\mu} \left[ \left \langle \mathfrak{z}_{ \cdot \wedge \tau_{n}}, \int_{0}^{ \cdot \wedge \tau_{n}} \int_{\mathcal{M}(\hat{X})} \mathds{1}_{\{ s \leq \theta_{n} \}} \mathds{1}_{\{ \eta \in B \}} \langle \eta, \phi \rangle \tilde{N}^{\prime}({\rm d}s, {\rm d} \eta) \right \rangle_{t} \right] = 0.
\end{align*}

\noindent This follows from well-known results of square integrable martingales (recall Lemma \ref{lemma2}), and the fact that $\mathds{1}_{ \cdot \wedge \tau_{n}} \tilde{N}^{\prime}({\rm d}s, {\rm d} \eta)$ has bounded variation while $\mathfrak{z}_{ \cdot \wedge \tau_{n}}$ does not. Then our first claim follows in view of the arbitrariness of $\theta_{n}$ and $B$; see \cite[Chapter 4, Section 5, p. 222]{Lip1989} and \cite[Proof of Theorem III.3.17]{JaSh1987}.

Recall that $\mathfrak{z}_{\cdot \wedge \tau_{n}}$,
$M^{\text{c}^{\prime}}_{\cdot \wedge \tau_{n}}(\phi)$ and $M^{\text{d}^{\prime}}_{\cdot \wedge \tau_{n}}(\phi)$ are square integrable martingales (recall also Lemma \ref{lemma2}). Moreover, their quadratic characteristic are defined as
\begin{eqnarray*}
 \left \langle \mathfrak{z}_{\cdot \wedge \tau_{n}}, M^{\text{c}^{\prime}}_{\cdot \wedge \tau_{n}}(\phi) \right \rangle_{t} & = & \int_{0}^{t \wedge \tau_{n}} \int_{\hat{\mathcal{X}}} \phi(x) \mathfrak{z}_{s} a(\nu_{s}^{\prime}, x) R(\nu_{s}^{\prime}, {\rm d} x, {\rm d} x ) {\rm d} s \\
 & = & \int_{0}^{t \wedge \tau_{n}} \int_{\hat{\mathcal{X}}} \phi(x) \mathfrak{z}_{s} \left( \int_{\hat{\mathcal{X}}} c(x,y) \nu_{s}^{\prime}({\rm d} y) \right) \nu_{s}^{\prime}({\rm d}x) {\rm d} s,
\end{eqnarray*}

\noindent and $\left \langle  \mathfrak{z}_{\cdot \wedge \tau_{n}}, M^{\text{d}^{\prime}}_{\cdot \wedge \tau_{n}}(\phi) \right \rangle_{t} = 0$. By \cite[Theorem 2, Chapter 4, Section 5]{Lip1989} (see also \cite[Theorem 39, p. 134]{Pro2005}), we know that
\begin{eqnarray*}
\bar{M}^{\text{c}^{\prime}}_{\cdot \wedge \tau_{n}}(\phi) = M^{\text{c}^{\prime}}_{\cdot \wedge \tau_{n}}(\phi) - \int_{0}^{\cdot} \mathfrak{z}_{(s \wedge \tau_{n})-}^{-1} {\rm d} \left \langle \mathfrak{z}_{\cdot \wedge \tau_{n}}, M^{\text{c}^{\prime}}_{\cdot \wedge \tau_{n}}(\phi) \right \rangle_{s}
\end{eqnarray*}

\noindent and
\begin{eqnarray*}
\bar{M}^{\text{d}^{\prime}}_{\cdot \wedge \tau_{n}}(\phi) = M^{\text{d}^{\prime}}_{\cdot \wedge \tau_{n}}(\phi) - \int_{0}^{\cdot} \mathfrak{z}_{(s \wedge \tau_{n})-}^{-1} {\rm d} \left \langle \mathfrak{z}_{\cdot \wedge \tau_{n}}, M^{\text{d}^{\prime}}_{\cdot \wedge \tau_{n}}(\phi) \right \rangle_{s} = M^{\text{d}^{\prime}}_{\cdot \wedge \tau_{n}}(\phi),
\end{eqnarray*}

\noindent are continuous and purely discontinuous $\mathbb{Q}_{\mu}$-martingales. Therefore, we conclude that under $\mathbb{Q}_{\mu}$ the martingales $\bar{M}^{\text{c}^{\prime}}_{\cdot \wedge \tau_{n}}(\phi)$ and $\bar{M}^{\text{d}^{\prime}}_{\cdot \wedge \tau_{n}}(\phi)$ obey the desired representations and it shows our {\bf Claim}.

Finally, the points (b) and (c) follow from \cite[Theorem 2, Chapter 4, Section 5]{Lip1989} and a similar argument as the {\bf Claim}.
\end{proof}

The following proposition shows uniqueness for the killed martingale problem.

\begin{proposition} \label{prop3}
Under Assumption \ref{assum1} and \ref{assum3}, there is a unique solution of the killed martingale problem, or equivalently, for the (\ref{eq20}) martingale problem.
\end{proposition}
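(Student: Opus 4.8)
The plan is to use the Girsanov-type Theorem \ref{teo5} to strip off the competition non-linearity, thereby reducing the (\ref{eq20}) martingale problem to an interaction-free one that is covered by the classical well-posedness theory of superprocesses, and then to invert the transform. Existence of a solution is already provided by the construction as a large-population limit of particle systems (apply the proof of Theorem \ref{teo3} with $\psi$ replaced by the $\psi^{\prime}$ displayed after (\ref{eq20})), so the point is uniqueness. Let $\mathbf{Q}_{\mu}^{\prime}$ be any solution of the killed martingale problem; by Lemma \ref{lemma2} it has finite moments of every order, which is exactly what Theorem \ref{teo5} requires. First note that the Dol\'ean-Dade exponential $\mathfrak{z}_{T}$ is a fixed, measure-independent measurable functional of the trajectory $(\nu_{s}^{\prime})_{s \in [0,T]}$: the orthogonal martingale measure $W$ is built path-wise from the continuous martingale part of $\nu^{\prime}$, and $a(\nu_{s}^{\prime},x) = \big(\int_{\hat{\mathcal{X}}} c(x,y)\nu_{s}^{\prime}({\rm d}y)\big)(2\sigma(x))^{-1}$, $R(\nu_{s}^{\prime},{\rm d}x,{\rm d}x) = 2\sigma(x)\nu_{s}^{\prime}({\rm d}x)$ are explicit functionals of $\nu_{s}^{\prime}$. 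By Theorem \ref{teo5}(a), $\mathfrak{z}$ is a $\mathbf{Q}_{\mu}^{\prime}$-martingale, so $\mathbb{Q}_{\mu}$ defined by ${\rm d}\mathbb{Q}_{\mu}/{\rm d}\mathbf{Q}_{\mu}^{\prime} = \mathfrak{z}_{T}$ is a probability measure, and by Theorem \ref{teo5}(b),(c) under $\mathbb{Q}_{\mu}$ the process $\nu^{\prime}$ solves the interaction-free martingale problem: for every $\phi \in \mathscr{D}(\mathbf{M})$,
\begin{align*}
\bar{M}_{t}(\phi) & = \langle \nu_{t}^{\prime}, \phi \rangle - \langle \nu_{0}^{\prime}, \phi \rangle - \int_{0}^{t} \int_{\hat{\mathcal{X}}} p(x)\big(A_{1}\phi(x) + A_{2}\phi(x)\big) \nu_{s}^{\prime}({\rm d}x)\,{\rm d}s \\
& \hspace*{5mm} + \int_{0}^{t} \int_{\hat{\mathcal{X}}} \phi(x)\Big( b(x) + \int_{[l,\infty)} u\,\Pi(x,{\rm d}u) - p(x)r(x) \Big) \nu_{s}^{\prime}({\rm d}x)\,{\rm d}s
\end{align*}
is a martingale, with continuous part of increasing process (\ref{eq:quadrativ-var}) and purely discontinuous part with compensator $\hat{N}^{\prime}$ given by (\ref{eq:n'}).

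This is precisely the martingale problem characterizing the measure-valued branching process \emph{without} competition, with spatial motion generated by the bounded operator $\mathcal{A}\phi := p\,(A_{1}\phi + A_{2}\phi)$ on $C_{b}(\hat{\mathcal{X}},\mathbb{R})$ and branching mechanism
\begin{eqnarray*}
\psi^{\prime}(x,z) = \Big( b(x) + \int_{[l,\infty)} u\,\Pi(x,{\rm d}u) - p(x)r(x)\Big) z + \sigma(x) z^{2} + \int_{(0,l)} (e^{-zu} - 1 + zu)\,\Pi(x,{\rm d}u).
\end{eqnarray*}
Here $\mathcal{A}$ satisfies the positive maximum principle (if $\phi$ attains its maximum at $x_{0}$ with $\phi(x_{0})\geq 0$ then $A_{1}\phi(x_{0})\leq 0$, $A_{2}\phi(x_{0})\leq 0$ and $p(x_{0})\geq 0$) and annihilates constants, hence it generates a conservative Feller semigroup on the compact space $\hat{\mathcal{X}}$; moreover $\sigma \in C_{\partial}(\mathcal{X},\mathbb{R}_{+})^{+}$, the linear part of $\psi^{\prime}$ is bounded and continuous, and $\sup_{x \in \hat{\mathcal{X}}} \int_{(0,l)}(u\wedge u^{2})\,\Pi(x,{\rm d}u) < \infty$ by Assumption \ref{assum3}. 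For such data the measure-valued branching process exists and is unique in law, and its law is determined by the above martingale problem, by the classical theory of superprocesses (see, e.g., \cite[Section 6.1]{Dawson1991}, \cite[Chapter 7]{Li2011}, \cite{Fitz1992}); concretely, the one-dimensional Laplace functionals are computed from the log-Laplace evolution equation associated with $\mathcal{A}$ and $\psi^{\prime}$ --- uniquely solvable since $\mathcal{A}$ is bounded --- and the Markov property supplied by the martingale problem together with the fact that $\mathscr{D}(\mathbf{M})$ separates points of $\mathcal{M}(\hat{\mathcal{X}})$ promotes this to uniqueness of the law on $\mathbb{D}([0,T],\mathcal{M}(\hat{\mathcal{X}}))$. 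In particular $\mathbb{Q}_{\mu}$ is the same for every solution $\mathbf{Q}_{\mu}^{\prime}$ of the killed martingale problem.

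To conclude, observe that $\mathfrak{z}_{T}$, being the exponential of a continuous local martingale, is strictly positive $\mathbf{Q}_{\mu}^{\prime}$-almost surely, and hence also $\mathbb{Q}_{\mu}$-almost surely since $\mathbb{Q}_{\mu} \ll \mathbf{Q}_{\mu}^{\prime}$; therefore ${\rm d}\mathbf{Q}_{\mu}^{\prime}/{\rm d}\mathbb{Q}_{\mu} = \mathfrak{z}_{T}^{-1}$, so $\mathbf{Q}_{\mu}^{\prime}$ is recovered from $\mathbb{Q}_{\mu}$ and the universal path functional $\mathfrak{z}_{T}$. Since $\mathbb{Q}_{\mu}$ does not depend on the chosen solution, neither does $\mathbf{Q}_{\mu}^{\prime}$, which proves uniqueness; the asserted equivalence with the (\ref{eq20}) martingale problem is then immediate.

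I expect the main difficulty to lie in the second step: one must check carefully that the generality allowed here --- a mutation-weighted generator $\mathcal{A} = p(A_{1}+A_{2})$, a fully type-dependent branching mechanism $\psi^{\prime}$ with a possibly infinite L\'evy measure near $0$, and the restriction of the martingale problem to the strictly positive test functions of $\mathscr{D}(\mathbf{M})$ --- is indeed covered by a suitably adapted version of the classical superprocess well-posedness results. A lesser but genuine technical point is to justify that $\mathfrak{z}_{T}$ is a measure-independent measurable functional of the path and is almost surely positive, so that the Girsanov transform is invertible.
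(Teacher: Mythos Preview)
Your proposal is correct and follows essentially the same route as the paper: use the Girsanov Theorem \ref{teo5} to remove the competition term, identify the resulting martingale problem under $\mathbb{Q}_{\mu}$ with that of a classical measure-valued branching process (for which well-posedness is provided by \cite[Theorem 6.1.3]{Dawson1991} and \cite{Fitz1992}), and then invert the equivalent change of measure to conclude uniqueness of $\mathbf{Q}_{\mu}^{\prime}$. The paper's own proof is a terse two-sentence pointer to exactly this argument; your version merely spells out the verification of the hypotheses of the classical theory and the invertibility of the transform in more detail.
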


\begin{proof}
The result is a direct consequence of Theorem \ref{teo5} (see for example \cite[Section 5]{Da1978} or \cite[Theorem 2.3]{Evans1994} in a similar setting). More precisely, under the measure $\mathbb{Q}_{\mu}$ (that is equivalent to the measure $\mathbf{Q}^{\prime}_{\mu}$), the martingale problem $\bar{M}(\phi) = (\bar{M}_{t}(\phi), t \in [0,T])$ corresponds to the one of general measure-valued Markov branching processes studied by Dawson \cite[Theorem 6.1.3]{Dawson1991} and Fitzsimmons \cite{Fitz1992}.
\end{proof}

\subsection{Localization method} \label{Uni3}

In this section, we will consider the localization procedure introduced by Stroock \cite{Stroock1975} and generalized to the measure-valued context by He \cite{He2009} in order to show that uniqueness for the martingale problem (\ref{eq10}) follows from the uniqueness of the martingale problem (\ref{eq20}). We briefly describe the idea. First, we show that each solution of the martingale problem (\ref{eq10}) behaves in the same way as the solution of the killed martingale problem until it has a ``big jump''. Since we have proven uniqueness for the
killed martingale problem, the solution of the martingale problem (\ref{eq10}) is uniquely determined before it has a ``big jump''. Furthermore, we show that when a ``big jump'' event happens, the jump size is also uniquely determined. Finally, we prove by induction that the distribution of the branching particle system corresponding to the martingale problem (\ref{eq10}) is uniquely determined, since after the first ``big jump'' event happens, the system also behaves in the same way as the solution of the killed martingale problem until the second ``big jump'' event happens. We point out that the arguments and results of this section are similar to those of  \cite{He2009, Stroock1975}, thus we are going to provide as many details as necessary for clarity and the convenience of the reader but leave out cumbersome steps that are entirely analogous. In order for this to be possible, we translate our set-up to the notation used in \cite{He2009}. We first start with some preliminary notation and definitions.

\begin{definition} \label{def2}
For $\mu \in \mathcal{M}(\hat{\mathcal{X}})$ and $0< T< +\infty$, we say that a stochastic process $\nu \in \mathbb{D}([0,T], \mathcal{M}(\hat{\mathcal{X}}))$, or equivalently its law $\mathbf{P}_{\mu}$, solves the $(\mathscr{L}, \mathscr{D}(\mathscr{L}), \mu)$-martingale problem if $\mathbf{P}_{\mu}(\nu_{0} = \mu) = 1$ and
\begin{eqnarray*}
 F(\nu_{t}) - F(\nu_{0}) - \int_{0}^{t} \mathscr{L} F(\nu_{s}) {\rm d}s, \hspace*{6mm} t \in [0,T],
\end{eqnarray*}

\noindent is a  $\mathbf{P}_{\mu}$-martingale, for all $F$ in some appropriate domain of functions on $ \mathscr{D}(\mathscr{L}) \subset B(\mathcal{M}(\hat{\mathcal{X}}), \mathbb{R})$.
\end{definition}

We consider the following two operators,
\begin{align*}
\mathscr{L}F(\mu) & =   \int_{\mathcal{\hat{X}}} p(x) A \left( \frac{\delta F(\mu)}{\delta \mu (x)} \right) \mu({\rm d} x) 
 +  \int_{\mathcal{\hat{X}}}   \left( b(x) - \int_{\mathcal{\hat{X}}} c(x,y)  \mu({\rm d}y) \right)\frac{\delta F(\mu)}{\delta \mu (x)} \mu({\rm d}x) \nonumber \\
& \hspace*{2mm} + \int_{\mathcal{\hat{X}}} \int_{0}^{\infty} \left( F(\mu + u \delta_{x})  - F(\mu) - \frac{\delta F(\mu)}{\delta \mu (x)} u \right) \Pi(x, {\rm d}u) \mu({\rm d}x)  +  \int_{\mathcal{\hat{X}}}   \sigma(x) \frac{\delta^{2} F(\mu)}{\delta \mu (x)^{2}} \mu({\rm d}x)
\end{align*}

\noindent and
\begin{align*}
\mathscr{L}^{\prime}F(\mu) & =   \int_{\mathcal{\hat{X}}} p(x) A \left( \frac{\delta F(\mu)}{\delta \mu (x)} \right) \mu({\rm d} x)   + \int_{\mathcal{\hat{X}}}   \left( b(x)  -\int_{[l, \infty)} u \Pi(x, {\rm d}u) - \int_{\mathcal{\hat{X}}} c(x,y)  \mu({\rm d}y)\right) \frac{\delta F(\mu)}{\delta \mu (x)} \mu({\rm d}x) \nonumber \\
& \hspace*{2mm} + \int_{\mathcal{\hat{X}}} \int_{(0,l)} \left( F(\mu + u \delta_{x})  - F(\mu) - \frac{\delta F(\mu)}{\delta \mu (x)} u \right) \Pi(x, {\rm d}u) \mu({\rm d}x)  +  \int_{\mathcal{\hat{X}}}   \sigma(x) \frac{\delta^{2} F(\mu)}{\delta \mu (x)^{2}} \mu({\rm d}x)
\end{align*}

\noindent defined for some appropriate functions $F$ in $B(\mathcal{M}(\mathcal{\hat{X}}), \mathbb{R})$, where the so-called variational derivatives are defined by
\begin{eqnarray*}
\frac{\delta F(\mu)}{\delta \mu (x)} := \lim_{h \downarrow 0} \frac{F(\mu + h \delta_{x}) - F(\mu)}{h} = \frac{\partial}{\partial h} F(\mu + h \delta_{x}) |_{h = 0}, \hspace*{5mm} x \in \hat{\mathcal{X}}
\end{eqnarray*}

\noindent and
\begin{eqnarray*}
\frac{\delta^{2} F(\mu)}{\delta \mu (x) \delta \mu(y)} := \frac{\partial^{2}}{\partial h_{1} \partial h_{2}} F(\mu + h_{1} \delta_{x} + h_{2} \delta_{y}) |_{h_{1} = h_{2} = 0}, \hspace*{5mm} x,y \in \hat{\mathcal{X}}.
\end{eqnarray*}

\noindent For $n \geq 0$, we define the function $F_{n}^{\phi, \lambda, \theta}$ on $\mathcal{M}(\hat{\mathcal{X}})$ by
\begin{eqnarray*}
F_{n}^{\phi, \lambda, \theta}(\mu) := \sum_{i=1}^{n} \theta_{i} e^{- \lambda_{i} \langle \mu, \phi_{i} \rangle},
\end{eqnarray*}

\noindent where $\mu \in \mathcal{M}(\hat{\mathcal{X}})$, $\lambda_{i} \in \mathbb{R}_{+}$, $\theta_{i} \in \mathbb{R}$ and $\phi_{i} \in C_{\partial}(\mathcal{X}, \mathbb{R}_{+})$, for $i = 1, \dots, n$. We also define the family of functions
\begin{eqnarray*}
 \mathscr{E} := \bigcup_{n =1}^{\infty}\left \{ F_{n}^{\phi, \lambda, \theta}(\mu): \mu \in \mathcal{M}(\hat{\mathcal{X}}), \hspace*{2mm} \lambda_{i} \in \mathbb{R}_{+}, \hspace*{2mm} \theta_{i} \in \mathbb{R} \hspace*{3mm}  \text{and} \hspace*{3mm} \phi_{i} \in C_{\partial}(\mathcal{X}, \mathbb{R}_{+}), \hspace*{3mm}  \text{for} \hspace*{2mm} i = 1, \dots, n   \right \}
\end{eqnarray*}

\noindent that is a subset of $B(\mathcal{M}(\mathcal{\hat{X}}), \mathbb{R})$.

\begin{remark} \label{Rem2}
 Recall that $\hat{\mathcal{X}}$ is the one-point compactification of $\mathcal{X}$. Thus $\mathcal{M}(\hat{\mathcal{X}})$ equipped with the weak topology is also compact. On the other hand, recall from the proof of tightness (Proposition \ref{prop2}) that $\mathscr{E}$ separates points on $\mathcal{M}(\hat{\mathcal{X}})$ (this follows from Dynkin's $\pi$-$\lambda$ Theorem). Moreover, $\mathscr{E}$ has the non-vanishing property, i.e, for every $\mu \in \mathcal{M}(\hat{\mathcal{X}})$ there exists $F_{n}^{\phi, \lambda, \theta} \in \mathscr{E}$ such that $F_{n}^{\phi, \lambda, \theta}(\mu) \neq 0$. Therefore the Stone-Weierstrass Theorem (see for example \cite[Appendix A7, Theorem 5, p. 393]{Ash1972}) implies that $\mathscr{E}$ is dense in $C_{0}(\mathcal{M}(\mathcal{\hat{X}}), \mathbb{R})$.
\end{remark}

We make the link between the martingale problems (\ref{eq10}), (\ref{eq20}) and Definition \ref{def2}.

\begin{corollary} \label{cor1}
Let
\begin{eqnarray*}
\mathscr{D}(\mathscr{L}) = \bigcup_{n=0}^{\infty} \left \{ F_{n}^{\phi, \lambda, \theta} \in \mathscr{E}: \phi_{i} \in \mathscr {D}(\mathbf{M}), \hspace*{3mm} \text{for} \hspace*{2mm} i=1, \dots, n \right \}.
\end{eqnarray*}

\noindent Then, $\mathbf{Q}_{\mu}$ (resp. $\mathbf{Q}_{\mu}^{\prime}$) solves the $(\mathscr{L}, \mathscr{D}(\mathscr{L}), \mu)$-martingale problem (resp. $(\mathscr{L}^{\prime}, \mathscr{D}(\mathscr{L}), \mu)$-martingale problem) if and only if it solves the martingale problem (\ref{eq10}) (resp. (\ref{eq20})).
\end{corollary}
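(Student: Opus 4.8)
The plan is to show the equivalence by expanding the generator $\mathscr{L}$ (resp. $\mathscr{L}^{\prime}$) applied to the exponential functionals $F_n^{\phi,\lambda,\theta}$ and matching the result against the semimartingale decomposition extracted in the proof of Theorem \ref{teo3}(b). Since both martingale problems and the definitions in Definition \ref{def2} are linear in $F$, it suffices to treat a single exponential $F(\mu) = e^{-\lambda\langle\mu,\phi\rangle}$ with $\lambda \in \mathbb{R}_+$ and $\phi \in \mathscr{D}(\mathbf{M})$; the general case in $\mathscr{E}$ then follows by taking finite linear combinations and using that $\mathscr{D}(\mathbf{M})$ is closed under the operations needed. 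First I would record the variational derivatives: for $F(\mu) = e^{-\lambda\langle\mu,\phi\rangle}$ one has $\frac{\delta F(\mu)}{\delta\mu(x)} = -\lambda\phi(x)e^{-\lambda\langle\mu,\phi\rangle}$ and $\frac{\delta^2 F(\mu)}{\delta\mu(x)^2} = \lambda^2\phi^2(x)e^{-\lambda\langle\mu,\phi\rangle}$, and also $F(\mu+u\delta_x)-F(\mu) = e^{-\lambda\langle\mu,\phi\rangle}(e^{-\lambda u\phi(x)}-1)$. Substituting these into the definition of $\mathscr{L}$ gives exactly
\[
\mathscr{L}F(\mu) = e^{-\lambda\langle\mu,\phi\rangle}\Big( \langle\mu, p(A_1+A_2)\phi\rangle\cdot(-\lambda) + \lambda\langle\mu, (b - pr)\phi\rangle + \lambda\int c(x,y)\mu(dy)\phi(x)\mu(dx) + \langle\mu,\sigma\lambda^2\phi^2\rangle + \langle\mu, \Psi_\lambda(\cdot)\rangle\Big),
\]
where $\Psi_\lambda(x) = \int_0^\infty(e^{-\lambda u\phi(x)}-1+\lambda u\phi(x))\Pi(x,du)$; this is, up to the sign conventions, the drift $-I_t(\lambda\phi)$ computed in equations (\ref{eq300})–(\ref{eq19}) of the proof of Theorem \ref{teo3}(b).

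The heart of the argument is then a standard Itô/Dynkin-type equivalence. For the "only if" direction: if $\nu$ solves the martingale problem (\ref{eq10}) with the stated decomposition, apply Itô's formula (\cite[Theorem II.32]{Pro2005}) to $e^{-\lambda\langle\nu_t,\phi\rangle}$ using the semimartingale representation (\ref{eq11}) of $\langle\nu_t,\phi\rangle$ together with the identification of $U$, $C$ and the jump compensator $\hat N$ from (\ref{eq19}) onward; the finite-variation part of the resulting decomposition of $e^{-\lambda\langle\nu_t,\phi\rangle}$ is precisely $\int_0^t\mathscr{L}F(\nu_s)\,ds$, and the local-martingale part is a genuine martingale by the moment bound $\sup_{t\le T}\mathbf E_{\mathbf Q_\mu}[\langle\nu_t,\mathbf 1\rangle] < \infty$ from Theorem \ref{teo3}(b)(1) combined with the boundedness of $\phi$, $\sigma$, $b$, $r$, $c$ and $\sup_x\int(u\wedge u^2)\Pi(x,du)<\infty$ (Assumption \ref{assum3}), exactly as in the estimate (\ref{exeq4}). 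This shows $\mathbf Q_\mu$ solves the $(\mathscr{L},\mathscr{D}(\mathscr{L}),\mu)$-martingale problem. For the "if" direction: given that $F_n^{\phi,\lambda,\theta}(\nu_t) - F_n^{\phi,\lambda,\theta}(\nu_0) - \int_0^t\mathscr{L}F_n^{\phi,\lambda,\theta}(\nu_s)ds$ is a martingale for all such $F$, one differentiates in $\lambda$ at $\lambda=0$ (or, more robustly, uses the homogeneity $U_t(\lambda\phi)=\lambda U_t(\phi)$, $C_t(\lambda\phi)=\lambda^2 C_t(\phi)$ as exploited between (\ref{eq300}) and (\ref{eq19})) to recover that $\langle\nu_t,\phi\rangle$ has the semimartingale decomposition with drift, continuous-martingale bracket, and jump compensator matching (\ref{eq10}), which is the assertion. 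The same computation with $(0,l)$ in place of $(0,\infty)$ and the extra term $\int_{[l,\infty)}u\,\Pi(x,du)$ absorbed into the drift yields the equivalence between $\mathscr{L}^{\prime}$ and the martingale problem (\ref{eq20}), using the moment bounds of Lemma \ref{lemma2} instead.

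The main obstacle I anticipate is purely bookkeeping rather than conceptual: one must verify that the class $\mathscr{D}(\mathscr{L})$ is large enough for these equivalences to be stated cleanly, i.e. that restricting to $\phi_i\in\mathscr{D}(\mathbf{M}) = C_\partial(\mathcal X,\mathbb R_+)^+\cap\mathscr D(A_1)\cap\mathscr D(A_2)$ still separates points of $\mathcal M(\hat{\mathcal X})$ and captures all the information needed to pin down the generator (this is where the non-vanishing property and Stone--Weierstrass density noted in Remark \ref{Rem2} enter), and that the variational-derivative formalism of \cite{He2009} is correctly matched to the integral expressions appearing in (\ref{eq10}) and (\ref{eq20}). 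One should also check that $\mathscr{L}F_n^{\phi,\lambda,\theta}$ is well-defined and satisfies the growth bound $|F| + |\mathscr{L}F| \le C(1+\langle\mu,\mathbf 1\rangle)$ needed to invoke the martingale criterion, which again follows from (\ref{keyineq}) in Remark \ref{remfunc} and Assumption \ref{assum3}. None of these steps is difficult, so the corollary is essentially a translation lemma; I would present the $\lambda$-differentiation computation in full and indicate the rest briefly.
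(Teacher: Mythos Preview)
Your proposal is correct and follows exactly the route the paper indicates: the paper's own proof is a single sentence stating that the result is a consequence of Theorem~\ref{teo3}(b) and its proof together with an application of It\^o's formula, which is precisely what you spell out in detail (computing variational derivatives of the exponential functionals, applying It\^o to $e^{-\lambda\langle\nu_t,\phi\rangle}$ via the semimartingale representation (\ref{eq11}), and using the $\lambda$-homogeneity argument between (\ref{eq300}) and (\ref{eq19}) for the converse). Your elaboration is a faithful expansion of the paper's one-line proof.
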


\begin{proof}
The result is a consequence of Theorem \ref{teo3} (b) and its proof as well as an application of It\^{o}'s formula.
\end{proof}

By Proposition \ref{prop3}, we henceforth assume throughout this section that for
$\mu \in \mathcal{M}(\mathcal{X})$, there is a unique solution to the martingale problem (\ref{eq20}) which is the killed martingale problem.

Let $\omega = (\omega_{t}, t \in [0,T])$ denote the coordinate process of $\mathbb{D}([0,T], \mathcal{M}(\hat{\mathcal{X}}))$ and let $\mathbf{Q}^{\prime}$ denote the unique solution of the killed martingale problem. For $0 \leq s < T < + \infty$ and $\mu \in \mathcal{M}(\mathcal{X})$ let $\mathbf{Q}^{\prime}_{s, \mu} = \mathbf{Q}^{\prime}(\cdot | \omega_{s} = \mu)$. Hence $\mathbf{Q}^{\prime}_{s, \mu}$ is also a unique solution of the killed martingale problem starting from time $s$ at the value $\mu$. We set
\begin{eqnarray*}
\Omega = \mathbb{D}([0,T], \mathcal{M}(\hat{\mathcal{X}})), \hspace*{6mm} \mathcal{F}_{t} = \sigma(\omega_{s}: 0 \leq s \leq t \leq T) \hspace*{6mm} \text{and} \hspace*{6mm} \mathcal{F}^{t} = \sigma \left( \bigcup_{t \leq s \leq T}\mathcal{F}_{s}  \right),
\end{eqnarray*}

\noindent for $0 \leq t \leq T$. In particular, we write $\mathcal{F} = \mathcal{F}^{0}$. For $\mu \in \mathcal{M}(\mathcal{X})$  let $\mathbf{Q}_{\mu}$ be a solution of the martingale problem (\ref{eq10}). Then, for  $\phi \in \mathscr {D}(\mathbf{M})$ the process $M(\phi) = (M_{t}(\phi), t \in [0, T])$ defined in (\ref{eq10}) (with $\omega$ instead of $\nu$) is a $\mathbf{Q}_{\mu}$-martingale. We set
\begin{eqnarray*}
\omega_{t}^{l} := \omega_{t} - \int_{0}^{t} \int_{\mathcal{M}(\hat{\mathcal{X}})} \mu \cdot \mathds{1}_{\{ \langle \mu, \mathbf{1} \rangle \geq l \} } N({\rm d}s, {\rm d} \mu), \hspace*{5mm} t \in [0,T],
\end{eqnarray*}

\noindent for $ 1 < l < \infty$ and where $N$ is the optional random measure on $[0, \infty) \times \mathcal{M}(\hat{\mathcal{X}})$ associated with the purely discontinuous part of $M$ in Theorem \ref{teo3}. Recall that $\hat{N}$ and $\tilde{N}$ denote the compensator and compensated random measure of $N$, respectively. Recall also that $M^{\text{c}}(\phi)$ denotes the continuous martingale part of $M$. Then, Theorem \ref{teo3} implies that
\begin{align*}
\langle \omega_{t}^{l}, \phi \rangle & = \langle \omega_{0}, \phi \rangle + \int_{0}^{t} \int_{\hat{\mathcal{X}}}  p(x) A \phi(x) \omega_{s}({\rm d} x) {\rm d} s  + \int_{0}^{t} \int_{\hat{\mathcal{X}}}  \phi(x)
 \left( b(x)  - \int_{\hat{\mathcal{X}}} c(x,y) \omega_{s}({\rm d}y) \right) \omega_{s}({\rm d}x){\rm d} s  \\
& \hspace*{5mm} + M_{t}^{\text{c}}(\phi) +  \int_{0}^{t} \int_{\mathcal{M}(\hat{\mathcal{X}})} \langle \mu, \phi \rangle  \mathds{1}_{\{ \langle \mu, \mathbf{1} \rangle < l \} } \tilde{N}({\rm d}s, {\rm d} \mu) - \int_{0}^{t} \int_{\mathcal{M}(\hat{\mathcal{X}})} \langle \mu, \phi \rangle  \mathds{1}_{\{ \langle \mu, \mathbf{1} \rangle \geq l \} } \hat{N}({\rm d}s, {\rm d} \mu),
\end{align*}

\noindent for $t \in [0,T]$. Thus, by It\^{o}'s formula (see also the computation in (\ref{eq7})) the process $I^{n} = (I_{t}^{n}, t \in [0,T])$ for $n \geq 0$ an integer given by
\begin{align*}
I_{t}^{n} & = F_{n}^{\phi, \lambda, \theta}(\omega_{t}^{l}) - F_{n}^{\phi, \lambda, \theta}(\omega_{0}^{l})  -  \int_{0}^{t} \mathscr{L}^{\prime} F_{n}^{\phi, \lambda, \theta}(\omega_{s}^{l}) {\rm d}s
\end{align*}

\noindent is a local martingale under $\mathbf{Q}_{\mu}$, where $\lambda_{i} \in \mathbb{R}_{+}$, $\theta_{i} \in \mathbb{R}$ and $\phi_{i} \in \mathscr{D}(\mathbf{M})$, for $i = 1, \dots, n$.

Let $\tau^{1}(\omega) = \inf \{t \geq 0: \langle \omega_{t}, \mathbf{1} \rangle \geq l + \langle \omega_{0}, \mathbf{1} \rangle \} \wedge T$ and $\tau^{2}(\omega) = \inf \{t \geq 0: | \langle \omega_{t}, \mathbf{1} \rangle - \langle \omega_{t-}, \mathbf{1} \rangle| \geq l \}$. Set $\tau(\omega) = \tau^{1}(\omega) \wedge \tau^{2}(\omega)$. The following lemma gives another martingale characterization for $\omega^{l}$.

\begin{lemma} \label{lemma4}
For $\mu \in \mathcal{M}(\mathcal{X})$, let $\mathbf{P_{\mu}}$ be a probability measure on
$(\Omega, \mathcal{F})$ such that $\mathbf{P}_{\mu}(\omega_{0} = \mu) = 1$. Then
the process $I(\phi)= (I_{t}(\phi), t \in [0,T])$ given by
\begin{align*}
I_{t}(\phi) & = \exp \bigg(- \langle \omega_{t \wedge \tau(\omega)}^{l}, \phi  \rangle +
\int_{0}^{t \wedge \tau(\omega)} \int_{\hat{\mathcal{X}}}   p(x) A \phi(x) \omega_{s}({\rm d} x) {\rm d} s \nonumber \\
& \hspace*{4mm} + \int_{0}^{t \wedge \tau(\omega)}  \int_{\hat{\mathcal{X}}} \left( b(x)  - \int_{\hat{\mathcal{X}}} c(x,y)  \omega_{s}({\rm d}y) - \int_{l}^{\infty} u  \Pi(x, {\rm d} u )\right) \phi(x) \omega_{s}({\rm d}x) {\rm d} s  \nonumber \\
& \hspace*{4mm} - \int_{0}^{t \wedge \tau(\omega)}  \int_{\hat{\mathcal{X}}} \int_{0}^{l} \left( e^{-u \phi(x)} - 1 + u \phi(x)  \right) \phi(x) \Pi(x, {\rm d} u ) \omega_{s}({\rm d}x) {\rm d} s  \nonumber \\
& \hspace*{4mm} - \int_{0}^{t \wedge \tau(\omega)}  \int_{\hat{\mathcal{X}}}  \sigma(x) \phi^{2}(x) \omega_{s}({\rm d}x) {\rm d} s \bigg) \nonumber
\end{align*}

\noindent is a $\mathbf{P}_{\mu}$-martingale for every $\phi \in \mathscr{D}(\mathbf{M})$ if and only if $(I^{n}_{t \wedge \tau}, t \in [0, T])$ is a $\mathbf{P}_{\mu}$-martingale for each $n \geq 1$.
\end{lemma}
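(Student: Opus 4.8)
The plan is to reduce the whole equivalence to the single‑exponential case and then to recognise $I(\phi)$ as a Dol\'ean--Dade--type transform of $I^{1}$. Since the variational derivatives are additive, the operator $\mathscr{L}^{\prime}$ is linear in $F$; and since $e^{-\lambda_{i}\langle\cdot,\phi_{i}\rangle}=e^{-\langle\cdot,\lambda_{i}\phi_{i}\rangle}$ with $\lambda_{i}\phi_{i}\in\mathscr{D}(\mathbf{M})$ (the set $\mathscr{D}(\mathbf{M})$ being stable under multiplication by positive scalars), we have $I^{n}_{t\wedge\tau}=\sum_{i=1}^{n}\theta_{i}\,I^{1}[e^{-\langle\cdot,\lambda_{i}\phi_{i}\rangle}]_{t\wedge\tau}$. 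Hence ``$I^{n}_{\cdot\wedge\tau}$ is a $\mathbf{P}_{\mu}$-martingale for every $n$ and every choice of $\theta,\lambda,\phi$'' is equivalent to ``$I^{1}[e^{-\langle\cdot,\phi\rangle}]_{\cdot\wedge\tau}$ is a $\mathbf{P}_{\mu}$-martingale for every $\phi\in\mathscr{D}(\mathbf{M})$'', so it suffices to prove, for each fixed $\phi\in\mathscr{D}(\mathbf{M})$, that the latter is equivalent to $I(\phi)$ being a $\mathbf{P}_{\mu}$-martingale.

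Fix $\phi\in\mathscr{D}(\mathbf{M})$ and put $F(\mu):=e^{-\langle\mu,\phi\rangle}$. A direct computation of the variational derivatives ($\delta F(\mu)/\delta\mu(x)=-\phi(x)F(\mu)$, $\delta^{2}F(\mu)/\delta\mu(x)^{2}=\phi(x)^{2}F(\mu)$, and $F(\mu+u\delta_{x})-F(\mu)-u\,\delta F(\mu)/\delta\mu(x)=F(\mu)(e^{-u\phi(x)}-1+u\phi(x))$) gives $\mathscr{L}^{\prime}F(\mu)=h_{\phi}(\mu)F(\mu)$ with $h_{\phi}(\mu):=-\int_{\hat{\mathcal{X}}}p(x)(A_{1}\phi(x)+A_{2}\phi(x))\mu(\mathrm{d}x)+\int_{\hat{\mathcal{X}}}\big(b(x)+\int_{[l,\infty)}u\,\Pi(x,\mathrm{d}u)+\int_{\hat{\mathcal{X}}}c(x,y)\mu(\mathrm{d}y)-p(x)r(x)\big)\phi(x)\mu(\mathrm{d}x)+\int_{\hat{\mathcal{X}}}\int_{(0,l)}(e^{-u\phi(x)}-1+u\phi(x))\Pi(x,\mathrm{d}u)\mu(\mathrm{d}x)+\int_{\hat{\mathcal{X}}}\sigma(x)\phi^{2}(x)\mu(\mathrm{d}x)$. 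Comparing this with the definition of $I(\phi)$ in the statement shows $I_{t}(\phi)=F(\omega^{l}_{t\wedge\tau})\,R_{t}$ with $R_{t}:=\exp\big(-\int_{0}^{t\wedge\tau(\omega)}h_{\phi}(\omega_{s})\,\mathrm{d}s\big)$. The process $R$ is continuous, adapted and of finite variation with $R_{0}=1$; since before $\tau\le\tau^{1}$ the total mass stays below $l+\langle\mu,\mathbf{1}\rangle$ and each jump up to $\tau^{2}$ adds at most $l$, we have $\sup_{s\le\tau}\langle\omega_{s},\mathbf{1}\rangle\le 2l+\langle\mu,\mathbf{1}\rangle$, and together with Assumption \ref{assum3} (boundedness of $b,r,\sigma,c,A_{1}\phi,A_{2}\phi$ and $\sup_{x}\int(u\wedge u^{2})\Pi(x,\mathrm{d}u)<\infty$) this makes $h_{\phi}$ bounded on $[0,\tau]$, so $R$ and $R^{-1}$ are bounded on $[0,T]$. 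Finally $\omega^{l}_{s}=\omega_{s}$ for all $s<\tau^{2}\ (\ge\tau)$, because the corrections defining $\omega^{l}$ come only from jumps of total mass $\ge l$; hence $\mathscr{L}^{\prime}F(\omega^{l}_{s})=h_{\phi}(\omega_{s})F(\omega^{l}_{s})$ for a.e.\ $s\in[0,\tau]$.

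Now suppose $N:=I^{1}[F]_{\cdot\wedge\tau}$ is a $\mathbf{P}_{\mu}$-martingale. Then $X_{t}:=F(\omega^{l}_{t\wedge\tau})=1+N_{t}+\int_{0}^{t\wedge\tau}\mathscr{L}^{\prime}F(\omega^{l}_{s})\,\mathrm{d}s$ is a semimartingale, and integration by parts for $X\cdot R$ (the covariation term vanishes because $R$ is continuous of finite variation; using $\mathrm{d}R_{s}=-h_{\phi}(\omega_{s})R_{s}\,\mathrm{d}s$ on $[0,\tau]$ and $X_{s}h_{\phi}(\omega_{s})=\mathscr{L}^{\prime}F(\omega^{l}_{s})$ for a.e.\ $s\le\tau$ the two finite-variation contributions cancel) yields $I_{t}(\phi)=X_{t}R_{t}=1+\int_{0}^{t}R_{s}\,\mathrm{d}N_{s}$, a local martingale; being bounded (by $\|R\|_{\infty}$, since $F\le 1$) it is a true martingale. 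Conversely, if $P:=I(\phi)$ is a $\mathbf{P}_{\mu}$-martingale, then $X_{t}=P_{t}R_{t}^{-1}$ is a semimartingale and the same integration by parts, now with $\mathrm{d}(R_{t}^{-1})=h_{\phi}(\omega_{t})R_{t}^{-1}\,\mathrm{d}t$ on $[0,\tau]$, gives $I^{1}[F]_{t\wedge\tau}=X_{t}-1-\int_{0}^{t\wedge\tau}\mathscr{L}^{\prime}F(\omega^{l}_{s})\,\mathrm{d}s=\int_{0}^{t}R_{s}^{-1}\,\mathrm{d}P_{s}$, a bounded local martingale, hence a true martingale. This proves the equivalence for each $\phi$, and combined with the reduction of the first paragraph it yields the lemma.

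The calculations are routine; the only genuine content is the computation of $\mathscr{L}^{\prime}$ on $F=e^{-\langle\cdot,\phi\rangle}$ and the observation that $I(\phi)$ is exactly $F(\omega^{l}_{\cdot\wedge\tau})$ multiplied by the corrector $R$, after which everything is an integration by parts. The point requiring care is that there is no a priori semimartingale structure for $F(\omega^{l}_{\cdot\wedge\tau})$ under a generic $\mathbf{P}_{\mu}$: one must first extract it from whichever of the two martingale hypotheses is being assumed, and only then apply the product rule; the boundedness of the total mass on $[0,\tau]$ (hence of $h_{\phi}$, $R$ and $R^{-1}$) is what upgrades the resulting local martingales to genuine martingales.
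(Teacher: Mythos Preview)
Your proof is correct and follows essentially the same approach as the paper, which simply invokes integration by parts together with the almost sure boundedness of $\langle\omega_{t},\mathbf{1}\rangle$ on $[0,\tau]$ (deferring the details to \cite{Elk1991}). Your explicit reduction to $n=1$ via linearity and the product-rule computation with the corrector $R$ are precisely the details that the reference supplies; the only slip is that the constant in $I_{t}(\phi)=X_{0}R_{0}+\int_{0}^{t}R_{s}\,\mathrm{d}N_{s}$ is $e^{-\langle\mu,\phi\rangle}$ rather than $1$, which does not affect the martingale conclusion.
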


\begin{proof}
The result follows from the formula for integration by parts and the same argument as in the proof of Theorem 7 in \cite{Elk1991}. Here, it is used that up to time $\tau(\omega)$ we have $\langle \omega_t, \mathbf{1} \rangle$ bounded almost surely.
\end{proof}

The next two theorems correspond to \cite[Theorem 2.4 and Theorem 2.5]{He2009}. 
The first result shows that the solution of the martingale problem (\ref{eq10}) is determined by the martingale problem (\ref{eq20}) before it has a jump of size larger than $1 < l < + \infty$. Define
\begin{eqnarray*}
\mathcal{F}_{\tau(\omega)} = \{E \in \mathcal{F}: \, E \cap \{\tau \leq t\} \in \mathcal{F}_{t} \, \, \text{for all} \, \, t \in [0,T] \}.
\end{eqnarray*}

\noindent It is not hard to see that $\mathcal{F}_{\tau(\omega)} = \sigma(\omega(t \wedge \tau): t \in [0,T])$. We also define
\begin{eqnarray*}
\mathcal{F}_{\tau(\omega)-} = \sigma(\{E \in \mathcal{F}: \, E \cap \{\tau > t\} \in \mathcal{F}_{t} \, \, \text{for all} \, \, t \in [0,T] \}).
\end{eqnarray*}

\begin{theorem} \label{teo7}
For $\mu \in \mathcal{M}(\mathcal{X})$, let $\mathbf{P}_{\mu}$ be a probability measure on  $(\Omega, \mathcal{F})$ such that $\mathbf{P}_{\mu}(\omega_{0} = \mu) = 1$ and $(I_{t \wedge \tau(\omega)}^{n}, t \in [0,T])$ is a $\mathbf{P_{\mu}}$-martingale for each $n \geq 1$. We define the measure $\mathbf{S}_{\omega} = \delta_{\omega} \otimes \mathbf{Q}_{\tau(\omega), \omega_{\tau(\omega)}^{l}}^{\prime}$ on $(\Omega, \mathcal{F})$ that satisfies
\begin{eqnarray*}
\mathbf{S}_{\omega}(E_{1} \cap E_{2}) = \mathds{1}_{ E_{1}}(\omega) \mathbf{Q}_{\tau(\omega), \omega_{\tau(\omega)}^{l}}^{\prime}(E_{2}), \hspace*{5mm} \text{for} \hspace*{3mm} E_{1} \in \mathcal{F}_{\tau(\omega)-} \hspace*{3mm} \text{and} \hspace*{3mm} E_{2} \in \mathcal{F}_{\tau(\omega)}.
\end{eqnarray*}

\noindent We set
\begin{eqnarray*}
\mathbf{P}_{\mu}^{\prime}(E) = \mathbf{E}_{\mathbf{P}_{\mu}}(\mathbf{S}_{\omega}(E)), \hspace*{5mm} E \in \mathcal{F},
\end{eqnarray*}

\noindent and define $\mathcal{F}_{\tau(\omega)-}^{l} = \sigma(\omega_{t \wedge \tau(\omega)}^{l}: t \in [0, T])$. Then $\mathbf{P}_{\mu}^{\prime}$ is a solution of the killed martingale problem and $\mathbf{P}_{\mu} = \mathbf{Q}_{\mu}^{\prime}$ on $\mathcal{F}_{\tau(\omega)-}^{l}$. In particular, we can take $\mathbf{P}_{\mu} = \mathbf{Q}_{\mu}$.
\end{theorem}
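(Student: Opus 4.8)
The plan is to read Theorem \ref{teo7} as a concatenation (``gluing'') statement in the spirit of Stroock \cite{Stroock1975} and He \cite{He2009}: the measure $\mathbf{P}_\mu^\prime$ is obtained by running $\mathbf{P}_\mu$ on the killed path $\omega^l$ up to the stopping time $\tau(\omega)$ and then, conditionally on $\mathcal{F}_{\tau(\omega)}$, continuing with the (unique) solution of the killed martingale problem started from $(\tau(\omega),\omega^l_{\tau(\omega)})$. First I would check that $\mathbf{S}_\omega$, and hence $\mathbf{P}_\mu^\prime$, are well defined. Extending Proposition \ref{prop3} to an arbitrary deterministic initial time $s\in[0,T]$ and initial measure in $\mathcal{M}(\hat{\mathcal{X}})$ — the Girsanov argument of Theorem \ref{teo5} is insensitive to both changes since $\hat{\mathcal{X}}$ is compact and $\sigma$ is bounded away from zero — the killed martingale problem has a unique solution $\mathbf{Q}^\prime_{s,\eta}$, and this uniqueness forces $(s,\eta)\mapsto\mathbf{Q}^\prime_{s,\eta}$ to be measurable and strongly Markovian. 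Together with the measurability of $\tau(\omega)$ and of $\omega\mapsto\omega^l_{\tau(\omega)}$ this makes $E\mapsto\mathbf{S}_\omega(E)$ measurable and $\mathbf{P}_\mu^\prime(E)=\mathbf{E}_{\mathbf{P}_\mu}(\mathbf{S}_\omega(E))$ a probability measure on $(\Omega,\mathcal{F})$ with $\mathbf{P}_\mu^\prime(\omega_0=\mu)=1$. I would also record at the outset that on $[0,\tau(\omega)]$ the total mass is bounded by $l+\langle\omega_0,\mathbf{1}\rangle$, so that the ``stopped'' local martingales entering the argument are genuine martingales.

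The core step is to show that $\mathbf{P}_\mu^\prime$ solves the killed martingale problem on all of $[0,T]$. By Corollary \ref{cor1} it suffices to verify that, for every $\phi\in\mathscr{D}(\mathbf{M})$, the (un-stopped) exponential functional martingale attached to $\mathscr{L}^\prime$, equivalently $M^\prime(\phi)$ from (\ref{eq20}), is a $\mathbf{P}_\mu^\prime$-martingale, and I would do this by splitting at $\tau(\omega)$. On $\{t\le\tau(\omega)\}$ the process $\omega^l_{\cdot\wedge\tau(\omega)}$ has, by the very definition of $\mathbf{S}_\omega$, the same law under $\mathbf{P}_\mu^\prime$ as under $\mathbf{P}_\mu$; since $(I^n_{t\wedge\tau})$ is a $\mathbf{P}_\mu$-martingale by hypothesis, Lemma \ref{lemma4} gives that $(I_t(\phi))$ is a $\mathbf{P}_\mu$-martingale, so the required increments are controlled up to $\tau(\omega)$. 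On $\{t\ge\tau(\omega)\}$ I would condition on $\mathcal{F}_{\tau(\omega)}$: by construction the post-$\tau$ evolution is governed by $\mathbf{Q}^\prime_{\tau(\omega),\omega^l_{\tau(\omega)}}$, which solves the killed martingale problem from that time on, so the increments of $M^\prime(\phi)$ after $\tau(\omega)$ are martingale increments for the shifted filtration. Gluing the two pieces with the optional stopping theorem and the tower property — using here the strong Markov property of the killed martingale problem, itself a consequence of its uniqueness (Proposition \ref{prop3}) — yields that $M^\prime(\phi)$ is a $\mathbf{P}_\mu^\prime$-martingale for every $\phi\in\mathscr{D}(\mathbf{M})$; the decomposition into its continuous and purely discontinuous parts with the prescribed characteristics is then inherited from $\mathbf{P}_\mu$ before $\tau(\omega)$ and from $\mathbf{Q}^\prime$ afterwards, exactly as in the proof of Theorem \ref{teo3}(b). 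I expect this gluing to be the main obstacle: one must match the $\sigma$-algebras $\sigma(\bigcup_{s<\tau(\omega)}\mathcal{F}_s)$, $\mathcal{F}^{\tau(\omega)}$ and $\mathcal{F}^l_{\tau(\omega)-}$ correctly, justify the measurable selection $\omega\mapsto\mathbf{Q}^\prime_{\tau(\omega),\omega^l_{\tau(\omega)}}$ through uniqueness, and track carefully that the reset replacing $\omega_{\tau(\omega)}$ by $\omega^l_{\tau(\omega)}$ at the splitting time removes exactly the big jump, so that the concatenated path carries no jump of size $\ge l$ and is legitimately a path for the killed problem.

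Once $\mathbf{P}_\mu^\prime$ is known to solve the killed martingale problem, Proposition \ref{prop3} gives $\mathbf{P}_\mu^\prime=\mathbf{Q}_\mu^\prime$ at once. It then remains to compare $\mathbf{P}_\mu$ and $\mathbf{Q}_\mu^\prime$ on $\mathcal{F}^l_{\tau(\omega)-}=\sigma(\omega^l_{t\wedge\tau(\omega)}:t\in[0,T])$. By the definition of $\mathbf{S}_\omega$ the stopped killed path $(\omega^l_{t\wedge\tau(\omega)})_{t\in[0,T]}$ is, $\mathbf{S}_\omega$-almost surely, equal to the fixed path determined by $\omega$: for $t<\tau(\omega)$ it is $\mathcal{F}_t$-measurable and hence governed by the $\delta_\omega$-factor, while for $t\ge\tau(\omega)$ it is frozen at $\omega^l_{\tau(\omega)}$, which is precisely the deterministic starting point of the $\mathbf{Q}^\prime$-factor. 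Consequently the law of $(\omega^l_{t\wedge\tau(\omega)})_t$ under $\mathbf{P}_\mu^\prime=\int\mathbf{S}_\omega\,\mathbf{P}_\mu(\mathrm{d}\omega)$ coincides with its law under $\mathbf{P}_\mu$, so $\mathbf{P}_\mu=\mathbf{P}_\mu^\prime=\mathbf{Q}_\mu^\prime$ on $\mathcal{F}^l_{\tau(\omega)-}$.

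Finally, the choice $\mathbf{P}_\mu=\mathbf{Q}_\mu$ is admissible. On $[0,\tau(\omega)]$ the total mass is bounded, so the local martingale $(I^n_{t\wedge\tau})$ derived via It\^o's formula from Theorem \ref{teo3} just above Lemma \ref{lemma4} is in fact a true $\mathbf{Q}_\mu$-martingale for every $n\ge1$, which is exactly the standing hypothesis on $\mathbf{P}_\mu$ in the theorem. Thus the conclusion applies to any solution $\mathbf{Q}_\mu$ of the martingale problem (\ref{eq10}), and it agrees with the unique solution $\mathbf{Q}_\mu^\prime$ of the killed martingale problem on $\mathcal{F}^l_{\tau(\omega)-}$; this is the statement that will be bootstrapped, after also controlling the law of the first big jump, into full uniqueness for (\ref{eq10}).
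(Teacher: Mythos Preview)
Your proposal is correct and follows exactly the approach the paper intends: the paper's own proof consists of a single sentence deferring to \cite[Theorem 2.4]{He2009}, which is precisely the Stroock-type concatenation argument you have sketched (well-definedness via uniqueness, gluing at $\tau(\omega)$ using Lemma \ref{lemma4} before and $\mathbf{Q}^\prime$ after, then invoking Proposition \ref{prop3} and reading off agreement on $\mathcal{F}^l_{\tau(\omega)-}$). You have in fact supplied considerably more detail than the paper does.
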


\begin{proof}
The statement is obtained along exactly the same lines as the proof of \cite[Theorem 2.4]{He2009}. 
\end{proof}

We now see that uniqueness of the killed martingale problem implies uniqueness for the solution $\mathbf{Q}_{\mu}$ of the martingale problem (\ref{eq10})
on $\mathcal{F}_{\tau(\omega)-}^{l}$. Our next step is to show that uniqueness of the killed martingale problem implies uniqueness of $\mathbf{Q}_{\mu}$ on $\mathcal{F}_{\tau(\omega)}$. The next theorem shows that when a jump of size larger than $1 < l < + \infty$ happens, the jump size is uniquely determined by $\mathcal{F}_{\tau(\omega)-}^{l}$. We denote by $\mathbf{E}_{\mu}$ the expectation with respect to $\mathbf{Q}_{\mu}$.

\begin{theorem} \label{teo8}
For $1 < l < + \infty$ let $\mathcal{M}_{l}(\hat{\mathcal{X}}) = \{ \mu \in \mathcal{M}(\hat{\mathcal{X}}): \langle \mu, \mathbf{1} \rangle \geq l \}$. There is an $\mathcal{F}_{\tau(\omega)-}^{l}$-measurable function $\tau^{\prime}: \Omega \rightarrow  [0,T]$ such that for $E \in \mathbb{B}(\mathcal{M}_{l}(\hat{\mathcal{X}}))$
\begin{align*}
& \mathbf{E}_{\mu} \left[ N((0, \tau(\omega)], E) \Big | \mathcal{F}_{\tau(\omega)-}^{l} \right] = \\
& \hspace*{5mm} \int_{0}^{\tau^{\prime}} \int_{\hat{\mathcal{X}}} \int_{0}^{\infty} \exp \left(-\int_{0}^{t} \int_{\hat{\mathcal{X}}} \int_{[l, \infty)} \Pi(x, {\rm d} u) \omega_{s \wedge \tau(\omega)}^{l}({\rm d} x) {\rm d} s \right) \mathds{1}_{E}(v\delta_{y})\Pi(y, {\rm d} v) \omega_{s \wedge \tau(\omega)}^{l}({\rm d} y) {\rm d} t
\end{align*}

\noindent holds for any solution $\mathbf{Q}_{\mu}$ of the martingale problem (\ref{eq10}). In particular, given $\mathcal{F}_{\tau(\omega)-}^{l} $ the distribution of the random measure $N$ up to time $\tau(\omega)$ is
uniquely determined.
\end{theorem}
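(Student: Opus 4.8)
The plan is to mimic the proof of \cite[Theorem 2.5]{He2009}, exploiting the martingale characterization of $\omega^{l}$ obtained in Lemma \ref{lemma4} and Theorem \ref{teo7}. The starting observation is that the big-jump random measure $N$ restricted to $(0,\tau(\omega)]$ can only charge $\mathcal{M}_{l}(\hat{\mathcal{X}})$ once: by definition of $\tau^{2}(\omega)$, at the first instant a jump of total mass at least $l$ occurs the process is stopped, and by the definition of $\tau^{1}(\omega)$ the total mass before $\tau(\omega)$ is bounded by $l+\langle\mu,\mathbf{1}\rangle$. Hence $N((0,\tau(\omega)],E)\in\{0,1\}$ for $E\subseteq\mathcal{M}_{l}(\hat{\mathcal{X}})$, and what we must identify is simply the (sub)probability that the killing time $\tau(\omega)=\tau^{2}(\omega)$ arrives before $T$ (equivalently before $\tau^{1}$) together with the law of the mark $\Delta\nu_{\tau}$ at that instant.

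First I would introduce, for a fixed solution $\mathbf{Q}_{\mu}$ of (\ref{eq10}), the stopped process $\omega^{l}_{\cdot\wedge\tau(\omega)}$ and recall from the proof of Theorem \ref{teo3} (b) that the compensator of the jumps of $\nu$ of size in $[l,\infty)$ is $\mathrm{d}s\int_{\hat{\mathcal X}}\int_{[l,\infty)}\mathds 1_{E}(u\delta_{x})\,\Pi(x,\mathrm du)\,\nu_{s}(\mathrm dx)$. Since up to time $\tau(\omega)$ the killed process $\omega^{l}$ and the full process agree and $\langle\omega_{s},\mathbf{1}\rangle$ is bounded, one may write down explicitly the $\mathcal{F}^{l}_{\tau(\omega)-}$-conditional intensity for the first big jump. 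The standard device is to consider, for the stopping time $\sigma_{A}:=\inf\{t:\ \Delta\omega_{t}\text{ has mass}\ \geq l\}$, the process $t\mapsto\exp(-\int_{0}^{t}\int_{\hat{\mathcal X}}\int_{[l,\infty)}\Pi(x,\mathrm du)\,\omega^{l}_{s\wedge\tau(\omega)}(\mathrm dx)\,\mathrm ds)$, show using the martingale characterization of Lemma \ref{lemma4} that it is (up to a martingale) the $\mathcal F^{l}_{\tau(\omega)-}$-conditional survival function of $\tau^{2}(\omega)\wedge\tau^{1}(\omega)$, and then peel off the mark: conditionally on this survival function, the mark $u\delta_{y}$ at the jump time is distributed according to $\Pi(y,\mathrm dv)\,\omega^{l}_{s\wedge\tau(\omega)}(\mathrm dy)$ normalized. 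Integrating the product over the relevant time interval — up to the $\mathcal F^{l}_{\tau(\omega)-}$-measurable randomized horizon $\tau'$ that encodes $\tau^{1}\wedge T$ read off from the path of $\omega^{l}$ — yields exactly the displayed formula. The key point is that every ingredient ($\Pi$, the path $\omega^{l}_{\cdot\wedge\tau(\omega)}$, and $\tau(\omega)$ as a function of $\omega^{l}$) is $\mathcal F^{l}_{\tau(\omega)-}$-measurable, so the right-hand side is a bona fide version of the conditional expectation, independent of the particular solution $\mathbf{Q}_{\mu}$.

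The main obstacle, and the point where care is needed, is to justify that $\tau(\omega)=\tau^{1}(\omega)\wedge\tau^{2}(\omega)$, the stopped trajectory $\omega^{l}_{\cdot\wedge\tau(\omega)}$, and the conditioning $\sigma$-field $\mathcal F^{l}_{\tau(\omega)-}=\sigma(\omega^{l}_{t\wedge\tau(\omega)}:t\in[0,T])$ are compatible in the sense that $\tau(\omega)$ is itself $\mathcal F^{l}_{\tau(\omega)-}$-measurable — this is what makes $\tau'$ well defined — and that the compensator of $N$ restricted before $\tau(\omega)$ depends only on $\omega^{l}$ and not on the suppressed big jump. This is exactly the subtle measurability bookkeeping carried out in \cite[Theorem 2.5]{He2009}, and the translation to our setting is routine once the notational dictionary of Section \ref{Uni3} (the operators $\mathscr L$, $\mathscr L'$ and the family $\mathscr E$) is in place, together with the boundedness of $\langle\omega_{t},\mathbf 1\rangle$ up to $\tau(\omega)$ and Lemma \ref{lemma2}'s moment bounds for the killed problem. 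I would therefore carry out the proof as follows: (i) reduce to identifying the first-big-jump intensity via Lemma \ref{lemma4}; (ii) produce the exponential survival-function martingale and read off $\tau'$; (iii) disintegrate to obtain the mark law; (iv) verify the $\mathcal F^{l}_{\tau(\omega)-}$-measurability of all terms, concluding that the right-hand side is the conditional expectation for every solution $\mathbf{Q}_{\mu}$, hence the law of $N$ up to $\tau(\omega)$ given $\mathcal F^{l}_{\tau(\omega)-}$ is uniquely determined. The detailed computations being entirely parallel to \cite{He2009}, I would present the structure and refer there for the cumbersome steps.
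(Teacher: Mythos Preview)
Your proposal is correct and follows essentially the same route as the paper: both rely on Lemma \ref{lemma4} and Theorem \ref{teo7} to reduce the question to the framework of \cite[Theorem 2.5]{He2009}, the only difference being that the paper cites \cite[Theorem 3.2]{Stroock1975} as the abstract result whose hypotheses are being verified, whereas you unpack the mechanism (survival exponential, mark disintegration, measurability bookkeeping) that underlies that citation. Your more explicit description of the first-jump intensity argument is exactly what \cite{He2009} carries out in the measure-valued setting, so the two presentations differ only in level of detail, not in substance.
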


\begin{proof}
The formula for the conditional expectation follows from \cite[Theorem 2.5]{He2009} by using Theorem \ref{teo7} and Lemma \ref{lemma4} to show that the requirements of \cite[Theorem 3.2]{Stroock1975} are satisfied. Since the distribution of the random measure $N$ up to time $\tau(\omega)$ is characterized by its intensity the result follows.
\end{proof}
Since
\begin{eqnarray*}
\omega_{\tau(\omega)} = \omega_{\tau(\omega)}^{l} + \int_{(0, \tau(\omega)]} \int_{\mathcal{M}(\hat{\mathcal{X}})} \mu \cdot \mathds{1}_{\{ \langle \mu, \mathbf{1} \rangle \geq l \} } N({\rm d}s, {\rm d} \mu),
\end{eqnarray*}

\noindent we see that the distribution of $\omega_{\tau(\omega)}$ under $\mathbf{Q}_{\mu}$ given $\mathcal{F}_{\tau(\omega)-}^{l}$ is uniquely determined, and therefore Theorem \ref{teo7} implies that the measure $\mathbf{Q}_{\mu}$ is uniquely determined on $\mathcal{F}_{\tau(\omega)}$.

\begin{lemma} \label{lemma3}
Let $\mathbf{Q}_{\mu}$ be a solution of the martingale problem (\ref{eq10}). Let $\beta(\omega)$ be a finite stopping time and let $\mathcal{Q}_{\omega}$ be a regular conditional probability distribution of $\mathbf{Q}_{\mu} | \mathcal{F}_{\beta(\omega)}$. Then, there is a set $E \in \mathcal{F}_{\beta(\omega)}$ such that $\mathbf{Q}_{\mu}(E) = 0$ and when $\omega \not \in E$,
\begin{eqnarray*}
F_{n}^{\phi, \lambda, \theta}( \omega_{t \vee \beta(\omega)} ) - F_{n}^{\phi, \lambda, \theta}( \omega_{ \beta(\omega)} ) - \int_{\beta(\omega)}^{t \vee \beta(\omega)} \mathscr{L} F_{n}^{\phi, \lambda, \theta}( \omega_{s} ) {\rm d}s, \hspace*{5mm} t \in [0,T]
\end{eqnarray*}

\noindent is a $\mathcal{Q}_{\omega}$-martingale for $F_{n}^{\phi, \lambda, \theta} \in \mathscr{D}(\mathscr{L})$.
\end{lemma}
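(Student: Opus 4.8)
The plan is to invoke the standard conditioning-of-martingale-problems machinery (the "restarting" property for solutions of martingale problems) in the form developed by Stroock--Varadhan, adapted to the measure-valued setting exactly as in He \cite{He2009}. First I would recall, via Corollary \ref{cor1}, that since $\mathbf{Q}_{\mu}$ solves the martingale problem (\ref{eq10}) it also solves the $(\mathscr{L}, \mathscr{D}(\mathscr{L}), \mu)$-martingale problem; in particular, for each fixed $F_{n}^{\phi,\lambda,\theta} \in \mathscr{D}(\mathscr{L})$ the process
\begin{eqnarray*}
N_t^{F} := F_{n}^{\phi,\lambda,\theta}(\omega_t) - F_{n}^{\phi,\lambda,\theta}(\omega_0) - \int_0^t \mathscr{L} F_{n}^{\phi,\lambda,\theta}(\omega_s)\, {\rm d}s, \qquad t \in [0,T],
\end{eqnarray*}
is a $\mathbf{Q}_{\mu}$-martingale with respect to $(\mathcal{F}_t)_{t \in [0,T]}$. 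Here one must first check that $\mathscr{L} F_{n}^{\phi,\lambda,\theta}(\omega_s)$ is well-defined and that $\mathbb{E}_{\mathbf{Q}_{\mu}}[\int_0^T |\mathscr{L} F_{n}^{\phi,\lambda,\theta}(\omega_s)|\,{\rm d}s] < \infty$; this follows from the explicit form of $\mathscr{L}$, the boundedness assumptions in Assumption \ref{assum3} (in particular $\sup_x \int_0^\infty (u \wedge u^2)\Pi(x,{\rm d}u) < \infty$ together with the bound $|F_n^{\phi,\lambda,\theta}(\mu+u\delta_x) - F_n^{\phi,\lambda,\theta}(\mu) - u\,\delta F/\delta\mu(x)| \le C(u\wedge u^2)$ obtained by Taylor-expanding the exponentials), and the first-moment bound $\sup_{t\in[0,T]}\mathbf{E}_{\mathbf{Q}_\mu}[\langle\omega_t,\mathbf{1}\rangle] < \infty$ from Theorem \ref{teo3}(b).

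Next I would apply the general disintegration result: if $N^F$ is an $(\mathcal{F}_t)$-martingale under $\mathbf{Q}_\mu$ and $\mathcal{Q}_\omega$ is a regular conditional probability distribution of $\mathbf{Q}_\mu$ given $\mathcal{F}_{\beta(\omega)}$, then for $\mathbf{Q}_\mu$-a.e.\ $\omega$ the shifted process $N^F_{t\vee\beta(\omega)} - N^F_{\beta(\omega)}$, $t \in [0,T]$, is a $\mathcal{Q}_\omega$-martingale with respect to the shifted filtration; see \cite[Theorem 1.2.10]{Stroock1975-type reference} or the argument reproduced in \cite{He2009}. Writing this out, for $\omega$ outside a $\mathbf{Q}_\mu$-null set $E_F \in \mathcal{F}_{\beta(\omega)}$ (measurability of $E_F$ follows from measurability of $\omega \mapsto \mathcal{Q}_\omega$), the process
\begin{eqnarray*}
F_{n}^{\phi,\lambda,\theta}(\omega_{t\vee\beta(\omega)}) - F_{n}^{\phi,\lambda,\theta}(\omega_{\beta(\omega)}) - \int_{\beta(\omega)}^{t\vee\beta(\omega)} \mathscr{L} F_{n}^{\phi,\lambda,\theta}(\omega_s)\,{\rm d}s, \qquad t\in[0,T],
\end{eqnarray*}
is a $\mathcal{Q}_\omega$-martingale. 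To obtain a single exceptional set $E$ that works simultaneously for all $F_n^{\phi,\lambda,\theta} \in \mathscr{D}(\mathscr{L})$, I would use separability: by Remark \ref{Rem2}, $\mathscr{E}$ (hence $\mathscr{D}(\mathscr{L})$) is dense in $C_0(\mathcal{M}(\hat{\mathcal{X}}),\mathbb{R})$ in the sup-norm, and one can extract a countable subfamily $\mathscr{D}_0 \subset \mathscr{D}(\mathscr{L})$ whose linear span (with rational coefficients) is sup-norm dense; taking $E = \bigcup_{F \in \mathscr{D}_0} E_F$ gives a single $\mathcal{F}_{\beta(\omega)}$-measurable $\mathbf{Q}_\mu$-null set outside of which the martingale property holds for every $F \in \mathscr{D}_0$, and then for every $F \in \mathscr{D}(\mathscr{L})$ by a uniform-approximation argument that also controls $\mathscr{L} F$ (here one uses that $\mathscr{L}$ acts continuously on the relevant parameters $\lambda_i, \phi_i$, together with dominated convergence justified by the moment bound of Theorem \ref{teo3}(b)).

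The main obstacle I expect is not the abstract conditioning step, which is routine once the integrability is in place, but rather the uniformity in $F_n^{\phi,\lambda,\theta}$: one must be careful that the null set can be chosen independently of the (uncountably many) choices of $n$, $\lambda_i$, $\theta_i$, $\phi_i$. The clean way around this is the density of a countable subfamily in $C_0(\mathcal{M}(\hat{\mathcal{X}}),\mathbb{R})$ combined with a closability-type argument for the pair $(F, \mathscr{L}F)$ along approximating sequences; the delicate point there is to dominate $\mathscr{L} F_n$ and its approximants uniformly, which again reduces to the $(u\wedge u^2)$-integrability of $\Pi$ and the first-moment control on $\langle\omega_t,\mathbf{1}\rangle$. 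Apart from that, everything is a verbatim adaptation of \cite[Lemma 2.6 and its proof]{He2009}, so I would state the result and point to that reference for the routine details, spelling out only the integrability verifications that are specific to our more general branching mechanism $\psi$ and competition term $c$.
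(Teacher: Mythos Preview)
Your proposal is correct and takes essentially the same approach as the paper: both simply invoke the Stroock--Varadhan conditioning result, the paper citing \cite[Theorem 1.2.10]{Stroock1979} in a single line and you spelling out the verification more carefully. One simplification you may note: because each $\phi_i \in \mathscr{D}(\mathbf{M})$ is bounded away from zero, the computation done in the tightness proof (around the display for $|\mathscr{L}^K f(\nu^K_t)|$) shows that $\mathscr{L}F_n^{\phi,\lambda,\theta}$ is in fact uniformly bounded on $\mathcal{M}(\hat{\mathcal{X}})$, so the integrability check is immediate and no moment bound is needed at this step.
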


\begin{proof}
The result is proved in the same way as \cite[Theorem 1.2.10]{Stroock1979}.
\end{proof}

We now state the main theorem in this section.

\begin{theorem}
Suppose that for $1 < l < + \infty$ there is a unique solution of the martingale problem (\ref{eq20}), or equivalently the killed martingale problem. Then there is a unique solution of the martingale problem (\ref{eq10}).
\end{theorem}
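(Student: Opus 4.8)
The plan is to run the classical Stroock localization induction: having already reduced (via Theorem~\ref{teo7}, Theorem~\ref{teo8} and the identity for $\omega_{\tau(\omega)}$ displayed just above) the problem to controlling the process after its first ``big jump'', we iterate the stopping times at successive big jumps and patch the uniquely-determined pieces together using the strong Markov property encoded in Lemma~\ref{lemma3}. Concretely, set $\tau_{0} = 0$ and, for $k \geq 1$, let $\tau_{k}$ be defined by applying the map $\omega \mapsto \tau(\omega)$ to the shifted path after $\tau_{k-1}$, i.e. $\tau_{k}$ is the first time after $\tau_{k-1}$ at which either the total mass increases by $l$ over its value at $\tau_{k-1}$ or a jump of size $\geq l$ occurs, capped at $T$. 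The first step is to record that $\tau_{k} \uparrow T$ almost surely as $k \to \infty$: between consecutive $\tau_{k}$'s the mass $\langle \omega_{t}, \mathbf{1}\rangle$ either grows by at least $l$ or makes a jump of size $\geq l$, and since the first-moment bound of Theorem~\ref{teo3}~(b)(1) (or Lemma~\ref{lemma1} transferred to the limit) forces $\sup_{t\le T}\langle\omega_{t},\mathbf1\rangle<\infty$ a.s.\ under any solution, only finitely many such increments can occur on $[0,T]$; hence $\tau_{k}=T$ for $k$ large enough, $\mathbf{Q}_{\mu}$-a.s.

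The second step is the induction itself. The induction hypothesis is: for every $k$ and every solution $\mathbf{Q}_{\mu}$, the restriction $\mathbf{Q}_{\mu}\big|_{\mathcal F_{\tau_{k}(\omega)}}$ is uniquely determined. The base case $k=1$ is exactly the combination of Theorem~\ref{teo7} (which determines $\mathbf{Q}_{\mu}$ on $\mathcal F_{\tau(\omega)-}^{l}$ using uniqueness of the killed martingale problem, Proposition~\ref{prop3}) with Theorem~\ref{teo8} and the displayed formula for $\omega_{\tau(\omega)}$ (which determine the law of the jump at $\tau(\omega)$, hence $\mathbf{Q}_{\mu}$ on all of $\mathcal F_{\tau(\omega)}=\mathcal F_{\tau_{1}(\omega)}$). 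For the inductive step, let $\mathcal{Q}_{\omega}$ be a regular conditional probability distribution of $\mathbf{Q}_{\mu}\mid\mathcal F_{\tau_{k}(\omega)}$. By Lemma~\ref{lemma3}, for $\mathbf{Q}_{\mu}$-a.e.\ $\omega$ the measure $\mathcal{Q}_{\omega}$ solves the $(\mathscr L,\mathscr D(\mathscr L),\cdot)$-martingale problem started at time $\tau_{k}(\omega)$ from the value $\omega_{\tau_{k}(\omega)}$; by Corollary~\ref{cor1} this is the martingale problem~$(\mathbf M)$ started afresh. Applying the base-case argument to $\mathcal{Q}_{\omega}$ (with $\tau$ replaced by the first big-jump time after $\tau_{k}$, i.e.\ $\tau_{k+1}$) shows that $\mathcal{Q}_{\omega}$ is uniquely determined on $\mathcal F_{\tau_{k+1}(\omega)}$. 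Combining this with the already-determined law of $\mathbf{Q}_{\mu}$ on $\mathcal F_{\tau_{k}(\omega)}$ gives that $\mathbf{Q}_{\mu}$ is uniquely determined on $\mathcal F_{\tau_{k+1}(\omega)}$, closing the induction.

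The final step is to pass from ``uniquely determined on each $\mathcal F_{\tau_{k}(\omega)}$'' to ``uniquely determined on $\mathcal F=\mathcal F_{T}$''. Since $\tau_{k}(\omega)=T$ for all sufficiently large $k$ (depending on $\omega$) almost surely, the increasing family $\{\mathcal F_{\tau_{k}(\omega)}\}_{k}$ generates $\mathcal F_{T}$; more precisely, for any bounded $\mathcal F_{T}$-measurable $G$ we have $\mathbf{E}_{\mu}[G\mid\mathcal F_{\tau_{k}(\omega)}]\to G$ a.s.\ and in $L^{1}$ by the martingale convergence theorem together with $\tau_{k}\uparrow T$, and the left-hand side has a value that does not depend on the particular solution by the induction. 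Hence $\mathbf{E}_{\mu}[G]$ is the same for every solution, i.e.\ the law $\mathbf{Q}_{\mu}$ on $\mathbb D([0,T],\mathcal M(\hat{\mathcal X}))$ is unique. Since Theorem~\ref{teo3}~(b) already furnishes existence, this proves the theorem; and combined with Theorem~\ref{teo4}'s hypotheses this is exactly the uniqueness assertion of Theorem~\ref{teo4}. The main obstacle in this argument is not any single estimate but making the shift/pasting bookkeeping rigorous — verifying that the regular conditional distributions $\mathcal{Q}_{\omega}$ really do solve the shifted martingale problem with the correct (random) initial data and that ``$\tau$ after $\tau_{k}$'' is a legitimate stopping time to which Theorems~\ref{teo7}--\ref{teo8} apply verbatim — which is precisely the part we defer to the analogous, more detailed treatment in \cite{He2009,Stroock1975}.
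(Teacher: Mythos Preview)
Your proposal follows essentially the same localization--induction strategy as the paper: define an increasing sequence of stopping times at successive ``big'' events, use Theorems~\ref{teo7} and~\ref{teo8} for the base case, Lemma~\ref{lemma3} for the inductive step, and conclude by showing the stopping times exhaust $[0,T]$. The paper's version differs only cosmetically --- it caps each increment at $\beta_{n}+1$ rather than at $T$ --- and defers the exhaustion argument to \cite[Theorem~2.6]{He2009} rather than sketching it.

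One point of imprecision: your justification that $\sup_{t\le T}\langle\omega_{t},\mathbf{1}\rangle<\infty$ a.s.\ via the pointwise first-moment bound of Theorem~\ref{teo3}(b)(1) is not quite right --- that bound is $\sup_{t}\mathbf{E}[\langle\nu_{t},\mathbf{1}\rangle]<\infty$, which does not directly control the pathwise supremum. The correct (and simpler) reason is that any c\`adl\`ag path on a compact interval is bounded. More substantively, the claim ``only finitely many such increments can occur'' does not follow immediately from a finite supremum, because the mass can decrease between $\tau_{k}$'s: if $\tau_{k}$ is triggered by a big jump, $\langle\omega_{\tau_{k}},\mathbf{1}\rangle$ need not exceed $\langle\omega_{\tau_{k-1}},\mathbf{1}\rangle$. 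The fix is the observation that the number of jumps of size $\ge l$ on $[0,T]$ is a.s.\ finite (its compensator is bounded by $\sup_{x}\Pi(x,[l,\infty))\int_{0}^{T}\langle\omega_{s},\mathbf{1}\rangle\,{\rm d}s<\infty$), so eventually every $\tau_{k}$ is a mass-growth trigger, and for those the masses at $\tau_{k}$ do increase by $l$ each step. With this amendment your argument is complete and matches the paper's.
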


\begin{proof}
The argument of this proof is exactly the same as that in \cite[Theorem 2.6]{He2009}. Thus, we only sketch it here. Suppose that $\mathbf{Q}_{\mu}$ is a solution of the martingale problem (\ref{eq10}) and observe that $(I_{t \wedge \tau(\omega)}^{n}, t \in [0,T])$ is a $\mathbf{Q}_{\mu}$-martingale for each $n \geq 1$. Define the following sequence of stopping times, $\beta_{0} = 0$ and
\begin{eqnarray*}
\beta_{n+1} = \left(\inf\{t \geq \beta_{n}: |\langle \omega_{t}, \mathbf{1} \rangle - \langle \omega_{t-}, \mathbf{1} \rangle | \geq l \hspace*{3mm} \text{or} \hspace*{3mm} \langle \omega_{t}, \mathbf{1} \rangle - \langle \omega_{\beta_{n}}, \mathbf{1} \rangle  \geq l \} \right) \wedge (\beta_{n} +1).
\end{eqnarray*}

\noindent Notice that for each $n \geq 1$, $\beta_{n}$ is bounded by $nl$. By Lemma \ref{lemma3} and Theorem \ref{teo8}, we can prove by induction that $\mathbf{Q}_{\mu}$ is uniquely determined on $\mathcal{F}_{\beta_{n}}$ for all $n \geq 1$. Therefore, it is enough to check that $\mathbf{Q}_{\mu}(\beta_{n} \leq T) \rightarrow 0$ as $n \rightarrow \infty$ for each $T > 0,$ which follows along exactly the same lines as in \cite[Theorem 2.6]{He2009}.
\end{proof}

Finally, the previous result together with Proposition \ref{prop3} concludes the proof of Theorem \ref{teo4}.

\section{Proof of Theorem \ref{teo9}} \label{lastT}

In this section, we just check that no mass escapes for the unique solution to the martingale problem (\ref{eq10}). The proof follows exactly as in \cite[Theorem 3.1 and Theorem 3.2]{He2009}. Specifically, one first shows that the solution to the killed martingale problem (\ref{eq20}) is actually the law of a measure valued process in $\mathbb{D}([0,T], \mathcal{M}(\mathcal{X}))$. From this one builds a solution to the martingale problem (\ref{eq10}) that is the law of a measure valued process in $\mathbb{D}([0,T], \mathcal{M}(\mathcal{X}))$ and concludes by uniqueness.

Recall that we are assuming that

\begin{assumption} \label{assum4}
There exists a sequence $(\phi_{n})_{n \geq 1} \subset\mathscr {D}(\mathbf{M})$ such that
\begin{itemize}
\item[(i)] for $n \geq 1$, $\lim_{d(x, \partial) \rightarrow 0} \phi_{n}(x) = 1$, $\lim_{d(x, \partial) \rightarrow 0} A\phi_{n}(x)  = 0$ (where $d$ is some metric in the Polish space $\mathcal{X}$),
\item[(ii)] $\sup_{n \geq 1} \sup_{x \in \mathcal{X}} \phi_{n}(x) < \infty$ and $\sup_{n \geq 1} \sup_{x \in \mathcal{X}} A\phi_{n}(x) < \infty$. Furthermore, $ \phi_{n} \rightarrow \mathds{1}_{\{ \partial\}}$ and $ A\phi_{n} \rightarrow 0$, as $n \rightarrow \infty$, pointwise.
\end{itemize}
\end{assumption}

\begin{theorem}  \label{teo10}
Suppose that Assumptions \ref{assum3} and \ref{assum4} are satisfied with $\sigma \in C_{\partial}(\mathcal{X}, \mathbb{R}_{+})$.
 For a non random $\mu \in \mathcal{M}(\mathcal{X})$ let
 $\nu^{\prime} \in \mathbb{D}([0,T], \mathcal{M}(\hat{\mathcal{X}}))$, or equivalently its law $\mathbf{Q}^{\prime}_{\mu}$, solve the martingale problem (\ref{eq20}) for some $1<l<+ \infty.$  Then,
 \begin{eqnarray*}
 \mathbf{Q}_{\mu}^{\prime} ( \nu_{t}(\{\partial \}) = 0 \hspace*{3mm} \text{for all} \hspace*{3mm} t \in [0,T] ) = 1.
 \end{eqnarray*}
\end{theorem}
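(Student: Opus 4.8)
The plan is to probe the killed martingale problem (\ref{eq20}) with the cut-off functions $\phi_n\in\mathscr{D}(\mathbf{M})$ supplied by Assumption \ref{assum4}, which approximate $\mathds{1}_{\{\partial\}}$, and to pass to the limit $n\to\infty$ in two stages: first at the level of expectations, which already pins down $\mathbf{E}'_\mu[\nu'_t(\{\partial\})]$, and then at the level of suprema over $[0,T]$, which upgrades this to the almost-sure statement. Note first that continuity of the extension at $\partial$ together with Assumption \ref{assum4}(i) forces $\phi_n(\partial)=1$ and $(A_1\phi_n+A_2\phi_n)(\partial)=0$; since also $\phi_n\ge 0$ on $\mathcal{X}$, we get $\mathds{1}_{\{\partial\}}\le\phi_n$ everywhere on $\hat{\mathcal{X}}$, and by Assumption \ref{assum4}(ii) the convergences $\phi_n\to\mathds{1}_{\{\partial\}}$ and $A_1\phi_n+A_2\phi_n\to 0$ hold boundedly and pointwise on all of $\hat{\mathcal{X}}$.

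\emph{Stage 1.} Taking expectations in (\ref{eq20}) with $\phi=\phi_n$ — legitimate because Lemma \ref{lemma2} supplies uniform (in $s\le T$) bounds for $\mathbf{E}'_\mu[\langle\nu'_s,\mathbf{1}\rangle^k]$, $k=1,2$, making every term integrable — gives an identity for $\mathbf{E}'_\mu[\langle\nu'_t,\phi_n\rangle]$. Letting $n\to\infty$ and applying dominated convergence (with $\langle\nu'_s,\mathbf{1}\rangle$ and $1+\langle\nu'_s,\mathbf{1}\rangle^2$ as dominants, and $\mu(\{\partial\})=0$) makes the left side tend to $f(t):=\mathbf{E}'_\mu[\nu'_t(\{\partial\})]\ge 0$, collapses the spatial-motion term to $0$, and yields
\[
f(t)=-\mathbf{E}'_\mu\!\left[\int_0^t\!\Big(b(\partial)+\int_{[l,\infty)}\!u\,\Pi(\partial,{\rm d}u)-p(\partial)r(\partial)+\int_{\hat{\mathcal{X}}}\!c(\partial,y)\nu'_s({\rm d}y)\Big)\nu'_s(\{\partial\})\,{\rm d}s\right],
\]
where $\int_{[l,\infty)}u\,\Pi(\partial,{\rm d}u)<\infty$ since $u=u\wedge u^2$ for $u\ge l>1$ and $\sup_x\int(u\wedge u^2)\Pi(x,{\rm d}u)<\infty$ by Assumption \ref{assum3}(v). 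Dropping the nonnegative competition contribution gives $f(t)\le C_0\int_0^t f(s)\,{\rm d}s$ with $C_0=|b(\partial)+\int_{[l,\infty)}u\,\Pi(\partial,{\rm d}u)-p(\partial)r(\partial)|$, and Gronwall's lemma forces $f\equiv 0$. In particular $\mathbf{E}'_\mu[\int_0^T\nu'_s(\{\partial\})\,{\rm d}s]=0$, hence $\int_0^T\nu'_s(\{\partial\})\,{\rm d}s=0$ $\mathbf{Q}'_\mu$-almost surely.

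\emph{Stage 2.} Since $\mathds{1}_{\{\partial\}}\le\phi_n$, it suffices to show $\mathbf{E}'_\mu[\sup_{t\le T}\langle\nu'_t,\phi_n\rangle]\to 0$. From the pathwise form of (\ref{eq20}), $\sup_{t\le T}\langle\nu'_t,\phi_n\rangle$ is bounded by $\langle\mu,\phi_n\rangle$, the time-integrals of the two absolute drift densities, and $\sup_{t\le T}|M'_t(\phi_n)|$. The first three go to $0$ in expectation exactly as in Stage 1; in particular the drift piece carrying $b,\Pi,c,pr$ has $n\to\infty$ limit bounded in expectation by $\mathbf{E}'_\mu[\int_0^T C(1+\langle\nu'_s,\mathbf{1}\rangle)\nu'_s(\{\partial\})\,{\rm d}s]$, which vanishes by the a.s. identity just obtained. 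For the martingale part, Burkholder--Davis--Gundy reduces matters to the compensator of $[M'(\phi_n)]_T$, namely $\int_0^t\big(2\sigma(x)\phi_n^2(x)+\int_{(0,l)}u^2\phi_n^2(x)\Pi(x,{\rm d}u)\big)\nu'_s({\rm d}x)\,{\rm d}s$, which is dominated by $C\langle\nu'_s,\mathbf{1}\rangle$ (using $u^2\le l(u\wedge u^2)$ on $(0,l)$ and boundedness of $\sigma$) and converges as $n\to\infty$ to $\int_0^t\big(2\sigma(\partial)+\int_{(0,l)}u^2\Pi(\partial,{\rm d}u)\big)\nu'_s(\{\partial\})\,{\rm d}s=0$ a.s. Hence $\mathbf{E}'_\mu[\sup_{t\le T}|M'_t(\phi_n)|]\to 0$, so $\mathbf{E}'_\mu[\sup_{t\le T}\nu'_t(\{\partial\})]=0$, which gives the assertion. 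From here one would follow the scheme announced before Theorem \ref{teo9}: use this to construct a solution of (\ref{eq10}) living on $\mathcal{M}(\mathcal{X})$ along \cite[Theorem 3.2]{He2009} and close by uniqueness (Theorem \ref{teo4}).

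The delicate point, and the reason for splitting the argument, is an apparent circularity: the $n\to\infty$ limits of the competition drift and of the quadratic variation of $M'(\phi_n)$ are \emph{not} zero a priori — they are integrals of $\nu'_s(\{\partial\})$ itself against bounded coefficients — so the uniform bound cannot be read off directly. Obtaining $f\equiv 0$ first (which already yields $\int_0^T\nu'_s(\{\partial\})\,{\rm d}s=0$ a.s.) is precisely what unlocks Stage 2, and every interchange of limit, expectation and time-integral above rests on the polynomial moment estimates of Lemma \ref{lemma2}.
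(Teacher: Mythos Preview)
Your proof is correct and follows the same overall strategy as the paper: probe (\ref{eq20}) with the $\phi_n$, pass to the limit using the moment bounds of Lemma~\ref{lemma2}, and apply Gronwall to obtain $\mathbf{E}'_\mu[\nu'_t(\{\partial\})]=0$ for each $t$. The packaging differs only in how the upgrade to ``for all $t$ almost surely'' is carried out. The paper first shows that $(M'(\phi_n))_n$ is Cauchy in $\sup$-$L^2$ via Doob's inequality, obtaining (through \cite[Lemma 2.1.2]{Ike1981}) a c\`adl\`ag limit martingale $M^\partial$; the limiting identity $M^\partial_t=\nu'_t(\{\partial\})+\int_0^t(\cdots)\nu'_s(\{\partial\})\,{\rm d}s$ then makes $t\mapsto\nu'_t(\{\partial\})$ right continuous, and the conclusion follows at once. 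You instead run a two-stage argument: Stage~1 yields $\int_0^T\nu'_s(\{\partial\})\,{\rm d}s=0$ a.s., which you then feed back into a BDG bound on $\sup_t|M'_t(\phi_n)|$ to control $\sup_t\langle\nu'_t,\phi_n\rangle$ directly. The paper's route is slightly shorter because constructing $M^\partial$ once sidesteps the apparent circularity you identify and resolve by staging; your route is a bit more hands-on but avoids the limit-martingale construction and the external lemma.
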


\begin{proof}
Let $\phi \in \mathscr{D}(\mathbf{M})$. By Lemma \ref{lemma2}, the process $M^{\prime}(\phi) = (M_{t}^{\prime}(\phi), t \in [0,T])$ given by
\begin{align}  \label{exeq6}
M_{t}^{\prime}(\phi) & =  \langle \nu_{t}^{\prime}, \phi \rangle - \langle \nu_{0}^{\prime}, \phi \rangle - \int_{0}^{t} \int_{\hat{\mathcal{X}}}  p(x) A \phi(x)  \nu_{s}^{\prime}({\rm d} x) {\rm d} s \nonumber \\
& \hspace*{5mm}  - \int_{0}^{t} \int_{\hat{\mathcal{X}}}  \phi(x) 
\left( b(x)  - \int_{[l, \infty)} u \Pi(x, {\rm d} u) - \int_{\hat{\mathcal{X}}} c(x,y) \nu_{s}^{\prime}({\rm d}y)  \right) \nu_{s}^{\prime}({\rm d}x){\rm d} s
\end{align}

\noindent is a square-integrable $\mathbf{Q}_{\mu}^{\prime}$-martingale with quadratic variation process given by
\begin{eqnarray*}
\langle M^{\prime}(\phi) \rangle_{t }=  2 \int_{0}^{t} \int_{\hat{\mathcal{X}}} \sigma(x)  \phi^{2}(x) \nu_{s}^{\prime}({\rm d}x) {\rm d}s + \int_{0}^{t} \int_{\hat{\mathcal{X}}} \int_{(0,l)} u^{2} \phi^{2}(x) \Pi(x, {\rm d}u) \nu_{s}^{\prime}({\rm d}x) {\rm d}s, \hspace*{5mm} t \in [0,T],
\end{eqnarray*}

Let $(\phi_{n})_{n \geq 1} \subset\mathscr {D}(\mathbf{M})$ be a sequence that fulfills Assumption \ref{assum4}. Then, using Doob's inequality, we obtain
\begin{align*}
& \mathbf{E}_{\mu}^{\prime} \left[ \sup_{t \in [0,T]} \left| M_{t}^{\prime}(\phi_{i}) - M_{t}^{\prime}(\phi_{j}) \right|^{2}  \right] \leq 4 \int_{0}^{T} \mathbf{E}_{\mu}^{\prime} \left[  \int_{\hat{\mathcal{X}}}   (\phi^{2}_{i}(x) - \phi^{2}_{j}(x)) \left( 2\sigma(x) + \int_{(0,l)} u^{2}  \Pi(x, {\rm d}u) \right)  \nu_{s}^{\prime}({\rm d}x) \right] {\rm d}s ,
\end{align*}

\noindent for $i,j \geq 1$ Therefore, Assumptions \ref{assum3}, \ref{assum4} and the moment bound
$\sup_{t \in [0,T]} \mathbf{E}^{\prime}_{\mu}[\langle \nu_{t}^{\prime}, \mathbf{1} \rangle^{n}] < \infty, n \geq 1$ (from Lemma  \ref{lemma2}) together with the Dominated Convergence Theorem imply that
\begin{eqnarray*}
\lim_{i,j \rightarrow \infty} \mathbf{E}_{\mu}^{\prime} \left[ \sup_{t \in [0,T]} \left| M_{t}^{\prime}(\phi_{i}) - M_{t}^{\prime}(\phi_{j}) \right|^{2}  \right] = 0.
\end{eqnarray*}

\noindent So $M_{t}^{\prime}(\phi_{n})$ converges uniformly on $[0,T]$  in mean square as $n \rightarrow \infty$ to a limit that we denote by $M^{\partial} = (M_{t}^{\partial}, t \in [0,T]) $ and with probability one along an appropriate subsequence.
By \cite[Lemma 2.1.2]{Ike1981} we obtain that $M^{\partial}$ is a c\`adl\`ag square-integrable
martingale with $\mathbf{E}_{\mu}^{\prime}  [M^{\partial}_t]= \lim_{n \rightarrow \infty} \mathbf{E}_{\mu}^{\prime}  [M_{t}^{\prime}(\phi_{n})]= \lim_{n \rightarrow \infty} \mathbf{E}_{\mu}^{\prime}  [M_{0}^{\prime}(\phi_{n})]=0.$ Then from (\ref{exeq6}) we deduce by Lebesgue's convergence theorem that
\begin{align*}
M_{t}^{\partial} & =  \nu_{t}^{\prime}(\{ \partial\})    - \int_{0}^{t}  
 \left( b(\partial)  - \int_{[l, \infty)} u \Pi(\partial, {\rm d} u) - \int_{\hat{\mathcal{X}}} c(\partial,y) \nu_{s}^{\prime}({\rm d}y) \right) \nu_{s}^{\prime}(\{ \partial\}){\rm d} s.
\end{align*}

\noindent Taking expectations in the last equality and using Gronwall's inequality (with Assumption \ref{assum3}) yields
$\mathbf{E}_{\mu}^{\prime} [\nu_{t}^{\prime}(\{ \partial\})  ]  =0$, for all $t \in [0,T]$. Hence $\nu_{t}^{\prime}(\{ \partial\})   = 0$ with probability one, for all  $t \in [0,T]$ and now the conclusion follows from the right continuity of  $(\nu_{t}^{\prime}(\{ \partial\}), t \in [0,T])$.
\end{proof}

\begin{proof}[Proof of Theorem \ref{teo9}]

By Proposition \ref{prop3} and Theorem \ref{teo10}, there is an unique solution to the martingale problem (\ref{eq20}). Then one may follow exactly the same argument as in the proof of \cite[Theorem 3.2]{He2009} to conclude that there is a process $\nu^{\infty} \in \mathbb{D}([0,T], \mathcal{M}(\mathcal{X}))$ that is a solution to the martingale problem (\ref{eq10}). Therefore our claim is a consequence of Theorem \ref{teo4}. It is important to point out that in order to be in the framework of \cite[Theorem 3.2]{He2009} one needs to put the martingale problems (\ref{eq10}) and (\ref{eq20}) in the form of  Definition \ref{def2} by Corollary \ref{cor1} (recall also Remark \ref{Rem2}).
\end{proof}

\paragraph{Acknowledgements.}
This work was supported by the DFG-SPP Priority Programme 1590, {\sl Probabilistic Structures in Evolution}. 

\providecommand{\bysame}{\leavevmode\hbox to3em{\hrulefill}\thinspace}
\providecommand{\MR}{\relax\ifhmode\unskip\space\fi MR }
\providecommand{\MRhref}[2]{%
  \href{http://www.ams.org/mathscinet-getitem?mr=#1}{#2}
}
\providecommand{\href}[2]{#2}


\end{document}